 \def\AMSTeXfeatures{\Plainheads 
   \let\current@vert=\AMS@vert}
 \def\Plainheads{\sh@ftdiam=0.05em
   \getlabeldims
   \let\vshaftfill=\plnvsolidfill
   \let\hshaftfill=\plnhsolidfill
   \let\th@rhead=\plnrhead
   \let\th@lhead=\plnlhead
   \let\th@dnhead=\plndnhead
   \let\th@uphead=\plnuphead}
 \def\glet{\global\let}
 \def\LaTeXfeatures{\catcode`\@=11
   \ifx\@clnwd\undefined \nol@g
      \input ltxcode.tex \dol@g \fi
   \ltxheads \let\current@vert=\new@vert
   \providelto \catcode`\@=\active}
 \def\nol@g{\def\wlog{\edef\garbage}}
 \def\dol@g{\let\wlog=\wl@g} \let\wl@g=\wlog
 \newbox\ltobox
 \def\providelto{{\setbox\z@=
   \hbox{$\to$}\minharrlen=\wd\z@
   \global\setbox\ltobox=\hbox{$\activeat>>>$}}
   \def\lto{\mathrel{\copy\ltobox}}}
 \def\ltxheads{\sh@ftdiam=\@wholewidth
   \getlabeldims
   \let\vshaftfill= \ltxvsolidfill
   \let\hshaftfill=\ltxhsolidfill
   \let\th@rhead=\ltxrhead
   \let\th@lhead=\ltxlhead
   \let\th@dnhead=\ltxdnhead
   \let\th@uphead=\ltxuphead}
 {\catcode`\@=\active
   \gdef@#1{\csname #1\string@at\endcsname}
   \glet\activeat=@}
 \def\def@#1{\expandafter\def\csname #1@at\endcsname}
\varrow\vequalfill{}{}\fi}
  \def\AMS@vert{\varrow\vequalfill{}{}}
  \def\new@vert#1|#2|{\ifodd\row
   \let\nextarrow\vertexvarrow
   \else\let\nextarrow\varrow\fi
   \nextarrow\vshaftfill{#1}{#2}}
\let\next\AMSatdash \fi \next}
  \def\hl@ne#1-#2-{\harrow\hshaftfill{#1}{#2}}
  \def\AMSatdash{\let\next\relax\leavevmode
    \def\next@{\ifx\next-%
      \def\next-{\futurelet\next\nextii@}%
     \else\def\next{\hbox{-}}\fi\next}%
    \def\nextii@{\ifx\next-\def\next-{\hbox{---}}%
      \else\def\next{\hbox{--}}\fi\next}%
    \futurelet\next\next@}
 \def\fiberbox{\hbox{$\vcenter{\hr@le\hbox{\vr@le
   \kern1ex\vbox{\kern1.2ex}\vr@le}\hr@le}$}}
  \def\hr@le{\hrule height \sh@ftdiam}
  \def\vr@le{\vrule width \sh@ftdiam}
   \else \varrow{#1}{#2}{#3}\fi}
 \def\Dnarrfill{\vequalfill\Dnhe@d}
 \def\Uparrfill{\Uphe@d\vequalfill}
 \def\ontofill{\rtarrfill\kern-0.3em 
   \th@rhead\kern 0.3em} 
 \def\rtarrfill{\hshaftfill\th@rhead}
 \def\ltarrfill{\th@lhead\hshaftfill}
 \def\dnarrfill{\vshaftfill\th@dnhead}
 \def\uparrfill{\th@uphead\vshaftfill}
 \def\hequalfill{\plnhfill=}
 \def\deffill{:\plnhfill=}
 \def\plnvextfill#1{\setbox\z@
   \hbox{\the\textfont3 #1}%
   \dimen@=\dp\z@\advance\dimen@\ht\z@
   \copy\z@ \kern-\dimen@ 
   \cleaders\copy\z@ \vfill
   \kern-\dimen@ 
   \box\z@}
 \def\plnhfill#1{$\m@th\mkern-1.5mu\mathord#1\mkern-6mu
    \cleaders\hbox{$\mkern-2mu\mathord#1\mkern-2mu$}\hfill
    \mkern-6mu\mathord#1\mkern-1.5mu$}
 \def\vequalfill{\plnvextfill{\char'167}}
 \def\plnvsolidfill{\plnvextfill{\char'077}}
 \def\plnhsolidfill{\plnhfill-}
 \def\ltxhsolidfill{\leaders\hrule height\topofshaft depth\botofshaft
   \hfill}
 \def\ltxvsolidfill{\leaders\vrule width\sh@ftdiam\vfill}
 \def\hdashfill{\hd@sh\wd@sh
   \xleaders \hbox{\wd@sh\hd@sh\wd@sh}\hfill
   \wd@sh\hd@sh}
 \def\vdashfill{\vd@sh\wd@sh
   \xleaders \vbox{\wd@sh\vd@sh\wd@sh}\vfill
   \wd@sh\vd@sh}
 \def\dashed{\ifinmeasureCD\else
    \ifodd\row\option{\let\hshaftfill=\hdashfill}%
   \else\option{\let\vshaftfill=\vdashfill}\fi\fi}
 \newdimen\CDstrutht  \newdimen\CDstrutdp
 \newdimen\CDstrutlen \CDstrutlen=\CDstrutht
 \def\CDstrut{\vrule
   height \ifnum\row=1 \z@\else\CDstrutht \fi
   depth \ifnum\row=\numrows \z@ \else\CDstrutdp \fi
   width\z@}
 \newdimen\CDarrsurr \CDarrsurr=0.375em
 \newdimen\CDdashlen
 \newdimen\CDvarrlen \CDvarrlen=1.5\baselineskip
 \newdimen\minharrlen 
\z@\hbox{$\longrightarrow$} \minharrlen=\wd\z@
 \newdimen\minCDharrlen \minCDharrlen=2.5em 
\newdimen \minc@lwd
\def\findminc@lwd{\minc@lwd=2\CDarrsurr
  \advance\minc@lwd\minCDharrlen}
 \newdimen\sh@ftdiam
 \newdimen\labelsurr \labelsurr=1.25 em
\newdimen\@rrwd \newdimen\@rrdp
 \def\adjustbot#1{\option{\advance\@rrdp#1\relax}}
\def\pushvertex#1{\global\p@shlen#1\relax
   \global\let\maybepush=\dopush}
 \newdimen\p@shlen \p@shlen=\z@
 \let\maybepush=\relax
 \def\dopush{\ifinmeasureCD 
   \advance\locdimen by -\p@shlen 
   \else\advance \@rrwd by -\p@shlen \fi 
   \global\let\maybepush=\relax \global\p@shlen=\z@\relax}
 \def\span@ne{\global\sp@ncnt=\@ne\relax}
 \def\setsp@n#1#2{\global\sp@ncnt=#1\relax
   \ifx\relax#2\relax\else\global\sp@ncnt@=#2\relax\fi}
 \def\plnrhead{\llap{$\rightarrow\mkern-1.5mu$}}
 \def\plnlhead{\rlap{$\mkern-1.5mu\leftarrow$}}
 \def\clap#1{\hbox to \z@{\hss #1\hss}}
 \def\plndnhead{\hbox{\the\textfont3 \char'171}}
 \def\plnuphead{\hbox{\the\textfont3 \char'170}}
 \def\Dnhe@d{\hbox{\the\textfont3 \char'177}}
 \def\Uphe@d{\hbox{\the\textfont3 \char'176}}
 \def\ltxrhead{\raise\@xisheight
   \llap{\smash{\@linefnt\@getrarrow(1,0)}}}
 \def\ltxlhead{\raise\@xisheight
   \rlap{\@linefnt\@getlarrow(-1,0)}}
 \def\ltxuphead{\setbox\z@=\rlap{%
   \kern\@halfwidth\@linefnt\char'66}%
   \copy\z@\kern-\ht\z@}
 \def\ltxdnhead{\setbox\z@=\rlap{%
   \kern\@halfwidth\@linefnt\char'77}%
   \ht\z@=\z@\box\z@}
 \def\wd@sh{\kern0.5\CDdashlen}
 \def\hd@sh{\vrule height\topofshaft depth\botofshaft
    width\CDdashlen}
 \def\vd@sh{\hrule height\CDdashlen
   depth\z@ width\sh@ftdiam}
\def\xylist{14{3434}13{2414}12{1723}%
  23{1413}34{1153}11{0867}43{0707}%
  32{0580}21{0414}31{0291}41{0}}
\def\find@xyargs{\dimen@=\@rrdp
  \advance\dimen@ by \CDstrutlen
  \tgtcnt@=\dimen@ \dimen@=\@rrwd 
  \divide\dimen@ by \@m 
  \divide \tgtcnt@ by \dimen@ 
  \expandafter\testxy\xylist\relax
  \unitlength=\@xarg\@rrdp
  \divide\unitlength by\@yarg\relax}
\def\testxy#1#2#3{\ifnum\tgtcnt@>#3
    \@xarg=#1\relax \@yarg=#2\relax
    \let\next=\ignorerest
  \else\let\next\testxy\fi\next}
\def\ignorerest#1\relax{\relax}
\let\scalefactor=\@ne
\def\SWarrow{\find@xyargs\vector
  (-\@xarg,-\@yarg)\scalefactor\hskip-\wd\@linechar}
\def\NWarrow{\find@xyargs\vector
  (-\@xarg,\@yarg)\scalefactor\hskip-\wd\@linechar}
\def\NEarrow{\find@xyargs\vector
  (\@xarg,\@yarg)\scalefactor}
\def\SEarrow{\find@xyargs\vector
  (\@xarg,-\@yarg)\scalefactor}
\def\rightupline{\find@xyargs\@linelen=\scalefactor
     \unitlength\@sline}
\def\rightdownline{\find@xyargs\@yarg=-\@yarg\relax
     \@linelen=\scalefactor\unitlength\@sline}
\def\Sim{\ifodd\row\setbox\z@=\hbox{$\sim$}\dimen@=\ht\z@
 \advance\dimen@ by -\@xisheight
  \vbox{\box\z@\kern-\@xisheight\kern\dimen@}%
  \else\hbox{$\wr$}\fi}
\def\harrow#1#2#3{\inmeasureCDtrue\findminarrwd
  {#2}{#3}{\sp@ncnt\minharrlen}\inmeasureCDfalse\span@ne
  \mathrel{\hbox{\options\hplace{#1}\ulabel{#2}\dlabel{#3}}}}
\def\noharrow{\harrow\hfill{}{}}
\def\vertexvarrow#1#2#3{\findarrdp \@rrwd=\z@ \setsp@n\@ne\@ne
  \vbox to \z@{\kern-1.2\CDstrutht
  \rlap{\options\vplace{#1}\llabel{#2}\rlabel{#3}}\vss}}
\newif\ifinmeasureCD
\def\measurelabel#1{\setbox\z@
  \hbox{$\scriptstyle#1\kern\labelsurr$}%
  \ifdim\wd\z@>\@rrwd \@rrwd=\wd\z@\fi}
\def\findminarrwd#1#2#3{\@rrwd=#3\relax
   \measurelabel{#1}\measurelabel{#2}}
\def\findCDarrwd#1#2{\@rrwd=\minCDharrlen
   \measurelabel{#1}\measurelabel{#2}%
  }
\newdimen\vrtxhalfwd  \newbox\tempbox
\def\DANABUG{\advance\col by \@ne
 \@rrwd=\minCDharrlen
  \advance\@rrwd by \vrtxhalfwd
  \advance\@rrwd by \CDarrsurr
  \ifnum\col>\numcols \numcols=\col
     \newlocdimen{col\the\col}\locdimen=\@rrwd 
  \else \ifdim\@rrwd>\c@l \c@l=\@rrwd\fi\fi}
\def\drop#1\\{
  \findvrtxhalfsum\DANABUG\advance\row by 2 \measureinit}
\def\measureinit{\col=\@ne \vrtxhalfwd=-\CDarrsurr\arrspan=\@ne\@rrwd=\z@
   \setbox\tempbox=\hbox\bgroup$}
\def\measure{
  \let\harrow\measureCDarrow
  \let\CDCR=\measureCR 
   \findminc@lwd 
  \inmeasureCDtrue
  \row=\@ne \numcols=\z@ \measureinit}
\def\endmeasure{\findvrtxhalfsum\DANABUG
  \numrows=\row 
  \inmeasureCDfalse}
\def\newlocdimen#1{\advance\dimenc@unt by \@ne
  \ifnum\dimenc@unt<\insc@unt
     \else\errmessage{No room for the CD}\fi
  \dimendef\locdimen=\dimenc@unt
  \expandafter\dimendef\csname#1\endcsname=\dimenc@unt}
 \def\r@wc@l{\csname row\the\row col\the\col\endcsname}
 \def\c@l{\csname col\the\col\endcsname}
 \def\findvrtxhalfsum{$\egroup
  \newlocdimen{row\the\row col\the\col}
  \locdimen=\vrtxhalfwd 
  \vrtxhalfwd=0.5\wd\tempbox 
  \advance\vrtxhalfwd by \CDarrsurr
  \advance\locdimen by \vrtxhalfwd 
  \advance\@rrwd by \locdimen 
  \maybepush
  \divide\@rrwd by \arrspan\relax
  \ifdim\@rrwd<\minc@lwd
    \ifnum\col>\@ne \@rrwd=\minc@lwd\fi \fi
  \loop 
    \ifnum\col>\numcols \numcols=\col
       \newlocdimen{col\the\col}
       \locdimen=\@rrwd 
    \else \ifdim\@rrwd>\c@l \c@l=\@rrwd\fi \fi
   \ifnum\arrspan>\@ne
      \advance\arrspan by -1 \advance\col by \@ne
  \repeat }
 \def\measureCDarrow#1#2#3{\findvrtxhalfsum
   \arrspan=\sp@ncnt\relax\global\sp@ncnt=1\relax
   \advance\col by \@ne
   \findCDarrwd{#2}{#3}%
   \setbox\tempbox=\hbox\bgroup$}
 \def\locate#1:#2{\ifinmeasureCD\else
   \count@=-#1
   \multiply\count@ by 2
   \advance\count@ by #2
   \dimen@=\count@\@rrwd
   \ifnum\dr@tn=\@ne\relax \else\dimen@=-\dimen@ \fi
   \dimen@i=\@rrdp
   \ifnum\dr@tn>\z@\advance\dimen@i by \CDstrutlen \fi
   \dimen@i=\count@\dimen@i
   \count@=#2 \multiply\count@ by 2
   \divide\dimen@ by \count@
   \divide\dimen@i by \count@
   \lift\dimen@i\nudge\dimen@\fi}
\def\betweenCDrows{\advance\row by \@ne \col=\@ne
\options}
\def\hbegin{\hbox\bgroup\kern\c@l \kern-\r@wc@l$}
\def\hend{$\glet\maybepush\relax \CDstrut\egroup}
\def\vbegin{\setbox\tempbox=\hbox\bgroup$}
\def\vend{$\egroup\ht\tempbox=\z@\dp\tempbox\CDvarrlen
  \box\tempbox}
\def\setCD{\let\harrow=\setCDarrow
  \let\CDCR=\setCR 
  \row=\@ne \col=\@ne \hbegin}
\let\endsetCD=\hend 
\def\findarrwd{\@rrwd=\z@ \count@=\col \advance\count@ by\sp@ncnt
  \loop\ifnum\count@>\col \advance\count@ by -1
      \advance\@rrwd by\csname col\the\count@\endcsname\repeat}
\def\setCDarrow#1#2#3{\kern\CDarrsurr\advance\col by \@ne
  \findarrwd \advance\@rrwd by -\r@wc@l  
  \@rrdp=\z@ 
  \maybepush
  \advance\col by -\@ne \advance\col by \sp@ncnt \span@ne
  \hbox to \@rrwd{\options
   \@rrwd=\scalefactor\@rrwd\hss
   \hplace{#1}\ulabel{#2}\dlabel{#3}\hss}%
   \kern\CDarrsurr}
\newdimen\labspacei 
\newdimen\labspaceii 
\newdimen\@xisheight
\newdimen\labelskip
\newdimen\topofshaft
\newdimen\botofshaft
\newdimen\botofulabel
\newdimen\topofdlabel
\def\getlabeldims{
  \topofshaft=0.5\sh@ftdiam
  \botofshaft=\topofshaft
  \advance\topofshaft by \@xisheight  
  \advance\botofshaft by -\@xisheight  
  \botofulabel=\topofshaft
  \advance\botofulabel by \labelskip
  \topofdlabel=\botofshaft
  \advance\topofdlabel by \labelskip}
\def\ulabel{\ifnum\row=\@ne\let\next\ulabeli
   \else\let\next\ulabellap\fi\next}
\def\ulabeli#1{\vbox{
  \clap{\kern-\@rrwd$\scriptstyle#1$}%
  \kern\botofulabel}\maybeoffset}
\def\ulabellap#1{\vbox to \z@{\vss
  \clap{\kern-\@rrwd$\scriptstyle#1$}%
  \kern\botofulabel}\maybeoffset}
\def\dlabel{\ifnum\row=\numrows\let\next\dlabeli
   \else\let\next\dlabellap\fi\next}
\def\dlabeli#1{\vtop{\kern\topofdlabel
  \clap{\kern-\@rrwd$\scriptstyle#1$}%
  }\maybeoffset}
\def\dlabellap#1{\vbox to \z@{\kern\topofdlabel
  \clap{\kern-\@rrwd$\scriptstyle#1$}%
  \vss}\maybeoffset}
\def\rlabel#1{\vbox to \z@{\vss
  \rlap{\kern\labelskip$\scriptstyle#1$}%
  \vss\kern-\@rrdp}\maybeoffset}
\def\llabel#1{\vbox to \z@{\vss
  \llap{$\scriptstyle#1$\kern\labelskip}%
  \vss\kern-\@rrdp}\maybeoffset}
\def\swlabel#1{\vtop{\kern0.5\@rrdp
  \llap{$\scriptstyle#1$\kern\labelskip\kern-0.5\@rrwd}
  }\maybeoffset}
\def\nwlabel#1{\vbox{
  \llap{$\scriptstyle#1$\kern\labelskip\kern-0.5\@rrwd}%
  \kern-0.5\@rrdp}\maybeoffset}
\def\selabel#1{\vtop{\kern0.5\@rrdp
  \rlap{\kern0.5\@rrwd\kern\labelskip$\scriptstyle#1$}%
  }\maybeoffset}
\def\nelabel#1{\vbox{
  \rlap{\kern0.5\@rrwd\kern\labelskip$\scriptstyle#1$}%
  \kern-0.5\@rrdp}\maybeoffset}
\def\cplace#1{\vbox to \z@{\vss
  \clap{$#1$\kern-\@rrwd}%
  \kern-\@rrdp\vss}\maybeoffset}
\def\hplace#1{\hbox to \@rrwd{#1}\maybeoffset}
\def\vplace#1{\clap{\vbox to \z@{#1\kern-\@rrdp}}\maybeoffset}
\newdimen\nudgeamount \nudgeamount=\z@
\newdimen\liftamount \liftamount=\z@
\let\maybeoffset\relax
\newbox\offsetbox \newdimen\lastheight
\def\dooffset{
  \setbox\offsetbox=\lastbox \lastheight=\ht\offsetbox 
  \setbox\offsetbox=\vbox{\kern-\liftamount\box\offsetbox}%
  \ht\offsetbox=\lastheight
  \kern\nudgeamount\box\offsetbox\kern-\nudgeamount
  \global\nudgeamount=\z@ \global\liftamount=\z@
  \glet\maybeoffset=\relax}
\def\nudge#1{\ifinmeasureCD\else
  \global\advance\nudgeamount#1\relax
  \global\let\maybeoffset\dooffset\fi}
\def\lift#1{\ifinmeasureCD\else
  \global\advance\liftamount#1\relax
  \global\let\maybeoffset\dooffset\fi}
\def\findarrdp{\@rrdp=\CDvarrlen
  \ifnum\sp@ncnt@>1
    \advance\@rrdp by \CDstrutlen
    \multiply\@rrdp by \sp@ncnt@
    \advance\@rrdp by -\CDstrutlen \fi
 }
\def\varrow#1#2#3{\ifnum\sp@ncnt>\@ne 
     \sp@ncnt@=\sp@ncnt\relax\fi
  \findarrdp \@rrwd=\z@ 
  \kern\c@l
   \hbox to \z@{\options
   \@rrdp=\scalefactor\@rrdp
    \hss\vplace{#1}\llabel{#2}\rlabel{#3}\hss}%
  \global\advance\col by \@ne \setsp@n\@ne\@ne
  }
\def\novarrow{\varrow\vfill{}{}}
\def\tweenarrows#1{\findarrwd \findarrdp \setsp@n\@ne\@ne
  \rlap{\options\cplace{#1}}}
\def\usarrow #1#2#3{\dr@tn=\@ne
  \findarrwd \findarrdp \setsp@n\@ne\@ne 
  \rlap{\options\cplace{#1}\nwlabel{#2}\selabel{#3}}%
  \dr@tn=\z@}
\def\dsarrow #1#2#3{\dr@tn=\tw@
  \findarrwd \findarrdp \setsp@n\@ne\@ne 
  \rlap{\options\cplace{#1}\swlabel{#2}\nelabel{#3}}%
  \dr@tn=\z@}
 \def\@rrow#1{\csname #1@rrow\endcsname}
 \def\R@rrow{\harrow \rtarrfill}
 \def\L@rrow{\harrow \ltarrfill}
 \def\V@rrow{\varrow \dnarrfill}
 \def\A@rrow{\varrow \uparrfill}
 \def\SE@rrow{\dsarrow \SEarrow}
 \def\NW@rrow{\dsarrow \NWarrow}
 \def\SW@rrow{\usarrow \SWarrow}
 \def\NE@rrow{\usarrow \NEarrow}
 \def\DS@rrow{\dsarrow \dnslope}
 \def\US@rrow{\usarrow \upslope}
 \def\upslope{\find@xyargs
       \@linelen=\unitlength\@sline}
 \def\dnslope{\find@xyargs\@yarg=-\@yarg\relax
       \@linelen=\unitlength\@sline}
\newtoks\optionlist 
\let\options\relax
\def\dooptions{\the\optionlist\global\optionlist={}%
  \glet\options=\relax}
\def\option#1{\ifinmeasureCD\else
  \glet\options=\dooptions
  \global\optionlist=\expandafter{\the\optionlist\relax#1}\fi}
\def\wider#1{\ifinmeasureCD\else
   \option{\advance\@rrwd by #1}\fi}
\def\deeper#1{\ifinmeasureCD\else
   \option{\advance\@rrdp by #1}\fi}
\def\\{\global\let\sptoken= }\\ }
\def\CR{\futurelet\nexttok\testCR}
\def\testCR{\ifx\nexttok\sptoken
   \let\next\eatspaceCR\else\let\next\CDCR\fi\next}
\def\eatspaceCR#1 {\CR}
\def\measureCR{\ifx\nexttok\endmeasure\let\nextCR\relax
    \else\let\nextCR\drop\fi\nextCR}
\def\setCR{\ifodd\row
  \ifx\nexttok\endsetCD\else\hend\betweenCDrows\vbegin\fi
  \else\vend\betweenCDrows\hbegin\fi}
\def\CD#1\endCD{
   \begingroup\let\\=\CR
  \m@th\offinterlineskip
   \measure#1\endmeasure\null\,\vcenter{\setCD#1\endsetCD}\,
   \endgroup
    }
 \font\@linefnt=line10 
 \newdimen\@tempdima
 \newdimen\@tempdimb
 \newdimen\@wholewidth
 \newdimen\@halfwidth
 \newdimen\unitlength
 \newbox\@linechar
 \newdimen\@linelen
 \newdimen\@clnwd
 \newdimen\@clnht
 \newif\if@negarg
 \def\@whilenoop#1{}
 \def\@whiledim#1\do #2{\ifdim #1\relax#2\@iwhiledim{#1\relax#2}\fi}
 \def\@iwhiledim#1{\ifdim #1\let\@nextwhile=\@iwhiledim 
         \else\let\@nextwhile=\@whilenoop\fi\@nextwhile{#1}}
 \def\@sline{\ifnum\@xarg< 0 \@negargtrue \@xarg -\@xarg \@yyarg -\@yarg
   \else \@negargfalse \@yyarg \@yarg \fi
 \ifnum \@yyarg >0 \@tempcnta\@yyarg \else \@tempcnta -\@yyarg \fi
 \ifnum\@tempcnta>6 \@badlinearg\@tempcnta0 \fi
 \ifnum\@xarg>6 \@badlinearg\@xarg 1 \fi
 \setbox\@linechar\hbox{\@linefnt\@getlinechar(\@xarg,\@yyarg)}%
 \ifnum \@yarg >0 \let\@upordown\raise \@clnht\z@
    \else\let\@upordown\lower \@clnht \ht\@linechar\fi
 \@clnwd=\wd\@linechar
 \if@negarg \hskip -\wd\@linechar \def\@tempa{\hskip -2\wd\@linechar}\else
      \let\@tempa\relax \fi
 \@whiledim \@clnwd <\@linelen \do
   {\@upordown\@clnht\copy\@linechar
    \@tempa
    \advance\@clnht \ht\@linechar
    \advance\@clnwd \wd\@linechar}%
 \advance\@clnht -\ht\@linechar
 \advance\@clnwd -\wd\@linechar
 \@tempdima\@linelen\advance\@tempdima -\@clnwd
 \@tempdimb\@tempdima\advance\@tempdimb -\wd\@linechar
 \if@negarg \hskip -\@tempdimb \else \hskip \@tempdimb \fi
 \multiply\@tempdima \@m
 \@tempcnta \@tempdima \@tempdima \wd\@linechar \divide\@tempcnta \@tempdima
 \@tempdima \ht\@linechar \multiply\@tempdima \@tempcnta
 \divide\@tempdima \@m
 \advance\@clnht \@tempdima
 \ifdim \@linelen <\wd\@linechar
    \hskip \wd\@linechar
   \else\@upordown\@clnht\copy\@linechar\fi}
 \def\@getlinechar(#1,#2){\@tempcnta#1\relax\multiply\@tempcnta 8
 \advance\@tempcnta -9 \ifnum #2>0 \advance\@tempcnta #2\relax\else
 \advance\@tempcnta -#2\relax\advance\@tempcnta 64 \fi
 \char\@tempcnta}
 \def\vector(#1,#2)#3{\@xarg #1\relax \@yarg #2\relax
 \@tempcnta \ifnum\@xarg<0 -\@xarg\else\@xarg\fi
 \ifnum\@tempcnta<5\relax
 \@linelen=#3\unitlength
 \ifnum\@xarg =0 \@vvector 
   \else \ifnum\@yarg =0 \@hvector \else \@svector\fi
 \fi
 \else\@badlinearg\fi}
 \def\@svector{\@sline
 \@tempcnta\@yarg \ifnum\@tempcnta <0 \@tempcnta=-\@tempcnta\fi
 \ifnum\@tempcnta <5
   \hskip -\wd\@linechar
   \@upordown\@clnht \hbox{\@linefnt  \if@negarg 
   \@getlarrow(\@xarg,\@yyarg) \else \@getrarrow(\@xarg,\@yyarg) \fi}%
 \else\@badlinearg\fi}
 \def\@getlarrow(#1,#2){\ifnum #2 =\z@ \@tempcnta='33\else
 \@tempcnta=#1\relax\multiply\@tempcnta \sixt@@n \advance\@tempcnta
 -9 \@tempcntb=#2\relax\multiply\@tempcntb \tw@
 \ifnum \@tempcntb >0 \advance\@tempcnta \@tempcntb\relax
 \else\advance\@tempcnta -\@tempcntb\advance\@tempcnta 64
 \fi\fi\char\@tempcnta}
 \def\@getrarrow(#1,#2){\@tempcntb=#2\relax
 \ifnum\@tempcntb < 0 \@tempcntb=-\@tempcntb\relax\fi
 \ifcase \@tempcntb\relax \@tempcnta='55 \or 
 \ifnum #1<3 \@tempcnta=#1\relax\multiply\@tempcnta
 24 \advance\@tempcnta -6 \else \ifnum #1=3 \@tempcnta=49
 \else\@tempcnta=58 \fi\fi\or 
 \ifnum #1<3 \@tempcnta=#1\relax\multiply\@tempcnta
 24 \advance\@tempcnta -3 \else \@tempcnta=51\fi\or 
 \@tempcnta=#1\relax\multiply\@tempcnta
 \sixt@@n \advance\@tempcnta -\tw@ \else
 \@tempcnta=#1\relax\multiply\@tempcnta
 \sixt@@n \advance\@tempcnta 7 \fi\ifnum #2<0 \advance\@tempcnta 64 \fi
 \char\@tempcnta}
\begin{document}

\def\al{Val}
\numberwithin{equation}{section}

\newtheorem{theorem}{Theorem}[section]
\newtheorem{lemma}[theorem]{Lemma}

\newtheorem*{theorema}{Theorem A}
\newtheorem*{theorema1}{Theorem A${}^\prime$}
\newtheorem*{theoremb}{Theorem B}
\newtheorem*{corc}{Corollary C}

\newtheorem{prop}[theorem]{Proposition}
\newtheorem{proposition}[theorem]{Proposition}
\newtheorem{corollary}[theorem]{Corollary}
\newtheorem{corol}[theorem]{Corollary}
\newtheorem{conj}[theorem]{Conjecture}
\newtheorem{sublemma}[theorem]{Sublemma}
\newtheorem{quest}[theorem]{Question}

\theoremstyle{definition}
\newtheorem{defn}[theorem]{Definition}
\newtheorem{example}[theorem]{Example}
\newtheorem{examples}[theorem]{Examples}
\newtheorem{remarks}[theorem]{Remarks}
\newtheorem{remark}[theorem]{Remark}
\newtheorem{algorithm}[theorem]{Algorithm}
\newtheorem{question}[theorem]{Question}
\newtheorem{subsec}[theorem]{}
\newtheorem{clai}[theorem]{Claim}
\newtheorem{problem}{Problem}

\renewcommand*{\theproblem}{\arabic{problem}}

\def\toeq{{\stackrel{\sim}{\longrightarrow}}}
\def\into{{\hookrightarrow}}

\def\wt{\widetilde}

\def\kp{Val}


%

\title  [Multi-sorted logic and logical geometry: some problems] {Multi-sorted logic and logical geometry: some problems}

\author[ B. Plotkin] {\bf{ B. Plotkin}}

\address{Boris Plotkin: Institute of
Mathematics, Hebrew University, 91904, Jerusalem, ISRAEL}
\email{plotkin {\it at} macs.biu.ac.il}

\author[E. Plotkin]{E. Plotkin}
\address{Plotkin: Department of Mathematics
 Bar-Ilan University, 52900 Ramat Gan, Israel}



\maketitle

\tableofcontents
\section{Introduction}


\long\def\symbolfootnote[#1]#2{\begingroup%
\def\thefootnote{\fnsymbol{footnote}}\footnote[#1]{#2}\endgroup}

The paper has a form of a survey talk on the given topic.
This second paper continues the first one \cite{Plotkin_Gagta}. It consists of three parts, ordered in a way different from that of \cite{Plotkin_Gagta}. The accents are also different. This paper is focused on the relationship between the many-sorted theory, which  leads to logical geometry and one-sorted theory, which is based on the important model-theoretic concepts. Our aim is to show that both approaches go in parallel and there are bridges which allow to transfer results, notions and problems back and forth. Thus, an additional freedom in choosing an approach appear.

On our opinion, some simple proofs make the paper more vital.

The first part of the paper contains main notions, the second one
is devoted to logical geometry, the third part describes types and
isotypeness. The problems are distributed in the corresponding
parts. The whole material is oriented towards universal algebraic
geometry (UAG), i.e., geometry in an arbitrary variety of algebras
$\Theta$. We will distinguish between the equational algebraic
geometry and the logical geometry. In the equational geometry
equations have the form $w\equiv w'$, where $w$ and $w'$ are
elements of the free in $\Theta$ algebra $W(X)$. In the logical
geometry the elements of the multi-sorted first-order logic  play
the role of equations. We consider
 logical geometry (LG) as a part of UAG. This theory is strongly influenced by model theory and ideas of A.Tarski and A.I.Malcev.

I remember that A.I. Malcev, founding  the journal "Algebra and logic"  in Novosibirsk, had in mind a natural interrelation of these topics.


We fix a variety of algebras $\Theta$. Let $W=W(X)$ be the free in $\Theta$ algebra over a set of variables $X$. The set $X$ is assumed to be finite, if the opposite is not stated explicitly. In the latter case we use the notation $X^0$. All algebras under consideration are algebras in $\Theta$. Logic is also related to the variety $\Theta$. As usual, the signature of  $\Theta$ may contain constants.

\section{Main notions}\label{sec:mn}

In this section we consider a system of notions, we are dealing with. Some of them are not formally defined in this paper.
For the precise definitions and references  use  \cite{Plotkin_UA-AL-Datab}, \cite{Halmos},  \cite{Plotkin_Haz}, \cite{Pl-St}, \cite{PlAlPl}, \cite{MR}.

 The general picture of relations between these notions brings forward a lot of new problems, formulated in the following two sections. These problems are the main objective of the paper. Some results are also presented.

\subsection{Equations, points,  spaces of points and algebra of formulas $\Phi(X)$}\label{sub:1}

Consider a system $T$ of equations of the form $w=w'$, $w, w' \in W(X)$.

Each system $T$ determines an algebraic set of points in the corresponding affine space over the algebra $H \in \Theta$ for every $H$ and every finite $X$.

Let $X=\{x_1, \ldots , x_n \}$. We have an affine space $H^X$ of points  $\mu : X \to H$. For every $\mu$ we have also the $n$-tuple $(a_1, \ldots , a_n) = \bar a$ with $a_i = \mu(x_i)$. For the given $\Theta$ we have the homomorphism $$\mu : W(X) \to H$$ and, hence, the affine space is viewed as the set of homomorphisms $$Hom(W(X),H).$$

The classical kernel $Ker(\mu)$ corresponds to each point $\mu : W(X) \to H$.


Every point $\mu$ has also the logical kernel $LKer(\mu)$. Along with the algebra $W(X)$ we will consider
the algebra of formulas $\Phi(X)$. Logical kernel $LKer(\mu)$ consists of all formulas $u \in \Phi(X)$ valid on the point $\mu$.

The algebra $\Phi(X)$ will be defined later on, but let us note now that it is an extended Boolean algebra (Boolean algebra in which quantifiers $\exists x, x \in X$ act as operations, and equalities ($\Theta$-equalities) $w \equiv w'$, $w, w' \in W(X)$ are defined). It is also defined what does it mean that the point $\mu$ satisfies a formula $u \in \Phi(X)$. These $u$ are treated as equations. For $T \subset \Phi(X)$ in $Hom(W(X),H)$ we have an elementary set (definable set) consisting of points $\mu$ which satisfy every $u \in T$.

Each kernel $LKer(\mu)$ is a Boolean ultrafilter in $\Phi(X)$. Note that
 $$
 Ker(\mu)=LKer(\mu)\cap M_X,
 $$
where $M_X$ is the  set of all  $w\equiv w'$, $w,w'\in W(X)$.

\subsection{Extended Boolean algebras}

Let us make some comments regarding the definition of the notion of extended Boolean algebra.

Let $B$ be a Boolean algebra. An existential quantifier on $B$ is an unary operation $\exists : B\to B$ subject to conditions

\begin{enumerate}
\item
  $\exists (0) =0$,
\smallskip
\item
  $a \le \exists (a) $,
\smallskip
\item
 $\exists (a \wedge \exists b) = \exists a \wedge \exists b$.
\end{enumerate}

\noindent The {\it universal} quantifier $\forall : B \to B$ is defined dually:
\begin{enumerate}
\item
  $\forall(1) =1$,
\smallskip
\item
  $a \ge \forall (a)$,
\smallskip
\item
 $\forall (a \vee \forall b) = \forall a \vee \forall b$.
\end{enumerate}

\noindent Here the numerals $0$ and $1$ are zero and unit of the
Boolean algebra $B$ and $a,b$ are arbitrary elements of $B$.

As usual, the
quantifiers $\exists$ and $\forall$ are coordinated by: $\neg
(\exists a)=\forall (\neg a)$, and $ (\forall a)=\neg(\exists
(\neg a))$.

Now suppose that a variety of algebras $\Theta$ is fixed and $W(X)$ is the free in $\Theta$ algebra over the set of variables $X$. These data allow to define the extended Boolean algebra. This is a Boolean  algebra where the quantifiers $\exists x$ are defined for every $x\in X$  and
$$
\exists x\exists y= \exists y \exists x,
$$
for every $x$ and $y$ from X. Besides that, for every pair of elements $w,w'\in W(X)$ in an extended Boolean algebra the equality $w\equiv w'$ is defined. These equalities  are considered as nullary operations, that is as constants.  Each equality satisfies conditions of an equivalence relation, and  for every operation $\omega$ from the signature of algebras from $\Theta$ we have
$$
w_1\equiv w'_1 \wedge \ldots \wedge w_n \equiv w'_n\to w_1 \ldots
w_n \omega \equiv w'_1 \ldots w'_n \omega.
$$

 Note that quantifiers and Boolean connectives satisfy
  $$
 \exists (a\vee b)=\exists a \vee \exists b
 $$
   $$\forall (a\wedge b)=\forall a \wedge \forall b
 $$

 Algebra of formulas $\Phi(X)$ is an example of extended Boolean algebra in $\Theta$. Now consider another example.

\subsection{Important example}\label{ex:im}

 Let us start from an affine space $Hom(W(X), H)$. Let $Bool(W(X), H)$ be a Boolean algebra of all subsets in $Hom(W(X), H)$. Extend this algebra by adding quantifiers $\exists x$ and equalities. For $A \in Bool(W(X),H)$ we set: $B = \exists x A$ is a set of points $\mu : W(X) \to H$ such that there is $\nu : W(X) \to H$ in $A$ and $\mu(x') = \nu(x')$ for $x' \in X$, $x' \neq x$. It is indeed an  existential quantifier for every $x \in X$.

Define an equality in $Bool(W(X),H)$ for $w \equiv w'$ in $M_X$. Denote it by $[w \equiv w']_H$ and define it, setting $\mu \in [w \equiv w']_H$ if $(w,w') \in Ker(\mu)$, i.e., $w^\mu = {w'}^\mu$.

\begin{remark} The set  $[w \equiv w']_H$ can be empty. Thus we give the following definition. The equality  $[w \equiv w']_H$ is called admissible  for the given $\Theta$ if for every $H\in \Theta$ the set  $[w \equiv w']_H$ is not empty.   If $\Theta$ is the variety of all groups, then each equality is admissible.  The same is true for the variety of associative algebras with unity over complex numbers. However, for the field of real numbers this is not the case. Here $x^2+1=0$ is not an admissible equality.
\end{remark}

We assume that in each algebra of formulas $\Phi(X)$ lie all $\Theta$-equalities. To arbitrary equality
$w \equiv w'$ corresponds either a non-empty equality $[w \equiv w']_H$ in  $H\in \Theta$, or the empty set in $H\in \Theta$ which is the zero element of this Boolean algebra.

We arrived to an extended Boolean algebra, denoted now by $Hal^X_\Theta(H)$. This algebra and the algebra of formulas $\Phi(X)$ have the same signature.

\subsection{Homomorphism $Val^X_H$}\label{sub:qq}\label{sub:hom}

We will proceed from the homomorphism $$Val^X_H : \Phi(X) \to Hal^X_\Theta(H),$$ with the condition $Val^X_H(w \equiv w') = [w \equiv w']_H$ for equalities, if $[w \equiv w']_H$ non-empty, or $0$ otherwise. This homomorphism will be defined in subsection \ref{sub:val}. 
The existence of such homomorphism is not a trivial fact, since the equalities $M_X$ does not generate (and of course does not generate freely) the algebra $\Phi(X)$.
If, further, $u \in \Phi(X)$, then $Val^X_H(u)$ is a set of points in the affine space $Hom(W(X),H)$. We say that a point $\mu$ satisfies the formula $u$ if $\mu$ belongs to $Val^X_H(u)$. Thus, $Val^X_H(u)$ is precisely the set of points satisfying the formula $u$. Define the logical kernel $LKer(\mu)$ of a point $\mu$ as a set of all formulas $u$  such that $\mu \in Val^X_H(u)$.


We have also
$$Ker(\mu) = LKer(\mu) \cap M_X. $$

Here $Ker (\mu)$ is the set of all formulas of the form $w=w'$, $w, w'\in W(X)$, such that the point $\mu$ satisfies these formulas. In parallel, $LKer (\mu)$ is the set of all formulas $u$, such that the point $\mu$ satisfies these formulas.


Then,
$$Ker(Val^X_H) = Th^X(H), $$
$$\bigcap_{\mu:W(X) \to H} LKer(\mu) = Th^X(H). $$
Here $Th^X(H)$ is a set of formulas $u \in \Phi(X)$, such that $Val^X_H(u)$ is  unit in $Bool(W(X),H)$. That is $Val^X_H(u)=Hom(W(X),H)$  and thus $Th^X(H)$ is an $X$-component of the elementary theory of the algebra $H$.

In general we have a multi-sorted representation of the elementary theory $$Th(H)=(Th^X(H), X \in \Gamma).$$

 It follows from the previous considerations that the algebra of formulas $\Phi(X)$ can be embedded 
 in $Hal^X_\Theta(H)$ modulo elementary theory of the algebra $H$. This fact will be used in the sequel.

\subsection{Multi-sorted logic: first approximation}

Let, further, $X^0$ be an infinite set of variables and $\Gamma$ a system of all finite subsets $X$ in $X^0$.

So, in the logic under consideration we have an infinite system $\Gamma$ of finite sets instead of one infinite $X^0$.  This leads to multi-sorted logic. This approach is caused by relations with UAG. We distinguish UAG, equational UAG and LG in UAG. Correspondingly, we have algebraic sets of points and definable sets of points in the affine space.
In the third part of the paper along with the system of sorts $\Gamma$ we use also a system of sorts $\Gamma$ where one initial infinite set $X^0$ is added to the system $\Gamma$.

\subsection{Algebra $Hal_\Theta(H)$}\label{sub:hal}

All these algebras and corresponding categories present universal
semantics for the logic concerning with a variety $\Theta$. Syntax
of this logic is given by the algebra $\tilde \Phi$. The
homomorphism
$$
Val_{H}:\tilde \Phi \to Hal_{\Theta}(H)
$$
gives the correspondence between syntax and semantics. This
homomorphism and the homomorphism
$$
Val_{H}^{X}:\tilde \Phi(X) \to Hal^{X}_{\Theta}(H)
$$
will be defined at the end of the section.

 We
start with the category $\Theta ^{*}(H)$ of affine spaces. Its
objects are spaces
 $Hom(W(X),H)$, where $X \in \Gamma$.

 Morphisms
\[
\wt s: Hom(W(X), H) \to Hom (W(Y), H)
\]
of  $\Theta^{*}(H)$  are mappings induced by homomorphisms $s:W(Y)
\to W(X)$ according to the rule $\wt s(\nu) = \nu s$ for every
$\nu: W(X) \to H$.



 The correspondence $W(X)\to Hom(W(X),H)$ and $s\to \wt s$ gives rise to  a contravariant functor
\[
\varphi  : \Theta^0\to \Theta^{*} (H).
\]

Morphisms $\tilde s$ and $s$ act in the opposite direction. Note that if $s$ is surjective
then $\tilde s$ is injective, and if $s$ is injective then $\tilde s$ is surjective.

\begin{prop}\label{prop:dual}
The functor $\Theta^{0}\to \Theta^{*}(H)$ defines a duality of
categories if and only if $Var(H)=\Theta$.
\end{prop}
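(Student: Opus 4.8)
The plan is to reduce the assertion to a single separation property of $H$ and then to identify that property with the equational condition $Var(H)=\Theta$. First I would note that, by the very construction of $\Theta^{*}(H)$, the functor $\varphi$ is surjective on objects (every object is $Hom(W(X),H)=\varphi(W(X))$ for some $X\in\Gamma$) and full (every morphism of $\Theta^{*}(H)$ is by definition of the form $\wt s=\varphi(s)$ for some homomorphism $s$). Since a contravariant functor defines a duality of categories precisely when it is full, faithful and essentially surjective, the whole content of the statement is the faithfulness of $\varphi$: I must show that $\varphi$ is injective on hom-sets if and only if $Var(H)=\Theta$.

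Next I would unwind faithfulness. Two homomorphisms $s,s'\colon W(Y)\to W(X)$ satisfy $\wt s=\wt{s'}$ exactly when $\nu s=\nu s'$ for every $\nu\in Hom(W(X),H)$, that is, when $\nu(s(y))=\nu(s'(y))$ for all $y\in Y$ and all $\nu$. Reading this off generator by generator (and using that a pair $w,w'\in W(X)$ is realized by the one-variable maps $y\mapsto w$, $y\mapsto w'$ on $W(\{y\})$) shows that $\varphi$ is faithful iff $H$ separates elements of every free algebra in the following sense: for all $X\in\Gamma$ and all $w,w'\in W(X)$, if $w^{\nu}={w'}^{\nu}$ for every $\nu\colon W(X)\to H$, then $w=w'$ in $W(X)$.

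The crucial translation is that both sides of this separation statement are equational. A homomorphism $\nu\colon W(X)\to H$ is the same datum as an interpretation of the variables $X$ in $H$, so ``$w^{\nu}={w'}^{\nu}$ for all $\nu$'' says precisely that the identity $w\equiv w'$ holds in $H$; while, by the universal property of the free algebra, $w=w'$ in $W(X)$ holds iff $w\equiv w'$ is a consequence of the laws of $\Theta$, i.e. iff it holds throughout $\Theta$. Hence the separation property holds for every $X\in\Gamma$ iff every identity valid in $H$ is already valid in $\Theta$; since $Var(H)\subseteq\Theta$ always (as $H\in\Theta$), Birkhoff's theorem identifies this with $Var(H)=\Theta$. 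This settles the ``if'' direction at once. For ``only if'' I would argue contrapositively: if $Var(H)\subsetneq\Theta$ there is an identity $w\equiv w'$ holding in $H$ but not in $\Theta$, involving only finitely many variables $X\in\Gamma$; then $w\neq w'$ in $W(X)$, yet the distinct maps $s,s'\colon W(\{y\})\to W(X)$ sending $y$ to $w$ and to $w'$ satisfy $\wt s=\wt{s'}$, so $\varphi$ is not faithful.

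The routine parts are fullness and essential surjectivity, immediate from the definition of $\Theta^{*}(H)$. The step I would regard as the main obstacle is the double translation in the third paragraph: matching faithfulness of $\varphi$ with elementwise separation by $H$, and then matching separation with the coincidence of the identities of $H$ and of $\Theta$ through the freeness of $W(X)$. Once these two identifications are secured, both implications follow formally.
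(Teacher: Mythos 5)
Your proof is correct and follows essentially the same route as the paper: both reduce duality to faithfulness of $s\mapsto\wt s$, identify failure of faithfulness with the existence of an identity of $H$ that is not a law of $\Theta$, and use the same one-variable homomorphisms $W(\{y\})\to W(X)$ sending $y$ to $w$ and $w'$ as the witness in the ``only if'' direction. Your version is somewhat more explicit about fullness and essential surjectivity (which the paper leaves implicit, as they hold by construction of $\Theta^{*}(H)$), but the substance is the same.
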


\proof
 The condition of duality implies that if $s_1\ne s_2$ for the given morphisms $s_1,s_2: W(Y)\to
 W(X)$ then
 $\widetilde{s_1}\ne \widetilde{s_2}$.

 Let us assume that $Var(H)=\Theta$ and the categories are not dual, so there are
 morphisms $s_1$ and $s_2$ such that  $s_1\ne s_2$ but $\wt{s_1}= \wt{s_2}$.
 Take some $y\in Y$ such that  $s_1(y)=w_1$, $s_2(y)=w_2$ and  $w_1\ne w_2$. We will show
 that in the algebra $H$ there is the non-trivial identity $w_1\equiv
 w_2$. Take an arbitrary homomorphism $\nu: W(X)\to H$. The equality $\tilde s_1=\tilde s_2$
 implies $\tilde s_1(\nu)=\tilde s_2(\nu)$ or $\nu s_1=\nu s_2$.
 We apply this morphism to the variable $y$:
 $$
  \nu s_1(y)=\nu s_2(y)\mbox{ or }  \nu w_1=\nu w_2.
 $$
 Since $\nu:W(X)\to H$ is an arbitrary homomorphism, then $w_1\equiv w_2$ is an identity of the
 algebra $H$. But $Var(H)=\Theta$, which means that  there are no non-trivial identities in $H$. We have a contradiction and the condition $Var(H)=\Theta$
 implies duality of the given categories. 

 Now we show that if $Var(H)\subset \Theta$, then there is no
 duality. Let $w_1\equiv w_2$ be some non-trivial identity of the
 algebra $H$. Take $Y=\{ y_0 \}$ and let $s_1(y_0)=w_1$,
 $s_2(y_0)=w_2$. For any $\nu:W(X)\to H$ we have
 $$
  \nu w_1=\nu w_2, \ \nu s_1(y_0)=\nu s_2(y_0),\ \wt s_1(\nu)(y_0)=\wt s_2(\nu)(y_0).
 $$
Since the set $Y$ contains only one element $y_0$, then $\wt
  s_1(\nu)=\wt s_2(\nu)$. As $\nu $ is arbitrary, then $\wt s_1=\wt s_2$
  and there is no duality of the categories.

  $\square$



Define further a category of all $Hal_\Theta(H)$. Its objects are algebras $Hal^X_\Theta(H)$. Proceed from $s : W(X) \to W(Y)$ and pass to $\tilde s : Hom(W(Y),H) \to Hom(W(X),H)$. Take $$s_\ast(A) = \tilde s ^{-1}(A) = B \subset Hom(W(Y),H)$$ for every object $A \subset Hom(W(X),H)$. We have $\mu \in B$ if and only if $\mu s = \tilde s (\mu) \in A$. This determines a morphism $$s_\ast=s_\ast^H : Hal^X_\Theta(H)\to Hal^Y_\Theta(H).$$ Here $s_\ast$ is well coordinated with the Boolean structure, and relations with quantifiers and equalities are coordinated by  identities  from Definition \ref{def:ha}. The category $Hal_\Theta(H)$ can be also treated as a multi-sorted algebra $$Hal_\Theta(H)=(Hal^X_\Theta(H), X \in \Gamma).$$


\subsection{Variety of Halmos algebras $Hal_\Theta$}
Algebras in $Hal_\Theta$ have the form $$\mathfrak{L}= (\mathfrak{L}_X, \ X\in \Gamma).$$
Here all domains $\mathfrak{L}_X$ are $X$-extended Boolean algebras. The unary operation
$$
 s_\ast: \mathfrak{L}_X \to \mathfrak{L}_Y
 $$
 corresponds to each homomorphism  $
 s: W(X) \to W(Y)$.
 Besides, we will define a category $\mathfrak L$ of all $\mathfrak L_X$, $X \in \Gamma$ with morphisms $s_\ast : \mathfrak L_X \to \mathfrak L_Y$. The transition $s \to s_\ast$ determines a covariant functor $\Theta^0 \to \mathfrak L$. Informally, operations of $s_\ast$ type make logics dynamical.


Every $\mathfrak{L}_X$ is an $X$-extended Boolean algebra. Denote its signature by
 $$
 L_X = \{ \vee, \wedge, \neg, \exists x, M_X \} ,\ \mbox{for all } x
\in X.
$$

 Here $M_X$ stands for the set of all symbols of relations of equality of the form $w\equiv w'$.

 Denote by  $S_{X,Y}$ the set of symbols of operations $s_*$ 
of the type $\tau=(X;Y)$, where $X,Y\in\Gamma$. Define the signature
 $$
 L_\Theta= \{L_X, S_{X,Y}; X, Y \in \Gamma \}.
$$
The signature $L_\Theta$ is multi-sorted. We  take $L_\Theta$  as the signature of an arbitrary algebra from the variety of multi-sorted algebras $Hal_\Theta$. The constructed multi-sorted algebras  $Hal_\Theta(H)$ possess this signature with the natural realization of all operations from $L_\Theta$.

 There is a bunch of axioms which determine algebras from the variety   $Hal_\Theta$. For example, every $s_\ast$ respects Boolean operations in $\mathfrak L_X$ and $\mathfrak L_Y$. Correlations of $s_\ast$ with equalities and quantifiers are described by more complex identities. Below we give the complete list of axioms for $Hal_\Theta$ (see also \cite{PlAlPl}, \cite{Plotkin_Gagta}).


\begin{defn}\label{def:ha}
We call an algebra $\frak L = (\frak L _X, X \in \Gamma)$ in the
signature $L_\Theta$ a Halmos algebra, if

\begin{enumerate}

\item
 Every domain $\frak L _X$ is an extended Boolean algebra in the signature $L_X$.
\medskip
\item
Every mapping $s_*: \frak L _X \to \frak L _Y$ is
  a homomorphism of Boolean
algebras. Let $s: W(X) \to W(Y)$, $s': W(Y) \to W(Z)$, and let
$u\in \frak L _X$. Then $s'_*(s_*(u))=(s's)_*(u)$.

\medskip

\item Conditions 
controlling the interaction of $s_{*}$ with
quantifiers are as follows:

\begin{enumerate}

\medskip
\item[(a)]
 $s_{1*} \exists x a = s_ {2*} \exists x
a, \ a \in \frak L (X)$, if $s_1( y) = s_2(y)$ for every $y\neq x$, $x$, $y\in X$.


 \medskip

 \item[(b)]
   $s_{*}\exists x a = \exists (s(x)) (s_*a),$ $\ a \in \frak L (X)$,
if $ s(x) = y $ and $y $ is a variable which does not belong to
the support
 of $s(x')$,
 for every
$x' \in X$ and $x' \neq x$.

\noindent
This condition means that $y$  does not participate in the shortest expression of the
element $s(x')\in W(Y)$. 

\end{enumerate}

\item
   Conditions 
   controlling the interaction of $s_{*}$ with equalities are as follows:
  \begin{enumerate}

  \medskip

  \item[(a)]
   $s_{*}(w\equiv w')=(s(w)\equiv s(w'))$.

   \medskip

  \item[(b)]
   $(s^{x}_{w})_{*}a \wedge (w\equiv w')\le (s^{x}_{w'})_{*}a$,
   where $a\in \frak L (X)$ and $s^{x}_{w}\in End(W(X))$ is defined by: $s^{x}_{w}( x)=w$
   and $s^{x}_{w}(x')=x',$ for $x'\ne x$.

 \end{enumerate}

\end{enumerate}

\end{defn}

\begin{remark} We should note that all conditions from the definition of a Halmos algebra can be represented as identities, and this is why the class of Halmos algebras is indeed a variety.
\end{remark}

 Define $Hal_\Theta$ to be the variety of all Halmos algebras, that is every algebra from $Hal_\Theta$ satisfies Definition \ref{def:ha}.

 \begin{prop}\label{pr:bool}
 Each algebra $Hal_\Theta(H)$ belongs the variety $Hal_\Theta$.
 \end{prop}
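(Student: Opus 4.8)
The plan is to verify, one axiom at a time, that the concrete multi-sorted algebra $Hal_\Theta(H) = (Hal^X_\Theta(H), X \in \Gamma)$ satisfies every condition of Definition \ref{def:ha}. Condition (1), that each domain $Hal^X_\Theta(H) = Bool(W(X),H)$ equipped with the quantifiers $\exists x$ and the equalities $[w \equiv w']_H$ is an $X$-extended Boolean algebra, was essentially settled in subsection \ref{ex:im}, where $\exists x$ was seen to be a genuine existential quantifier; commutativity $\exists x \exists y = \exists y \exists x$ and the equivalence/congruence conditions on the $[w \equiv w']_H$ follow at once from the fact that, for each point $\mu$, equality in $H$ is a congruence on $W(X)$, so nothing new is required there. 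Condition (2) is almost free: since $s_\ast = \tilde s^{-1}$ is a preimage map it automatically commutes with union, intersection and complement, hence is a Boolean homomorphism; and functoriality $s'_\ast(s_\ast u) = (s's)_\ast u$ reduces to the contravariant identity $\widetilde{s's} = \tilde s \circ \widetilde{s'}$, which one reads off from $\widetilde{s's}(\lambda) = \lambda(s's) = \tilde s(\widetilde{s'}(\lambda))$.

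For the equality axioms (4) I would argue by direct evaluation, using the single observation that for $w \in W(X)$ and $\mu : W(Y) \to H$ one has $w^{\mu s} = \mu(s(w)) = (s(w))^\mu$. Axiom 4(a) then falls out immediately, since $s_\ast([w \equiv w']_H) = \{\mu \mid w^{\mu s} = (w')^{\mu s}\} = \{\mu \mid (s(w))^\mu = (s(w'))^\mu\} = [s(w) \equiv s(w')]_H$. For 4(b), I would take $\mu$ in $(s^x_w)_\ast a \wedge [w \equiv w']_H$ and set $\nu = \mu \circ s^x_w$, $\nu' = \mu \circ s^x_{w'}$; these homomorphisms agree on every $x' \neq x$ (both give $\mu(x')$), and on $x$ they give $w^\mu$ and $(w')^\mu$, which coincide because $\mu \in [w \equiv w']_H$. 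Hence $\nu = \nu'$, and since $\nu \in a$ we obtain $\nu' \in a$, i.e. $\mu \in (s^x_{w'})_\ast a$, which is the required inclusion (in fact equality here).

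The quantifier axioms (3) carry the real content. Axiom 3(a) is easy: $\mu \in s_{i\ast}\exists x a$ means $\mu s_i$ agrees off $x$ with some $\lambda \in a$, a condition involving only the values $\mu(s_i(x'))$ for $x' \neq x$; since $s_1$ and $s_2$ coincide off $x$ these values are identical, so membership is the same for $i = 1,2$. The crux is 3(b), $s_\ast \exists x a = \exists (s(x))(s_\ast a)$ with $s(x) = y$ a variable absent from the support of every $s(x')$, $x' \neq x$. I would prove both inclusions by a change-of-point construction. For $\subseteq$, given $\mu$ with $\mu s$ agreeing off $x$ with some $\lambda \in a$, define $\rho : W(Y) \to H$ by $\rho(y) = \lambda(x)$ and $\rho(y') = \mu(y')$ for $y' \neq y$; then $\rho$ and $\mu$ differ at most at $y$, and one checks $\rho s = \lambda$: at $x$ this reads $\rho(y) = \lambda(x)$, while at $x' \neq x$ the word $s(x')$ avoids $y$, so $\rho$ and $\mu$ agree on its support and $\rho(s(x')) = \mu(s(x')) = \lambda(x')$. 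Thus $\rho \in s_\ast a$ and $\mu \in \exists y(s_\ast a)$. The reverse inclusion is the mirror image: from $\rho \in s_\ast a$ agreeing with $\mu$ off $y$, put $\lambda = \rho s \in a$ and use the same support hypothesis to see $\lambda$ agrees with $\mu s$ off $x$. I expect 3(b) to be the main obstacle, precisely because the equality $\rho(s(x')) = \mu(s(x'))$, and hence the whole argument, collapses unless $y$ is genuinely missing from the supports of the $s(x')$; the support condition built into 3(b) is exactly what legitimises the construction of $\rho$.
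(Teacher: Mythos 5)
Your proof is correct and follows essentially the same element-chasing strategy as the paper: identical arguments for axioms 3(a), 4(a), 4(b), and the same point-surgery idea for 3(b). The only real difference is one of packaging: where you prove the inclusion $s_*\exists x\, a \subseteq \exists (s(x))(s_* a)$ directly by constructing $\rho$, the paper first generalizes to quantifiers $\exists(I)$ over a set of variables $I$ and proves $s_*\exists(s^{-1}(J))A=\exists(J)s_*A$ under an injectivity hypothesis, then specializes to $I=\{x\}$ --- and its point $\gamma$ is exactly your $\rho$ in that special case.
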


 This Proposition will be proved in Subsection \ref{sub:id}. Moreover,

   \begin{theorem}[\cite{Plotkin_AGinFOL}]\label{th:gen}
  All $Hal_\Theta(H)$, where $H$ runs $\Theta$, generate the variety $Hal_\Theta$.
\end{theorem}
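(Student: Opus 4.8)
The plan is to combine Proposition \ref{pr:bool} with the Birkhoff \textsc{hsp} description of the variety generated by a class. Write $K=\{Hal_\Theta(H):H\in\Theta\}$ and let $V=Var(K)$ be the variety it generates. Proposition \ref{pr:bool} gives $K\subseteq Hal_\Theta$, hence $V\subseteq Hal_\Theta$ immediately, so all the content lies in the reverse inclusion $Hal_\Theta\subseteq V$. By Birkhoff's theorem this reduces to the following: every free algebra $F=F_{Hal_\Theta}(Z)$ of $Hal_\Theta$, on an arbitrary sorted set of generators $Z=(Z_X,\ X\in\Gamma)$, embeds into a product of members of $K$; equivalently, the homomorphisms from $F$ into the concrete algebras $Hal_\Theta(H)$ separate the points of $F$.

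First I would make the free algebra $F$ concrete. Since the signature $L_\Theta$ already contains the equality constants $M_X$, the free Halmos algebra on the empty generating set is exactly the algebra of formulas $\widetilde\Phi=(\Phi(X),\ X\in\Gamma)$ of the pure language of $\Theta$, taken modulo the axioms of Definition \ref{def:ha}. Adjoining a generator $z$ of sort $X$ amounts to adjoining a new predicate symbol to be interpreted by an arbitrary subset of the affine space $Hom(W(X),H)$; thus $F$ is the formula algebra of the language of $\Theta$ expanded by these predicates, its elements being formulas modulo provable equivalence from Definition \ref{def:ha}. A homomorphism $F\to Hal_\Theta(H)$ is then freely determined by a choice, for each generator $z$ of sort $X$, of a subset $A_z\subseteq Hom(W(X),H)$; this is legitimate precisely because $Hal_\Theta^X(H)$ is the \emph{full} power-set Boolean algebra of the affine space, as in Subsection \ref{ex:im}. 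On formulas of the pure language these maps restrict to the valuations $Val^X_H$, whose kernel is already recorded by the embedding $\Phi(X)/Th^X(H)\hookrightarrow Hal_\Theta^X(H)$ together with the identity $\bigcap_{\mu}LKer(\mu)=Th^X(H)$.

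With this dictionary the required separation becomes a completeness statement. Given $p\neq q$ in $F$, represented by formulas that are not provably equivalent over Definition \ref{def:ha}, we have $p\wedge\neg q\neq 0$ (say), and I would produce a model $H\in\Theta$, an interpretation $(A_z)$ of the predicates, and a point $\mu$ satisfying one of $p,q$ but not the other; the induced homomorphism $F\to Hal_\Theta(H)$ then separates $p$ and $q$. This is a Henkin-style construction: extend the Boolean filter generated by $p\wedge\neg q$ to an ultrafilter compatible with the quantifier axioms 3(a),(b) and the substitution–equality axioms 4(a),(b), adjoining witnesses for each existential formula. The standing assumption that $X^0$ is infinite is used essentially here, so that a fresh variable is always available to serve as a Henkin witness, exactly in the sense of axiom 3(b). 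The resulting ultrafilter is realized as a logical kernel $LKer(\mu)$ of a point $\mu$ over a suitable $H$, which is the desired separating valuation.

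\emph{The main obstacle is this completeness step.} The Birkhoff reduction and the verification that the prescribed assignments extend to Halmos homomorphisms are routine (the latter is immediate from freeness and from $Hal_\Theta^X(H)$ being a full power set). The delicate point is that the quantifier and equality axioms of Definition \ref{def:ha} must be shown strong enough to force semantic separation — that is, provable equivalence over these axioms coincides with equivalence in all the $Hal_\Theta(H)$. Concretely, one must reproduce quantification and substitution faithfully at the abstract level, ensuring that $s_*$ and $\exists x$ act on the ultrafilter exactly as reindexing and projection act on affine spaces, and that existential witnesses can always be realized in some $H\in\Theta$; this is the multi-sorted, $\Theta$-relative analogue of the Halmos representation theorem. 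Once the separating homomorphisms are in hand, $F$ embeds into a product of algebras from $K$, so $F\in\mathrm{SP}(K)$; as $F$ was an arbitrary free algebra of $Hal_\Theta$, this gives $Hal_\Theta=Var(F)\subseteq\mathrm{HSP}(K)=V$, which together with $V\subseteq Hal_\Theta$ yields $V=Hal_\Theta$. $\qquad\square$
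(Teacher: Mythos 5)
Your Birkhoff reduction is sound as far as it goes: Proposition \ref{pr:bool} gives one inclusion, and generation reduces to showing that homomorphisms into the algebras $Hal_\Theta(H)$ separate points of the free algebras of $Hal_\Theta$. But the proof stops exactly where the theorem begins. The step you flag as ``the main obstacle'' --- that provable equivalence over the axioms of Definition \ref{def:ha} coincides with equivalence under all valuations $Val_H$, i.e.\ a Henkin/term-model construction realizing a consistent ultrafilter as a logical kernel $LKer(\mu)$ for some $H\in\Theta$ --- \emph{is} the entire mathematical content of Theorem \ref{th:gen}. Naming it as a ``multi-sorted, $\Theta$-relative analogue of the Halmos representation theorem'' does not discharge it, and it is genuinely delicate here: sorts $X\in\Gamma$ are finite, so fresh Henkin witnesses are not available inside a fixed $\Phi(X)$ but only by moving along the operations $s_*$ to larger sorts, and the witnessing point $\mu$ must land in an algebra of the fixed variety $\Theta$, not in an arbitrary structure. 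None of this is carried out, so what you have is a correct reduction of the theorem to an unproved completeness statement.

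It is also worth noting that the paper does not prove this theorem either (it cites \cite{Plotkin_AGinFOL}), but it lays out a different and much shorter intended route in the same subsection, quoting from \cite{BPlAlEPl}: every Halmos algebra is residually simple, and the simple algebras in $Hal_\Theta$ are exactly the algebras $Hal_\Theta(H)$ and their subalgebras. Granting these two structural facts, every $\mathfrak L\in Hal_\Theta$ is a subdirect product of simple quotients, each embeddable in some $Hal_\Theta(H)$, so $Hal_\Theta\subseteq \mathrm{SP}\bigl(\{Hal_\Theta(H)\}\bigr)$, and together with Proposition \ref{pr:bool} the theorem follows at once. In that approach the hard analytic work (your completeness step) is packaged inside the representation results of \cite{BPlAlEPl} rather than redone via a Henkin argument; if you want a self-contained proof, that is the statement you must actually establish.
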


In view of Theorem \ref{th:gen} one could define from the very beginning  the variety $Hal_\Theta$ as the variety, generated by all algebras $Hal_\Theta(H)$.

Recall, that every ideal of an extended Boolean algebra is a Boolean ideal invariant with respect to the universal  quantifiers action. Extended Boolean algebra is called simple if it does not have non-trivial ideals. In the multi-sorted case an ideal is a system of one-sorted ideals which respects all operations of  the form $s_\ast$. A multi-sorted Halmos algebra is simple if it does not have  non-trivial ideals. Algebras $Hal_\Theta(H)$ and their subalgebras are simple Halmos algebras, see \cite{BPlAlEPl}. Moreover, these algebras are the only simple algebras in the variety $Hal_\Theta$. Finally, every Halmos algebra is residually simple, see \cite{BPlAlEPl}. This fact is essential in the next subsection. Note, that all these facts are true because of the clever choice of the identities in the variety $Hal_\Theta$.

\subsection{Multi-sorted algebra of formulas}\label{sub:ms}

We shall define the algebra of formulas
$$
\widetilde \Phi = (\Phi(X),\ X\in \Gamma).
$$

 We define this algebra as the free over the multi-sorted set of equalities
  $$
 M=(M_X, X\in \Gamma)
 $$
 algebra  in $Hal_\Theta$. Assuming this property denote it as
 $$
Hal_\Theta^0=(Hal_\Theta^X, \ X\in \Gamma).
 $$
 So, $Hal_\Theta^X =\Phi(X)$ and $\widetilde \Phi=Hal_\Theta^0$.

 In order to define $Hal_\Theta^0$ we start from the  absolutely free over the same $M$ algebra
 $$\mathfrak L^0=(\mathfrak L^0(X), \ X\in \Gamma).
 $$
 This free algebra is considered in the signature of the variety $Hal_\Theta$. Algebra $
 \mathfrak L^0$ can be viewed as the algebra of pure formulas of the corresponding logical calculus.

 Then, $\widetilde \Phi$ is defined as the quotient algebra of $\mathfrak L^0$ modulo the verbal congruence of identities of the variety $Hal_\Theta$.
    The same algebra $\widetilde \Phi$ can be obtained from $\mathfrak L^0$ using the Lindenbaum-Tarski approach. Namely, basing on identities of $Hal_\Theta$ we distinguish in $ \mathfrak L^0$ a system of axioms and rules of inference. For every $X\in \Gamma$ consider the formulas
    $$
    (u\to v) \wedge (v\to u),
    $$
    where $u, v \in \mathfrak L^0(X)$. Here $u\to v $ means $ \neg u \vee v$. We assume that every $$
    (u\to v) \wedge (v\to u),
    $$
    is deducible from the axioms if and if the pair $(u,v)$ belongs to the $X$-component of the given verbal congruence.

So, $\widetilde \Phi$ can be viewed as an algebra of the compressed formulas modulo this congruence.


\subsection{Homomorphism $Val_H$}\label{sub:val}

Proceed from the mapping
$$
M_X \to Hal^X_\Theta(H),
$$
 which takes the equalities $w\equiv w'$ in $M_X$ to the corresponding equalities $[w\equiv w']_H$ in $Hal^X_\Theta(H)$.
  This gives rise also to the multi-sorted mapping
  $$
  M=(M_X, X\in \Gamma)\to Hal_\Theta(H)=(Hal^X_\Theta(H),\ X\in \Gamma).
  $$

  Since the multi-sorted set $M$ generates freely the algebra $\widetilde \Phi$ this
 mapping is  uniquely extended up to the homomorphism $$Val_H : \widetilde \Phi \to Hal_\Theta(H).$$
 Note that this homomorphism is a unique homomorphism from $\widetilde \Phi \to Hal_\Theta(H),$ since equalities are considered as constants.

  We have $$Val_H^X :  \Phi(X) \to Hal_\Theta^X(H),$$ i.e., $Val_H$ acts  componentwise for each $X \in \Gamma$.

  Recall that for every $u \in \Phi(X)$ the corresponding set $Val_H^X(u)$ is a set of points $\mu : W(X) \to H$ satisfying the formula $u$ (see Subsection \ref{sub:qq}). The logical kernel $LKer(\mu)$ was defined in Subsection \ref{sub:1} in these terms. Now we can say, that if a formula $u$ belongs to $\Phi(X)$ and a point  $\mu: W(X)\to H$ is given, then
  $$
  u\in LKer(\mu) \text{ if and only if } \mu \in Val^X_H(u).
  $$
We shall note that a formula $u$ can be, in general, of the form $u=s_*(v)$, where $v\in \Phi(Y)$, $Y$ is different from $X$. This means that the logical kernel of the point is very big and it gives a rich characterization of the whole theory.

Recall further that $LKer(\mu)$ is a Boolean ultrafilter containing the elementary theory $Th^X(H)$. Any ultrafilter with this property  will be  considered as an $X$-type of the algebra $H$.

  It is clear that
    $$
Ker(Val_H)=Th(H).
$$
This remark is used, for example, in
\begin{defn}%
An algebra $H \in \Theta$ is called saturated if for every $X\in \Gamma$  for each ultrafilter $T$ in $\Phi(X)$ containing $Th^X(h)$ there is a representation $T = LKer(\mu)$ for some $\mu : W(X) \to H$.
\end{defn}



Recall that the algebra $\widetilde \Phi$ is residually simple. This fact implies two important observations:

1. Let $u, v$ be two formulas in $\Phi(X)$. These formulas coincide if and only if for every algebra $H\in \Theta$ the equality
$$
Val^X_H(u)=Val^X_H(v)
$$
holds.

2. Let a morphism $s:W(X)\to W(Y)$ be given. It corresponds the morphism $s_\ast: \Phi(X)\to \Phi(Y)$. Let us take formulas $u\in \Phi(X)$ and $v\in \Phi(Y)$. The equality
$$
s_\ast(u)=v
$$
holds true if and only if for every algebra $H$ in $\Theta$ we have
$$
s_\ast(Val^X_H(u)=Val^Y_H(v).
$$


The following commutative diagram relates syntax with semantics
$$
\CD
\Phi(X) @> s_\ast >> \Phi(Y)\\
@V  \kp^X_H  VV @VV \kp^Y_H V\\
Hal_\Theta^X(H) @>s^H_\ast>> Hal_\Theta^Y(H)
\endCD
$$

We finished the survey of the notions of multi-sorted logic needed for UAG and in the next section we will relate these notions with the ideas of one-sorted logic used in Model Theory. Note also that we cannot define algebras of formulas $\Phi(X)$ individually. They are defined only in the multi-sorted case of algebras $\tilde \Phi = (\Phi(X), X \in \Gamma)$.

In fact, the definition of the algebra of formulas $\widetilde\Phi$ and the system of algebras $\Phi(X)$ is the main result of the first part of the paper. They are essentially used throughout the paper.

\subsection{Identities of the variety $Hal_\Theta$ for algebras $Hal_\Theta(H)$}\label{sub:id}

We have given already the definition of the algebras $Hal_{\Theta}(H)$. Now we show that
these algebras satisfy the axioms of Definition \ref{def:ha} and thus belong to the variety $Hal_\Theta$. In fact we should  check the correspondences
between $s_\ast$ and quantifiers and between $s_\ast$ and equalities.

First we consider interaction of $s_\ast$ with quantifiers. This
interaction is determined by  two following propositions.

\begin{prop}\label{Prop_Ch2_1}
Let $s_1$ and $s_{2}$ be morphisms $W(X)\to W(Y)$ and let $s_{1}(x')=s_{2}(x')$ for all
$x'\in X$, $x'\ne x$. Then the equality
$$
s_{1*}\exists x(A)=s_{2*}\exists x(A),
$$
where $A\subset Hom(W(X),H)$ holds in $Hal_{\Theta}(H)$.
\end{prop}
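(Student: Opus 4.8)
The plan is to reduce everything to the single structural feature of the existential quantifier: the set $\exists x(A)$ is a \emph{cylinder} in the $x$-direction, i.e.\ its membership depends only on the coordinates $x'\neq x$. Once this is isolated, the statement follows by unwinding the definition of $s_*$. First I would rewrite both sides. By the definition of $s_*$ in Subsection~\ref{sub:hal}, for a morphism $s\colon W(X)\to W(Y)$ and a set $B\subset Hom(W(X),H)$ one has $s_*(B)=\wt s^{\,-1}(B)=\{\mu\colon W(Y)\to H \mid \mu s\in B\}$. Applying this with $B=\exists x(A)$ and with $s=s_1,s_2$, the equality $s_{1*}\exists x(A)=s_{2*}\exists x(A)$ is equivalent to the assertion that, for every point $\mu\colon W(Y)\to H$,
$$
\mu s_1\in \exists x(A)\iff \mu s_2\in \exists x(A).
$$
So the problem is pointwise.

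Next I would record two elementary facts. (i) Since $W(X)$ is free on $X$, a homomorphism $W(X)\to H$ is determined by its values on the variables, so $\mu s_1$ and $\mu s_2$ may be viewed as points of $Hom(W(X),H)$; for every $x'\in X$ with $x'\neq x$ we have $(\mu s_1)(x')=\mu(s_1(x'))=\mu(s_2(x'))=(\mu s_2)(x')$, using the hypothesis $s_1(x')=s_2(x')$. Thus $\mu s_1$ and $\mu s_2$ agree in every coordinate except possibly the $x$-coordinate. (ii) By the definition of $\exists x$ in Subsection~\ref{ex:im}, a point $\lambda$ lies in $\exists x(A)$ iff there is $\nu\in A$ with $\lambda(x')=\nu(x')$ for all $x'\neq x$; this condition involves only the coordinates $x'\neq x$, so $\exists x(A)$ is invariant under altering the $x$-coordinate of a point.

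Combining (i) and (ii) yields the pointwise equivalence displayed above: $\mu s_1$ and $\mu s_2$ differ at most in the $x$-coordinate, and membership in $\exists x(A)$ is insensitive to that coordinate. Hence $s_{1*}\exists x(A)=s_{2*}\exists x(A)$. I do not expect a serious obstacle here; the only point requiring care is the clean separation of the cylinder property (ii) from the coordinate comparison (i), together with the observation that freeness of $W(X)$ is exactly what lets us treat $\mu s_1,\mu s_2$ as points whose $x'$-coordinates ($x'\neq x$) coincide. This is precisely the semantic counterpart of axiom (3)(a) of Definition~\ref{def:ha}, so the argument simultaneously confirms that $Hal_\Theta(H)$ validates that identity.
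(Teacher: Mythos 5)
Your proof is correct and is essentially the paper's own argument: the paper also reduces to points $\mu s_1,\mu s_2$, notes they agree on all $x'\neq x$, and reuses the same witness $\nu\in A$ for both, which is exactly your cylinder-invariance observation (ii) phrased with an explicit witness. The only difference is presentational — you state the pointwise equivalence and the cylinder property as separate facts, while the paper runs the two inclusions with the witness inline.
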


\proof 
 Let $\mu\in  s_{1*}\exists x(A)$ then $\mu s_{1}\in \exists
 x(A)$. In the set $A$ there is a point $\nu$ such that $\mu
 s_{1}(x')=\nu(x')$ for  $x'\ne x$, $x'\in X$. We also have the
 following equalities:
$$
\mu s_{2}(x')=\mu s_{1}(x')=\nu(x')
$$
and $\mu s_{2}\in \exists x(A)$. So $\mu\in s_{2*}\exists x (A)$. In a similar manner if
$\mu\in s_{2*}\exists x (A)$ then $\mu\in s_{1*}\exists x (A)$. Thus $s_{1*}\exists x
(A)=s_{2*}\exists x (A)$.

$\square$

Taking $A$ to be a point $a$ we obtain the axiom (3.a) of Definition \ref{def:ha}.
\medskip

\begin{prop}\label{Prop_Ch2_2}
Let $s:W(X)\to W(Y)$ be morphism. Take $x\in X$ and let $s(x)=y$ for some $y\in Y$. We
assume also that $y$ does not contain in the support of each $s(x')$, $x'\ne x$. Then
the equality
$$
s_{*}\exists x(A)=\exists s(x) s_{*}(A),
$$
where $A\subset Hom(W(X),H)$, holds in $Hal_{\Theta}(H)$.
\end{prop}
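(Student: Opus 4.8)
The plan is to prove the set equality $s_*\exists x(A)=\exists\, s(x)\, s_*(A)$ inside $Hal_\Theta(H)$ directly, by unwinding the definitions of $s_*$, of $\wt s$ and of the existential quantifier, and then checking the two inclusions separately. Recall that for $s:W(X)\to W(Y)$ the map $\wt s:Hom(W(Y),H)\to Hom(W(X),H)$ sends $\mu\mapsto \mu s$, that $s_*(A)=\wt s^{-1}(A)=\{\mu:W(Y)\to H \mid \mu s\in A\}$, and that a homomorphism out of a free algebra $W(Z)$ is freely and completely determined by its values on the generators $Z$. Writing $y=s(x)$, the first step is to record the key computational fact coming from the hypothesis: for every $x'\in X$ with $x'\ne x$ the element $s(x')\in W(Y)$ does not contain $y$ in its support, hence for any two homomorphisms $\mu,\lambda:W(Y)\to H$ that agree on all generators $y'\ne y$ one has $\mu(s(x'))=\lambda(s(x'))$, i.e. $(\mu s)(x')=(\lambda s)(x')$.

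For the inclusion $\exists\, y\, s_*(A)\subseteq s_*\exists x(A)$, I would take $\mu\in\exists\, y\,s_*(A)$, choose $\lambda\in s_*(A)$ with $\mu(y')=\lambda(y')$ for all $y'\ne y$, and put $\nu=\lambda s\in A$. By the fact above, $(\mu s)(x')=(\lambda s)(x')=\nu(x')$ for every $x'\ne x$, so $\mu s$ agrees with the point $\nu\in A$ off the coordinate $x$; by the definition of the quantifier this means $\mu s\in\exists x(A)$, that is $\mu\in s_*\exists x(A)$.

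The reverse inclusion $s_*\exists x(A)\subseteq \exists\, y\, s_*(A)$ is where the single-variable hypothesis $s(x)=y$ together with the support condition really do the work, and this is the step I expect to be the main obstacle, since it requires constructing a preimage point rather than merely transporting one. Starting from $\mu\in s_*\exists x(A)$, i.e. $\mu s\in\exists x(A)$, I would pick $\nu\in A$ with $(\mu s)(x')=\nu(x')$ for all $x'\ne x$, and then define a homomorphism $\lambda:W(Y)\to H$ on generators by $\lambda(y')=\mu(y')$ for $y'\ne y$ and $\lambda(y)=\nu(x)$; this is legitimate precisely because $W(Y)$ is free in $\Theta$. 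It then remains to verify $\lambda s=\nu$, for which it suffices to check agreement on the generators $x''\in X$: for $x''\ne x$ the support condition gives $\lambda(s(x''))=\mu(s(x''))=(\mu s)(x'')=\nu(x'')$, while for $x''=x$ one has $\lambda(s(x))=\lambda(y)=\nu(x)$. Hence $\lambda s=\nu\in A$, so $\lambda\in s_*(A)$, and since $\lambda$ agrees with $\mu$ on all $y'\ne y$ we conclude $\mu\in\exists\, y\, s_*(A)$.

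Finally I would note that the two inclusions together give the claimed identity, and emphasize that the support hypothesis is used essentially in both directions: it is exactly what guarantees that modifying the $y$-value of a homomorphism $W(Y)\to H$ affects only the $x$-coordinate of its composite with $s$, leaving the other coordinates $\nu(x')$ untouched. This establishes axiom (3.b) of Definition \ref{def:ha} for the concrete algebras $Hal_\Theta(H)$, as required.
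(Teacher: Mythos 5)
Your proof is correct: both inclusions are established properly, the support hypothesis is invoked exactly where it is needed, and the construction of $\lambda$ on generators followed by extension via freeness of $W(Y)$ is legitimate. Your first inclusion coincides with the paper's argument essentially verbatim. For the reverse (harder) inclusion your route differs in structure from the paper's: the paper first generalizes the statement, introducing block quantifiers $\exists(I)$ for a set of variables $I$ (where $\mu\in\exists(I)A$ iff some $\nu\in A$ agrees with $\mu$ off $I$), proves the identity $s_{*}\exists(s^{-1}(J))A=\exists(J)s_{*}A$ under an injectivity condition on $s$ over $J$, and only then specializes to $I=\{x\}$, $J=\{y\}$. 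Your argument performs the same key construction --- your point $\lambda$, agreeing with $\mu$ off $y$ and taking the value $\nu(x)$ at $y$, is precisely the paper's $\gamma$ with $\gamma(y)=\nu(s^{-1}(y))$ --- but carried out directly in the one-variable case, which avoids the set-theoretic bookkeeping (the conditions $s(I)=J$, $s^{-1}(J)=I$, injectivity on preimages) and is consequently shorter and easier to verify. What the paper's detour buys is the more general identity for quantifiers over sets of variables, of some independent interest; what your version buys is a cleaner, self-contained verification of axiom (3.b) of Definition \ref{def:ha}, which is all the proposition actually requires.
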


\proof
 Let $\mu\in \exists s(x) s_{*}(A)$. Take $\nu\in s_{*}A$ such
 that $\mu(y')=\nu(y')$, $y'\ne y=s(x)$, $y'\in Y$. We also have
 $\nu s=\gamma \in A$ and
 $$
  \mu(s(x'))=\mu s(x')=\nu(s(x'))=\nu s(x')=\gamma (x') 
 $$
 for every $x'\ne x$. So we have $\mu s\in \exists x(A)$ and $\mu \in s_{*}\big( \exists
 x(A)\big)$.

 Before  proving of the inverse inclusion we give some remarks.
 In first we generalize this situation. Instead of the one variable
 $x$ we will consider a set of variables $I$. Define the quantifier $\exists (I)$ by:
 $\mu\in \exists (I) A$ if there is a point $\nu$ in $A$
 such that $\mu(y)=\nu(y)$ for $y\not\in I$. Then we are
 interested in the following equality
 $$
  s_{*}\exists (I) A=\exists (s(I))s_{*}A.
 $$
 Let assume that $s(I)=J$ and $I\subset s^{-1}(J)$, and consider
 the equality $  s_{*}\exists (s^{-1}(J)) A=\exists (J)s_{*}A$. We
 will prove that it is true under the  condition: $s(x)=s(y)\in
 J$ if and only if $x=y$. Note that the latter condition follows from the
 assumption of our proposition.

 As before we check that if $\mu\in \exists (J) s_{*}A$
 then $\mu\in s_{*}\exists (s^{-1}(J)) A$.

 Let now $\mu\in s_{*}\exists (s^{-1}(J))
 A$. We will show that $\mu\in \exists (J) s_{*}A$. We have
 $\mu s\in \exists (s^{-1}(J)) A$ and $\nu\in A$ with $\mu s (y)=\nu
 (y)$ for all  $y\not\in s^{-1}(J)=I$.

 Now we choose the certain element $\gamma\in s_{*}A$. We assume
 that $\gamma(x)=\mu(x)$ for $x\not\in J$
 and $\gamma(x)=\nu(s^{-1}(x))$ if $x\in s(I)\subset J$.

 Take $x=s(x')$, $x'\in X$, $x\in J$. Then $x'=s^{-1}(x)$ and $x'$
 is uniquely defined by the element $x$. 
 So we have
$$
\gamma s(x')=\gamma (s(x'))=\nu(s^{-1}s(x'))=\nu(x'),
$$
where $x$ is an arbitrary element from the set $I$.

Let now $x'\not\in I$ and $s(x')=x$ does not belong to $J$. Then
$$
\gamma s(x')=\gamma(s(x'))=\mu (s(x'))=\mu s(x')=\nu(x').
$$
So we have $\gamma s(x')=\nu(x')$ for all $x'$ then  $\gamma s=\nu\in A$ and $\gamma\in
s_{*}A$. Thus $\mu\in \exists (J) s_{*}A$. As a result we have that
$$
s_{*}\exists (s^{-1}(J)) A=\exists (J)s_{*}A.
$$
We have started the proof of this equality  with the set $I$ and then turned to the set $s(I)=J$.
Using the condition $s(x)=s(y)$ implies $ x=y$ we have $s^{-1}(J)=I$. Now we can rewrite
the equality above as follows:
$$
s_{*}\exists (I) A=\exists (s(I))s_{*}A.
$$
If the set $I$ consist of only one element $x$ then we get that the statement of
Proposition~\ref{Prop_Ch2_2} holds.

$\square$

\medskip

Now we consider the correspondence between morphisms and equalities. Here we have two
conditions to check in $Hal_{\Theta}(H)$:
\begin{enumerate}
\item
 $s_{*}(w\equiv w')=\big( s(w)\equiv s(w')$\big),

\item
 $s^{x}_{w*}(A)\cap Val^{X}_{H}(w\equiv w')<
s^{x}_{w'*}A$,
\end{enumerate}
\noindent
 where $A\subset Hom(W(X),H)$.

We show that the first condition holds. Let $\mu: W(X)\to H$ be a point in
$s_{*}(w\equiv w')$. We have $\mu s \in Val^{X}_{H}(w\equiv w')$, $\mu s(w)=\mu s(w')$,
$(sw)^{\mu}=(sw')^{\mu}$, $\mu\in Val^{X}_{H}(s(w)\equiv s(w'))$.

Similarly we can check that if $\mu\in \big( s(w)\equiv s(w')\big)$ then  $\mu\in
s_{*}(w\equiv w')$.

Now we show that the second condition is true. Let
$$
\mu \in s^{x}_{w*}(A)\cap Val^{X}_{H}(w\equiv w').
$$
Then $\mu s^{x}_{w}\in A$ and $w^{\mu}=(w')^{\mu}$. From the last condition follows that
$\mu s^{x}_{w}(x)=\mu s^{x}_{w'}(x)$ and $\mu s^{x}_{w}(y)=\mu s^{x}_{w'}(y)$ for $y\ne x$.
This gives that $\mu s^{x}_{w}=\mu s^{x}_{w'}$. Since $\mu s^{x}_{w}\in A$ then $\mu
s^{x}_{w'}\in A$ and $\mu\in s^{x}_{w'*}(A)$.

Thus the correspondence between morphisms and equalities is verified.

So each algebra $Hal_{\Theta}(H)$ satisfies the identities of the variety $Hal_{\Theta}$.

\section{Logical geometry}

\subsection{Introduction}

The setting of logical geometry looks as follows. As before, we fix a variety of algebras $\Theta$.
Let $X=\{x_1, \ldots , x_n \}$ be a finite set of variables, $W(X)$ the free in $\Theta$ algebra over $X$,
$H$ an algebra in $\Theta$.  The set
$$Hom(W(X),H)$$
of all homomorphisms $\mu : W(X) \to H$ is viewed as the affine space of the sort $X$ over $H$.

Take the algebra of formulas $\Phi(X)$ which was defined in Subsection \ref{sub:ms}. Consider various subsets $T$ of $\Phi(X)$. We establish a Galois correspondence between such $T$ and sets of points $A$ in the space $Hom(W(X),H)$. This Galois correspondence gives rise to logical geometry in the given $\Theta$.

The notion  of the logical kernel  plays a major role in this correspondence. Recall (see Subsection \ref{sub:hom}),  that for every point $\mu:W(X)\to H$ in the algebra $\Phi(X)$ there exists it logical kernel $LKer(\mu)$, which is a Boolean ultrafilter in $\Phi(X)$, containing the elementary theory $Th^X(H)$.

Having in mind the context of the theory of models (see the next section), we view $LKer(\mu)$ as a LG-type (that is logically-geometric) type of the point $\mu$. Denote $LKer(\mu)=LG_H^X(\mu)$.

Note that the variety $\Theta$ is arbitrary and, correspondingly, the system of notions and statements of problems are of the universal character. However, even in the classical situation $\Theta=Com-P$ of the commutative and associative algebras with unit over the field $P$, a bunch of new problems and new results appear.

\subsection{Galois correspondence in the Logical Geometry}


Let us start with particular case when the set of formulas $T$ in $\Phi(X)$ is a set of equations of the form $w=w'$, $w, w' \in W(X)$, $X \in \Gamma$.


 We set
$$
A = T'_H = \{ \mu : W(X) \to H \ | \  T \subset Ker(\mu)\}.
$$
Here $A$ is an algebraic set in $Hom(W(X),H)$, determined by the set $T$.

Let, further, $A$ be a subset in $Hom(W(X),H)$. We set
$$
T = A'_H = \bigcap_{\mu \in A} Ker(\mu).
$$
Congruences $T$ of such kind are called  $H$-closed in $W(X)$. We have also Galois-closures $T''_H$ and $A''_H$.

Let us pass to general case of logical geometry. Let now $T$ be a set of arbitrary formulas in $\Phi(X)$. We set
$$
A = T^L_H = \{ \mu : W(X) \to H \ | \  T \subset LKer(\mu)\}.
$$
We have also
$$
A = \bigcap_{u \in T} Val^X_H(u).
$$
Here $A$ is called a definable  set in $Hom(W(X),H)$, determined by the set $T$ (cf., Section \ref{sub:mt}). We use the term "{\it definable}" for $A$ of such kind, meaning that $A$ is defined by some set of formulas $T$.

 For the set of points $A$  in $Hom(W(X),H)$ we set
$$
T = A^L_H = \bigcap_{\mu \in A} LKer(\mu).
$$


We have also 
$$
T = A^L_H = \{ u\in \Phi(X) \ |\  A \subset Val^X_H(u)\}.
$$

 Here $T$ is a Boolean filter in $\Phi(X)$ determined by the set of points $A$. Filters of such kind are Galois-closed and we can define the Galois-closures of arbitrary sets $T$ in $\Phi(X)$ and $A$ in $Hom(W(X),H)$ as $T^{LL}$ and $A^{LL}$.

\begin{prop}\cite{BPlAlEPl}\label{Prop_ClosedFiltAlg}
Intersection of $H$-closed filters is also $H$-closed filter.
\end{prop}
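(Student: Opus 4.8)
The plan is to reduce the statement to the basic order-reversing property of the Galois connection $T \mapsto T^L_H$, $A \mapsto A^L_H$. Recall that a filter in $\Phi(X)$ is $H$-closed precisely when it has the form $A^L_H$ for some set of points $A \subset Hom(W(X),H)$; equivalently, when it coincides with its Galois closure $T^{LL}_H$. So I want to show that an arbitrary intersection of filters of the form $A_i^L_H$ is again of this form.

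First I would record the key identity
$$
\bigcap_{i} (A_i)^L_H = \Big(\bigcup_i A_i\Big)^L_H.
$$
This follows at once from the defining description $A^L_H = \{ u \in \Phi(X) \mid A \subset Val^X_H(u)\}$: a formula $u$ lies in the left-hand side iff $A_i \subset Val^X_H(u)$ for every $i$, which is exactly the condition $\bigcup_i A_i \subset Val^X_H(u)$, i.e., $u \in (\bigcup_i A_i)^L_H$. No properties of $\Theta$ or of $H$ beyond the definition of $Val^X_H$ enter here.

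Now let $\{T_i\}_{i \in I}$ be a family of $H$-closed filters. By definition each $T_i = (A_i)^L_H$ for a suitable set of points $A_i$ (one may take $A_i = (T_i)^L_H$, since $H$-closedness gives $T_i = ((T_i)^L_H)^L_H$). Putting $A = \bigcup_i A_i$ and applying the identity above yields
$$
\bigcap_{i \in I} T_i = \bigcap_{i \in I} (A_i)^L_H = A^L_H,
$$
so the intersection is again of the form $A^L_H$ and hence $H$-closed. As $A^L_H$ is a Boolean filter in $\Phi(X)$ (as was already observed when the operator $(\,\cdot\,)^L_H$ was introduced), the intersection is indeed an $H$-closed filter, which is the claim.

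There is no serious obstacle: the entire content is the interchange of intersection on the syntactic side with union on the semantic side, which is the universal feature of any Galois connection turning joins into meets. The only points meriting a word of care are purely formal, namely that an arbitrary intersection of Boolean filters is again a Boolean filter, and that every subset of $Hom(W(X),H)$ is an admissible choice of $A$, so that $\bigcup_i A_i$ is legitimate. If one preferred to avoid naming the point sets $A_i$ explicitly, the same argument can be phrased entirely through closures, verifying $\big(\bigcap_i T_i\big)^{LL}_H = \bigcap_i T_i$ directly from the hypotheses $(T_i)^{LL}_H = T_i$.
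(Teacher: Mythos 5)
Your proof is correct and complete. One caveat about the requested comparison: the paper does not actually prove Proposition \ref{Prop_ClosedFiltAlg} — it is stated with a citation to the monograph \cite{BPlAlEPl} — so there is no in-paper argument to measure yours against; your proof stands on its own. What you give is the standard Galois-connection argument, and every step checks out: from the description $A^L_H=\{u\in\Phi(X)\mid A\subset Val^X_H(u)\}$ one gets immediately $\bigcap_{i}(A_i)^L_H=\bigl(\bigcup_{i}A_i\bigr)^L_H$, and since the $H$-closed filters are exactly those of the form $A^L_H$ (equivalently, the fixed points of $T\mapsto T^{LL}_H$, with witness $A=T^L_H$), an arbitrary intersection of $H$-closed filters is again of this form, hence $H$-closed. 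Your closing remark that one can instead verify $\bigl(\bigcap_i T_i\bigr)^{LL}_H=\bigcap_i T_i$ directly is also sound: one inclusion is extensivity of the closure operator, the other follows from monotonicity applied to $\bigcap_i T_i\subset T_i$ together with $T_i^{LL}=T_i$.
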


 \subsection{AG-equivalent and LG-equivalent algebras. LG-isotypic algebras}
Let us formulate two key definitions and two results (see, for example,  \cite{Pl-St}, \cite{Plotkin_SomeResultsUAG}).

\begin{defn}\label{def:AG} {
Algebras $H_1$ and $H_2$ are AG-equivalent, if $T''_{H_1}=T''_{H_2}$ always holds true.}
\end{defn}

\begin{defn}\label{def:lgeq} {
Algebras $H_1$ and $H_2$ are LG-equivalent, if $T^{LL}_{H_1}=T^{LL}_{H_2}$ always holds true.}
\end{defn}

Let now
$$
\bigwedge_{(w,w')\in T} w\equiv w' \to w_{0}\equiv w'_{0}
$$
be a quasiidentity. We will also write
$$
T\to w_{0}\equiv w'_{0}.
$$
This quasiidentity can be infinitary if the set $T$ is infinite. Note that $w_{0}\equiv
w'_{0}\in T''_{H}$ if and only if the quasiidentity $ T\to  w_{0}\equiv w'_{0}$ holds true
in the algebra $H$.

Now algebras $H_1$ and $H_2$ in $\Theta$ are $AG$-equivalent, if and only if each
quasiidentity $T\to w_{0}\equiv w'_{0}$ which
holds true in $H_{1}$ is a quasiidentity of the algebra $H_2$.

In particular, if $H_1$ and $H_2$ are $AG$-equivalent then they generate the same
quasi-variety. The inverse statement is not true (see \cite{MR}). Recall that
quasi-varieties are generated by systems of finitary quasi-identities.

Consider the following formula:
$$
\bigwedge_{u\in T} u \to v, \ v\in \Phi(X)
$$
or
$$
T\to v.
$$

The set $T$ can be infinite and then we speak about infinitary formula.

\begin{prop}
A formula $v$ belongs to $T^{LL}_{H}$ if and only if the formula $T\to v$ holds true in the
algebra $H$.
\end{prop}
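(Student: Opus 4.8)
The plan is to prove the statement purely by unwinding the definitions of the two Galois operators together with the semantic meaning of the (possibly infinitary) formula $T\to v$, and then to observe that both sides of the asserted equivalence reduce to one and the same set-theoretic inclusion inside $Hom(W(X),H)$.

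First I would rewrite the inner closure. By definition $T^{L}_{H}=\{\mu:W(X)\to H\mid T\subseteq LKer(\mu)\}$, and since $u\in LKer(\mu)$ if and only if $\mu\in Val^{X}_{H}(u)$ (Subsection~\ref{sub:val}), this set equals $A:=\bigcap_{u\in T}Val^{X}_{H}(u)$. Applying the outer operator and using $A^{L}_{H}=\{v\in\Phi(X)\mid A\subseteq Val^{X}_{H}(v)\}$ then gives
$$
v\in T^{LL}_{H}\iff \bigcap_{u\in T}Val^{X}_{H}(u)\subseteq Val^{X}_{H}(v).
$$
So the left-hand side of the proposition is exactly this inclusion.

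Next I would fix the semantics of $T\to v$. Since $T$ may be infinite, the conjunction $\bigwedge_{u\in T}u$ need not lie in $\Phi(X)$, so ``holds true in $H$'' has to be read semantically: a point $\mu$ satisfies $T\to v$ precisely when $\mu\in Val^{X}_{H}(u)$ for all $u\in T$ forces $\mu\in Val^{X}_{H}(v)$. Interpreting the connectives in the complete Boolean algebra $Hal^{X}_{\Theta}(H)=Bool(W(X),H)$, where arbitrary meets are honest intersections, the set of points satisfying $T\to v$ is $\big(Hom(W(X),H)\setminus\bigcap_{u\in T}Val^{X}_{H}(u)\big)\cup Val^{X}_{H}(v)$. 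The formula holds in $H$ iff every point satisfies it, i.e. iff this set is the whole space, which is equivalent to $\bigcap_{u\in T}Val^{X}_{H}(u)\subseteq Val^{X}_{H}(v)$.

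Comparing the two characterizations, both $v\in T^{LL}_{H}$ and ``$T\to v$ holds in $H$'' are equivalent to the same inclusion, and the proposition follows. I do not expect a genuine obstacle here; the only point requiring care is the legitimacy of passing to infinitary meets, which is exactly why I would carry out the computation on the semantic side in $Bool(W(X),H)$ rather than inside $\Phi(X)$. This is also the precise analogue of the equational statement ``$w_{0}\equiv w'_{0}\in T''_{H}$ iff $T\to w_{0}\equiv w'_{0}$ holds in $H$'' recorded just above, with $LKer$ and $Val^{X}_{H}$ playing the roles previously played by $Ker$ and algebraic sets.
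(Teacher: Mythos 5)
Your proposal is correct and follows essentially the same route as the paper's proof: both reduce $v\in T^{LL}_{H}$ and the validity of $T\to v$ in $H$ to the single inclusion $T^{L}_{H}=\bigcap_{u\in T}Val^{X}_{H}(u)\subseteq Val^{X}_{H}(v)$ by unwinding the Galois operators and the semantic reading of the implication. Your extra care about interpreting the infinitary conjunction inside $Bool(W(X),H)$ is a sound elaboration of a point the paper treats implicitly, not a different argument.
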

\begin{proof}
Take $A=T^{L}_{H}$. We have $v\in T^{LL}_{H}$ if and only if $A\subset Val^{X}_{H}(v)$. A
point $\mu$ belongs to $A$ if and only if $\mu$ satisfies every $u\in T$. We have that the
formula $T\to v$ holds true in $H$ if and only if for every point $\mu$ satisfying all
formulas $u\in T$ this point satisfies the formula $v$, i.e. $\mu\in Val^{X}_{H}(v)$. So
$A\subset Val^{X}_{H}(v)$ at the same time as $T\to v$ holds in $H$.
\end{proof}

From this proposition follows:
\begin{prop}\label{pr:imp}
Algebras $H_1$ and $H_2$ are $LG$-equivalent if for every $X\in \Gamma$ and $T\subset
\Phi(X)$ the formula $T\to v$ holds true in the algebra $H_1$ if and only if it is true in
the algebra $H_2$.
\end{prop}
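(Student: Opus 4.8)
The plan is to prove this as a direct corollary of the preceding proposition, which already characterizes membership in $T^{LL}_H$ in terms of the infinitary implication $T\to v$. The two definitions in play are Definition \ref{def:lgeq}, which says $H_1$ and $H_2$ are $LG$-equivalent exactly when $T^{LL}_{H_1}=T^{LL}_{H_2}$ for all $T$ and all $X\in\Gamma$, and the hypothesis of the present statement, which concerns agreement of the truth of every formula $T\to v$ across the two algebras. So the whole task reduces to transporting the equivalence ``$v\in T^{LL}_{H}\iff (T\to v)$ holds in $H$'' through a logical biconditional.

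First I would fix an arbitrary $X\in\Gamma$ and an arbitrary set $T\subset\Phi(X)$. By the preceding proposition, for each $H$ and each $v\in\Phi(X)$ we have $v\in T^{LL}_{H}$ if and only if the (possibly infinitary) formula $T\to v$ holds true in $H$. Applying this to $H_1$ gives $v\in T^{LL}_{H_1}\iff (T\to v)$ holds in $H_1$, and applying it to $H_2$ gives $v\in T^{LL}_{H_2}\iff (T\to v)$ holds in $H_2$. Under the stated hypothesis, the right-hand sides are equivalent: $(T\to v)$ holds in $H_1$ if and only if it holds in $H_2$, for every $v$. Chaining the three biconditionals yields $v\in T^{LL}_{H_1}\iff v\in T^{LL}_{H_2}$, and since $v$ was arbitrary this is exactly the set equality $T^{LL}_{H_1}=T^{LL}_{H_2}$.

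Finally I would note that $X$ and $T$ were arbitrary, so the equality $T^{LL}_{H_1}=T^{LL}_{H_2}$ holds for every sort and every set of formulas, which is precisely the condition in Definition \ref{def:lgeq} for $H_1$ and $H_2$ to be $LG$-equivalent. This completes the argument.

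The proof is essentially a quantifier bookkeeping exercise, so there is no serious mathematical obstacle; the only point demanding care is logical hygiene. The hypothesis quantifies over $v$ inside the phrase ``the formula $T\to v$ holds true,'' and one must make sure the equivalence is applied with the correct $v$ on each side before collapsing the two chains, rather than inadvertently fixing $v$ too early. I would also double-check the direction of the implication in the statement: as phrased it gives a sufficient condition (``if''), and the argument above in fact yields the biconditional, so the claim as stated follows a fortiori.
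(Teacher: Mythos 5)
Your proposal is correct and matches the paper's own route exactly: the paper states Proposition \ref{pr:imp} as an immediate consequence of the preceding proposition (that $v\in T^{LL}_{H}$ if and only if $T\to v$ holds in $H$), and your argument simply writes out the chain of biconditionals, for each $v$, between $v\in T^{LL}_{H_1}$, truth of $T\to v$ in $H_1$, truth in $H_2$, and $v\in T^{LL}_{H_2}$, then quantifies over $X$ and $T$ to invoke Definition \ref{def:lgeq}. Nothing further is needed.
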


Denote by $ImTh(H)$ the implicative theory of the algebra $H$. Remind that the implicative
theory is the set of all formulas of the form $T\to u$, for different $X\in \Gamma$, which
hold true in the algebra $H$. So algebras $H_1$ and $H_2$ are $LG$-equivalent if their
implicative theories are coincided i.e.,
$$
ImTh(H_1)=ImTh(H_2).
$$

\bigskip

Now we give one more approach to the $LG$-equivalence.
Let $T$ be a set of formulas from $\Phi(X)$ and let $T^{\vee}$ be the set of all
disjunctions of the formulas $u\in T$ and $\widetilde T^{\vee}$ be the set of all
disjunctions of the formulas $\neg u$ for $u\in T$. Here we have the following properties
$$
\neg(\bigwedge_{u\in T}u)=\widetilde T^{\vee};\ \neg(\bigwedge_{u\in T}\neg u)=T^{\vee}.
$$

We want to consider the disjunctive theory of the algebra $H$. The disjunctive theory of
the algebra $H$ is the set of all possible formulas $T^{\vee}$, for all $T\subset \Phi(X)$
and different $X\in \Gamma$, which hold true in the algebra $H$.

Note that the formula $T\to v$ holds true in the algebra $H$ if and only if the formula
$\widetilde T^{\vee}\vee v$ is true in $H$. Thus if the disjunctive theories of two
algebras $H_1$ and $H_2$  coincide then these algebras are $LG$-equivalent. Moreover
there is the following
\begin{prop}\label{pr:dis}
Algebras $H_1$ and $H_2$ are $LG$-equivalent if and only if their disjunctive theories
coincide.
\end{prop}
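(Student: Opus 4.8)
The plan is to reduce the statement to the characterization of $LG$-equivalence by implicative theories and then to show that the implicative theory and the disjunctive theory of an algebra determine one another by a purely syntactic translation, so that equality of the one is equivalent to equality of the other. To set up the reduction, recall from the (unlabeled) Proposition just before Proposition~\ref{pr:imp} that a formula $v$ lies in $T^{LL}_H$ exactly when the implication $T\to v$ holds in $H$; hence $T^{LL}_{H_1}=T^{LL}_{H_2}$ for all $T$ and all $X\in\Gamma$ is equivalent to $ImTh(H_1)=ImTh(H_2)$. Thus, by Definition~\ref{def:lgeq} and Proposition~\ref{pr:imp}, it suffices to prove that $ImTh(H_1)=ImTh(H_2)$ holds if and only if the disjunctive theories of $H_1$ and $H_2$ coincide.

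For the implication from coincidence of disjunctive theories to $LG$-equivalence (the direction already indicated in the text before the Proposition), I would use the identity $\widetilde T^{\vee}\vee v=\neg\big(\bigwedge_{u\in T}u\big)\vee v$, so that $T\to v$ holds in $H$ if and only if $\widetilde T^{\vee}\vee v$ holds in $H$. The key observation is that $\widetilde T^{\vee}\vee v$ is itself a disjunctive formula $S^{\vee}$, taken over the set $S=\{\neg u\mid u\in T\}\cup\{v\}\subset\Phi(X)$. Consequently every member $T\to v$ of the implicative theory holds in $H$ precisely when a certain disjunctive formula does, and equality of the disjunctive theories forces the same implications to hold in $H_1$ and in $H_2$, giving $ImTh(H_1)=ImTh(H_2)$.

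For the converse I would translate an arbitrary disjunctive formula back into an implication. Given $S\subset\Phi(X)$, fix one element $w_0\in S$ and set $v=w_0$ and $T=\{\neg w\mid w\in S,\ w\ne w_0\}$. Using $\neg\neg w=w$ one computes $\widetilde T^{\vee}\vee v=\big(\bigvee_{w\ne w_0}w\big)\vee w_0=S^{\vee}$, so the disjunctive formula $S^{\vee}$ holds in $H$ exactly when the implication $T\to v$ holds in $H$. Hence equality of the implicative theories yields equality of the disjunctive theories, and combining the two directions with the reduction above proves the Proposition.

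The individual steps are short; the only real care is bookkeeping at the level of the (possibly infinitary) Boolean operations in $\Phi(X)$. One should read ``$S^{\vee}$ holds in $H$'' semantically, as the validity sets $Val^X_H(w)$ with $w\in S$ covering all of $Hom(W(X),H)$, so that the two translations are genuine equivalences of validity rather than mere formal rewritings; one also checks the trivial edge cases where $T$ or $S$ is empty or $S$ is a singleton. I expect this semantic reading of infinitary disjunctions, together with the correct handling of the double negations in the two translations, to be the main, though quite modest, obstacle.
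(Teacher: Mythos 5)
Your proof is correct, and its overall skeleton matches the paper's: both arguments reduce $LG$-equivalence to coincidence of implicative theories (via the unlabeled proposition and Proposition \ref{pr:imp}) and then translate between implications and disjunctions using $\neg\bigl(\bigwedge_{u\in T}u\bigr)=\widetilde T^{\vee}$. Indeed, your first direction (disjunctive theories coincide $\Rightarrow$ $LG$-equivalent) is exactly the remark the paper makes just before the Proposition. The genuine difference is in the converse direction, i.e.\ in how a disjunction is encoded as a member of the implicative theory. You single out a disjunct $w_0\in S$ and write $S^{\vee}$ as the implication $\{\neg w \mid w\in S,\ w\ne w_0\}\to w_0$, cancelling double negations. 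The paper instead keeps all disjuncts on the premise side and takes the conclusion to be the unsatisfiable formula $v=(x\equiv y)\wedge(x\not\equiv y)$: since no point satisfies $v$, a point satisfies $T^{\vee}$ iff it satisfies the implication $\{\neg u\mid u\in T\}\to v$, and $LG$-equivalence transports this implication from $H_1$ to $H_2$. Your encoding avoids introducing an auxiliary contradictory formula (whose existence in $\Phi(X)$ the paper simply assumes) at the cost of requiring $S\ne\emptyset$ and a choice of distinguished disjunct; the paper's encoding is symmetric in the disjuncts and needs no such choice. Both are sound under the semantic reading of infinitary disjunctions that you spell out explicitly (validity sets covering $Hom(W(X),H)$), which is also the reading implicit in the paper, so the two proofs are equally rigorous and differ only in this translation step.
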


\begin{proof}
 Let algebras $H_1$ and $H_2$ be $LG$-equivalent.
 We take a set of formulas $T\subset
 \Phi(X)$ and consider the formula $\widetilde T^{\vee} \vee v$,
 where $v$ is the following formula $(x\equiv y)\wedge (x\not\equiv
 y)$. There is no point $\mu:W(X)\to H$ satisfying the formula
 $v$. So $\mu$ satisfies the formula $\widetilde T^{\vee} \vee v$
 if and only if $\mu$ satisfies the formula $\widetilde T^{\vee}$.
 It means that there is  $u\in T$ such that the point $\mu$ does
 not satisfy the formula $u$ and so this point satisfies the
 formula $T\to v$.

 Now let $H=H_1$ and let $T^{\vee}$ be the formula which  hold true in the algebra
 $H_1$. An arbitrary point $\mu_{1}:W(X)\to H_1$ satisfies
 $T^{\vee}$ and $\widetilde T^{\vee} \to
 v$. Since the algebras $H_1$ and $H_2$ are $LG$-equivalent then
 every point $\mu_{2}:W(X)\to H_2$ satisfies the formula $\widetilde T^{\vee} \to
 v$ and, hence, it satisfies the formula $T^{\vee}\vee v$.
 So  each point $\mu_{2}:W(X)\to H_2$ satisfies the formula
 $T^{\vee}$. Thus the disjunctive theories of $H_1$ and $H_2$
  coincide.
\end{proof}

Note that 
\begin{prop}
 If algebras $H_1$ and $H_2$ are
$LG$-equivalent then they are elementarily equivalent.
\end{prop}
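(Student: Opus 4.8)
The plan is to identify ordinary elementary equivalence with the equality of all components of the multi-sorted elementary theory, $Th^X(H_1)=Th^X(H_2)$ for every $X\in\Gamma$, and then to obtain each such equality by exhibiting $Th^X(H)$ as a double Galois closure $T^{LL}_H$ for one fixed set $T$ that does not depend on $H$. Once this is done, $LG$-equivalence (Definition~\ref{def:lgeq}), which by definition means $T^{LL}_{H_1}=T^{LL}_{H_2}$ for \emph{every} $T$, yields the coincidence of the theories upon specializing $T$.

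The key computation I would carry out is the following. Fix $X\in\Gamma$ and let $\tau=1$ be the unit of the Boolean algebra $\Phi(X)$ (any tautology, e.g.\ $v\vee\neg v$, does equally well). Since $Val^X_H$ is a homomorphism of Boolean algebras, $Val^X_H(\tau)=Hom(W(X),H)$ for every $H\in\Theta$. Taking the Galois closure once,
$$
\{\tau\}^L_H=\bigcap_{u\in\{\tau\}}Val^X_H(u)=Val^X_H(\tau)=Hom(W(X),H),
$$
so $\{\tau\}^L_H$ is the whole affine space. Applying the closure a second time and invoking the identity $\bigcap_{\mu:W(X)\to H}LKer(\mu)=Th^X(H)$ recorded in Subsection~\ref{sub:hom},
$$
\{\tau\}^{LL}_H=\bigcap_{\mu\in Hom(W(X),H)}LKer(\mu)=Th^X(H).
$$
Hence for every algebra $H$ and every sort $X$ one has $Th^X(H)=\{\tau\}^{LL}_H$, where the set $\{\tau\}$ is chosen independently of $H$.

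To conclude, I would read off Definition~\ref{def:lgeq} with $T=\{\tau\}$: from $\{\tau\}^{LL}_{H_1}=\{\tau\}^{LL}_{H_2}$ we get $Th^X(H_1)=Th^X(H_2)$, and since $X\in\Gamma$ was arbitrary this is precisely $Th(H_1)=Th(H_2)$, i.e.\ $H_1$ and $H_2$ are elementarily equivalent. As an alternative route one could pass through the implicative theory: by the Proposition characterizing $T^{LL}_H$ via the implication $T\to v$, the formula $\{\tau\}\to u$ holds in $H$ exactly when $u\in Th^X(H)$, so the coincidence $ImTh(H_1)=ImTh(H_2)$ forces $Th^X(H_1)=Th^X(H_2)$ as well.

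I expect the only real friction to be conceptual bookkeeping rather than a substantial obstacle. The step I would state most carefully is the identification of first-order elementary equivalence with the equality of every $Th^X(H)$: for a closed formula $u$ the set $Val^X_H(u)$ is either all of $Hom(W(X),H)$ or empty, so $u\in Th^X(H)$ tracks the truth of the corresponding sentence in $H$, and demanding this for all $X\in\Gamma$ is exactly the assertion that $H_1$ and $H_2$ satisfy the same first-order sentences. Everything else is the one-line specialization of the definition of $LG$-equivalence, so no genuinely hard estimate or construction is needed.
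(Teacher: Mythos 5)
Your proof is correct: every identity you invoke ($T^L_H=\bigcap_{u\in T}Val^X_H(u)$, $A^L_H=\bigcap_{\mu\in A}LKer(\mu)$, and $\bigcap_{\mu:W(X)\to H}LKer(\mu)=Th^X(H)$) is recorded in Section~\ref{sec:mn}, and specializing Definition~\ref{def:lgeq} to the tautological set $T=\{\tau\}$ does yield $Th^X(H_1)=Th^X(H_2)$ for every $X\in\Gamma$. The route differs from the paper's in a small but real way: the paper never computes a double closure directly; instead it takes the formula $u\to v$ with $u=(x\equiv x)$ and invokes the implicative criterion (Proposition~\ref{pr:imp}, resting on the proposition that $v\in T^{LL}_H$ iff $T\to v$ holds in $H$), so that validity of $u\to v$ transfers between $LG$-equivalent algebras and hence $Th(H_1)=Th(H_2)$. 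Your version bypasses that implicative machinery entirely and works straight from the Galois correspondence, which has the side benefit of exhibiting the elementary theory $Th^X(H)$ as itself an $H$-closed filter, namely the closure of the Boolean unit; the paper's version is shorter once Proposition~\ref{pr:imp} is available and fits its running theme of implicative and disjunctive theories. The alternative route you sketch at the end is, in fact, exactly the paper's proof, so the two arguments are two faces of the same specialization-at-a-tautology idea, connected by the proposition identifying $T^{LL}_H$ with the validity of $T\to v$.
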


\begin{proof}
 Indeed, let consider the formula $u\to v$, where $u$ is the
 formula $x\equiv x$. This formula holds true in the algebra $H$
 if and only if the formula $v$ is true in $H$, i.e. $v\in Th(H)$.
 If algebras $H_1$ and $H_2$ are $LG$-equivalent then the formula $u\to v$
 holds in $H_1$ if and only if it is true in $H_2$.
 Thus $v\in Th(H_1)$ if and only if $v\in Th(H_2)$, that is $Th(H_1)=Th(H_2)$.
\end{proof}

\begin{defn}\label{def:iso}
 Two algebras $H_1$ and $H_2$ are called LG-isotypic (cf. Section \ref{sec:isotyp}) if for every point $\mu:W(X)\to H_1$ there exists a point $\nu:W(X)\to H_2$ such that $LKer(\mu)=LKer(\nu)$ and, conversely, for every point $\nu:W(X)\to H_2$ there exists a point $\mu:W(X)\to H_1$ such that $LKer(\nu)=LKer(\mu)$.
\end{defn}

The main theorem is the following
\begin{theorem}\label{thm:lgiso} 
Algebras $H_1$ and $H_2$ are $LG$-equivalent if and only if they are LG-isotypic.
\end{theorem}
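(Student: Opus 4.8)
The plan is to translate the statement entirely into the language of implicative theories and realized types, where the equivalence becomes transparent. First I would recall the Proposition just proved: $v \in T^{LL}_H$ if and only if the implication $T \to v$ holds in $H$. Consequently $T^{LL}_{H_1} = T^{LL}_{H_2}$ for every $T \subset \Phi(X)$ and every $X$ holds precisely when, for all $T$ and $v$, the formula $T \to v$ holds in $H_1$ exactly when it holds in $H_2$; that is, $LG$-equivalence of $H_1$ and $H_2$ is the same as $ImTh(H_1) = ImTh(H_2)$. Thus the theorem reduces to showing that the implicative theories coincide if and only if $H_1$ and $H_2$ realize the same logical kernels.

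For the direction isotypic $\Rightarrow$ $LG$-equivalent, I would observe that whether $T \to v$ holds in $H$ depends only on the family of realized types $\{LKer(\mu) : \mu : W(X) \to H\}$: indeed $T \to v$ holds in $H$ if and only if every point $\mu$ with $T \subset LKer(\mu)$ satisfies $v \in LKer(\mu)$. So if $H_1$ and $H_2$ are $LG$-isotypic and $T \to v$ holds in $H_1$, then for any $\nu : W(X) \to H_2$ with $T \subset LKer(\nu)$, isotypy yields $\mu : W(X) \to H_1$ with $LKer(\mu) = LKer(\nu)$; since $T \to v$ holds in $H_1$ we get $v \in LKer(\mu) = LKer(\nu)$. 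Hence $T \to v$ holds in $H_2$, and by symmetry $ImTh(H_1) = ImTh(H_2)$.

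The substantive direction is $LG$-equivalent $\Rightarrow$ isotypic. Fix a point $\mu : W(X) \to H_1$ and set $p = LKer(\mu)$, an ultrafilter in $\Phi(X)$; I must produce $\nu : W(X) \to H_2$ with $LKer(\nu) = p$. The key device is the contradictory formula $c = (x \equiv x) \wedge \neg(x \equiv x)$, for which $Val^X_H(c) = \emptyset$ in every $H$. Then the infinitary implication $p \to c$ holds in $H$ if and only if no point satisfies all of $p$ simultaneously, that is, if and only if $p$ is not realized in $H$. Since $p = LKer(\mu)$ is realized in $H_1$, the implication $p \to c$ fails in $H_1$, so $p \to c \notin ImTh(H_1) = ImTh(H_2)$; therefore $p \to c$ fails in $H_2$, i.e. some $\nu : W(X) \to H_2$ satisfies all of $p$, giving $p \subset LKer(\nu)$. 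Because both $p$ and $LKer(\nu)$ are ultrafilters, the inclusion forces $p = LKer(\nu)$, so $p$ is realized in $H_2$. The symmetric argument handles types realized in $H_2$, completing isotypy.

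The main obstacle I anticipate is exactly this last step: it is essential that implicative theories be allowed to contain infinitary implications $T \to v$ with infinite $T$ (here $T = p$), and that the realized logical kernels be genuine ultrafilters, so that containing the full type forces equality with it. Both points are available in the setup, since the paper explicitly admits infinitary implications and each $LKer$ is an ultrafilter containing $Th^X(H)$, so no further machinery is needed beyond assembling these observations.
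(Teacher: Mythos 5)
Your proof is correct, but it takes a genuinely different route from the paper's. The paper works directly with the Galois correspondence: for the equivalent-implies-isotypic direction it observes that $T=LKer(\mu)$ is an $H_1$-closed maximal filter, transfers closedness via $T=T^{LL}_{H_1}=T^{LL}_{H_2}$, writes $T=\bigcap_{\nu\in B}LKer(\nu)$ with $B=T^L_{H_2}$, checks separately that $B\neq\emptyset$ (otherwise $T^{LL}_{H_2}=\Phi(X)$, contradicting properness), and concludes by maximality; for the converse it represents an arbitrary $H_1$-closed filter as an intersection of realized kernels, transfers each kernel by isotypy, and invokes Proposition \ref{Prop_ClosedFiltAlg} (intersections of $H$-closed filters are $H$-closed). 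You instead push everything through the implicative-theory characterization: LG-equivalence becomes $ImTh(H_1)=ImTh(H_2)$ by the two propositions proved just before the theorem, the isotypic-implies-equivalent direction is immediate because the implicative theory is determined by the family of realized logical kernels (no appeal to Proposition \ref{Prop_ClosedFiltAlg} needed), and realizability of a type $p$ is encoded as failure of the infinitary implication $p\to c$ with $c$ contradictory --- the same device the paper uses in Proposition \ref{pr:dis} --- after which ultrafilter maximality upgrades $p\subseteq LKer(\nu)$ to equality. The two arguments share their mathematical core (transfer of realizability plus maximality of kernels; your $p\to c$ step is exactly the paper's non-emptiness check in implicative clothing), but yours is more self-contained and stays at the level of formulas, while the paper's stays in the geometric language of closed filters and establishes along the way that the lattices of $H_1$-closed and $H_2$-closed filters literally coincide, which is what feeds its category-isomorphism results later.
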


\begin{proof}
 %
  Let $H_1$ and $H_2$ be $LG$-equivalent algebras.
 By  definition 
 for any finite set $X$,
 and any $H_1$-closed filter $T$ from  $\Phi(X)$ we have:
$$
T=T^{LL}_{H_1}=T^{LL}_{H_2}.
$$
So, $T$ is $H_1$-closed if and only if it is $H_2$-closed.

Let $T=LKer(\mu)$ be the logical kernel of a point $\mu:W(X) \to H_1$. Then $T^L_{H_1}=A$, where $A=\{\mu\}$ and $T^{LL}_{H_1}=A^L_{H_1}=LKer(\mu)=T$. So, $T$ is an $H_1$-closed filter. Hence, $T$ is an $H_2$-closed filter. Since $T=LKer(\mu)$, the filter $T$ is maximal. Since $H_1$ and $H_2$ are $LG$-equivalent, there exists a set $B$ in $Hom(W(X),H_2)$ such that $B^L_{H_{2}}=T$. Then $T=\bigcap_{\nu\in B}LKer(\nu)$. Since the filter $T$ is maximal, $LKer(\nu)=T=LKer(\mu)$ for all points $\nu\in B$.

Note that we used the fact that $T^{L}_{H_2}$
is not empty. Indeed, if we assume that $T^L_{H_2}=\{ \emptyset \}$ then $T^{LL}_{H_2}=\{
\emptyset \}^{L}_{H_2}=\Phi(X)=T^{LL}_{H_1}=T$, but
$T$ is a proper filter.

%


In the similar way one can prove that if $T=LKer(\mu)$ is the logical kernel of a point $\nu:W(X) \to H_2$, then there exists a point $\mu:W(X) \to H_1$ such that $ LKer(\nu) =  LKer(\mu).$ Hence, $H_1$ and $H_2$ are isotypic.

Let, further, $H_1$ and $H_2$ be isotypic algebras. This means that if
$T=LKer(\mu)$ is the logical kernel of a point $\mu:W(X) \to H_1$, then $T=LKer(\nu)$ is
the logical kernel for some $\nu:W(X) \to H_2$ as well, and vice versa. Recall, that every
logical kernel is a closed filter, so $T$ is $H_1$- and $H_2$-closed filter.

Let, now, $T$ be an arbitrary $H_1$-closed filter in $\Phi(X)$. We will show that $T$ is
$H_2$-closed.

Let $T^L_{H_1}=A$, then $T=T^{LL}_{H_1}=A^{L}_{H_1}= \bigcap_{\mu \in A}LKer(\mu)$. Since $H_1$ and $H_2$ are isotypic, there exist points $\nu:W(X) \to H_2$ such that $\bigcap_{\nu \in Hom(W(X),H)}LKer(\nu)=\bigcap_{\mu \in A}LKer(\mu).$
%
According to Proposition~\ref{Prop_ClosedFiltAlg}, the intersection of $H$-closed filters
is also $H$-closed filter, hence $T$ is an $H_2$-closed filter.

Similarly, we can prove that each $H_2$-closed filter  is $H_1$-closed.  Hence, $H_1$ and
$H_2$ are $LG$-equivalent.
\end{proof}

From this theorem follows
\begin{corol}\label{cor:el}
If the algebras $H_1$ and $H_2$ are isotypic, then they are elementarily equivalent.
\end{corol}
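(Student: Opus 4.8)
The plan is to obtain the statement as an immediate consequence of the two results already established. Theorem~\ref{thm:lgiso} asserts that $LG$-isotypy and $LG$-equivalence are equivalent properties, so from the hypothesis that $H_1$ and $H_2$ are isotypic I would first deduce that they are $LG$-equivalent. The proposition proved just before Definition~\ref{def:iso} states that $LG$-equivalence implies elementary equivalence; composing these two implications yields the corollary with no further work. This is the route dictated by the placement of the statement as a corollary of Theorem~\ref{thm:lgiso}.

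It is worth recording a direct argument as well, since it clarifies what is really going on and avoids routing through $LG$-equivalence. Recall from Subsection~\ref{sub:hom} the identity
$$
\bigcap_{\mu:W(X)\to H}LKer(\mu)=Th^X(H),
$$
valid for every $H\in\Theta$ and every $X\in\Gamma$. Fix $X$. The hypothesis that $H_1$ and $H_2$ are isotypic says precisely that the two families of ultrafilters
$$
\{\,LKer(\mu)\mid \mu:W(X)\to H_1\,\}\quad\text{and}\quad\{\,LKer(\nu)\mid \nu:W(X)\to H_2\,\}
$$
coincide as sets: each kernel occurring for $H_1$ occurs for $H_2$ and conversely. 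Since the intersection of a family of sets depends only on the family, their intersections agree, whence $Th^X(H_1)=Th^X(H_2)$. As this holds for every $X\in\Gamma$ and $Th(H)=(Th^X(H),\ X\in\Gamma)$, we obtain $Th(H_1)=Th(H_2)$, i.e. elementary equivalence.

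There is essentially no obstacle in either route; the content sits entirely in the results already invoked. The only points that require care are bookkeeping ones: confirming that \emph{elementary equivalence} is here the multi-sorted notion, namely componentwise coincidence $Th^X(H_1)=Th^X(H_2)$ for all $X\in\Gamma$, and checking that the phrasing of Definition~\ref{def:iso} (``for every $\mu$ there is $\nu$ \dots and conversely'') does deliver genuine set-equality of the two families of logical kernels, so that the intersections literally coincide rather than merely being comparable. Both are routine, which is why the statement is correctly labelled a corollary.
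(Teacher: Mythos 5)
Your first route is exactly the paper's proof: the paper deduces $LG$-equivalence from isotypy via Theorem~\ref{thm:lgiso} and then runs the implicative argument with the formula $x\equiv x\to u$ (Proposition~\ref{pr:imp}, i.e.\ the proposition preceding Definition~\ref{def:iso}), so on that route you and the paper coincide.

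Your second, direct route is genuinely different and also correct, and it is arguably the cleaner argument. The paper's proof leans on Theorem~\ref{thm:lgiso}, whose proof is the substantial part of the section (it uses the Galois correspondence, maximality of the filters $LKer(\mu)$, and Proposition~\ref{Prop_ClosedFiltAlg}); by contrast, your direct argument needs none of that. It uses only two ingredients already available in Section~\ref{sec:mn}: the identity $\bigcap_{\mu:W(X)\to H} LKer(\mu)=Th^X(H)$ from Subsection~\ref{sub:hom}, and the observation that Definition~\ref{def:iso} forces the two families $\{LKer(\mu)\mid \mu:W(X)\to H_1\}$ and $\{LKer(\nu)\mid \nu:W(X)\to H_2\}$ to coincide as sets of ultrafilters, so their intersections are literally equal. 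This shows that ``isotypic $\Rightarrow$ elementarily equivalent'' holds at the level of the definitions, independently of the equivalence between isotypy and $LG$-equivalence; what the paper's route buys instead is uniformity, since the same implicative machinery simultaneously handles all formulas $T\to v$ and not just those with $T=\{x\equiv x\}$. Your bookkeeping remarks (multi-sorted reading of elementary equivalence, and set-equality rather than mere comparability of the kernel families) are exactly the right points to check, and both do hold.
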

\begin{proof}
Take a formula $x=x\to u$, where $u\in \Phi(X)$.  This formula holds in $H_1$ if and only if $u$ holds in $H_1$. Since $H_1$ and $H_2$ are isotypic, then (Proposition \ref{pr:imp})  $x=x\to u$ holds in $H_1$ if and only if it holds in $H_2$. So if $u$ belongs to the elementary theory of $H_1$, then it belongs to the elementary theory of $H_2$ and vice versa.

\end{proof}

\subsection{Categories of algebraic and definable sets for a given algebra $H$}


Recall that we introduced (Section \ref{sub:hal})  the  category of affine spaces
$\Theta^*(H)$. It is natural to assume that $Var(H)=\Theta$. If this condition does not hold, the situation when for two different morphisms $s_1: W(Y)\to W(X)$ and $s_2: W(Y)\to W(X)$ the corresponding morphisms $\widetilde s_1$ and $\widetilde s_2$ in $\Theta^*(H)$ coincide , is possible. This breaks duality between $\Theta^0$ and $\Theta^*$ (Proposition \ref{prop:dual}) and, as we will see, leads to a lot of other disadvantages. The condition $Var(H)=\Theta$
 plays also a crucial role in the problem of sameness of geometries over different algebras.



Define now a category of algebraic sets $AG_\Theta(H)$ and a
category of definable sets $LG_\Theta(H)$.

Define first a category  $Set_\Theta(H)$. Its objects are pairs $(X,A)$ with $A$ a subset in $Hom(W(X),H)$ and $X \in \Gamma$.

A morphism $s_*$ takes $(X, A)$ to $(Y, B)$, where $s: W(Y)\to
W(X)$ and $B$  contains the points $\nu: W(Y)\to H$ such that
$\nu=\mu s$, for $\mu\in A$.

Now, $AG_\Theta(H)$ is a full subcategory in $Set_\Theta(H)$,
whose objects are pairs $(X,A)$, where $A$ is an algebraic set.

If for $A$ we take definable sets, then we have the category
$LG_\Theta(H)$ which is a full subcategory in $Set_\Theta(H)$.


Two key results are as follows (see, for example,  \cite{Pl-St}, \cite{Plotkin_SomeResultsUAG}).



\begin{theorem}\label{th:agc} {
If $H_1$ and $H_2$ are AG-equivalent, then the categories $AG_\Theta(H_1)$ and  $AG_\Theta(H_2)$ are isomorphic.}
\end{theorem}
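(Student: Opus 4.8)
The plan is to reduce the whole statement to the language of $H$-closed congruences in the free algebras $W(X)$, where AG-equivalence acts transparently. Recall the Galois correspondence of the equational geometry: for an algebraic set $A \subseteq Hom(W(X),H)$ the congruence $A'_H = \bigcap_{\mu\in A}Ker(\mu)$ is $H$-closed, the assignment $A \mapsto A'_H$ is a bijection between algebraic sets of sort $X$ over $H$ and $H$-closed congruences in $W(X)$, with inverse $T \mapsto T'_H$ (the closed sets on both sides satisfy $A=A''_H$, $T=T''_H$). By Definition \ref{def:AG}, $H_1$ and $H_2$ being AG-equivalent means $T''_{H_1}=T''_{H_2}$ for every $T$ and every $X\in\Gamma$; since a congruence $T$ is $H_i$-closed precisely when $T=T''_{H_i}$, the $H_1$-closed congruences in each $W(X)$ coincide with the $H_2$-closed ones. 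Composing the two Galois bijections therefore yields, for every $X \in \Gamma$, a bijection between the objects of $AG_\Theta(H_1)$ and those of $AG_\Theta(H_2)$: an object $(X,A_1)$ is sent to $(X, A_2)$, where $A_2 = ((A_1)'_{H_1})'_{H_2}$ is the algebraic set over $H_2$ with the same closed congruence.

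Next I would express morphisms congruence-theoretically, so that they too become independent of $H$. Writing $T_A = A'_H$ and $T_B=B'_H$, a homomorphism $s: W(Y) \to W(X)$ induces a morphism $s_\ast : (X,A) \to (Y,B)$ precisely when $\widetilde s(A)\subseteq B$; since $B$ is closed this is equivalent to $T_B \subseteq (\widetilde s(A))'_H$, and a direct computation using $(w,w')\in Ker(\mu s)\iff (s(w),s(w'))\in Ker(\mu)$ gives $(\widetilde s(A))'_H = \{(w,w') : (s(w),s(w'))\in T_A\}$. Hence $s$ induces a morphism if and only if $(s(w),s(w'))\in T_A$ for every $(w,w')\in T_B$, a condition involving only $s$, $T_A$ and $T_B$. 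The same computation shows that two homomorphisms $s_1,s_2: W(Y)\to W(X)$ induce the \emph{same} morphism $(X,A)\to(Y,B)$ --- that is $\widetilde{s_1}|_A = \widetilde{s_2}|_A$ --- exactly when $(s_1(y),s_2(y))\in T_A$ for all $y \in Y$. Both the existence criterion and the identification of parallel arrows thus depend only on the closed congruences $T_A, T_B$, which are common to $H_1$ and $H_2$.

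Finally I would assemble this into a functor $F: AG_\Theta(H_1)\to AG_\Theta(H_2)$ acting on objects as above and sending a morphism represented by $s$ to the morphism over $H_2$ represented by the same $s$; by the previous paragraph this is well defined on arrows (a representative valid over $H_1$ is valid over $H_2$, and two $H_1$-equal representatives are $H_2$-equal) and is a bijection on each hom-set. Functoriality --- preservation of identities and of composition --- is immediate, since in both categories composition of $s_\ast$-morphisms is governed by composition of the underlying $s$ in $\Theta^0$, on which $F$ is the identity. The symmetric construction gives $G: AG_\Theta(H_2)\to AG_\Theta(H_1)$ with $GF$ and $FG$ the identity functors, so $F$ is an isomorphism of categories. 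The one genuinely delicate point --- the step I would treat most carefully --- is the treatment of morphisms as the actual induced maps rather than as the symbols $s$: because the theorem does not assume $Var(H)=\Theta$, distinct $s$ may collapse to a single arrow (cf. Proposition \ref{prop:dual}), so one must verify that the collapsing relation is the same over $H_1$ and $H_2$. This is exactly where AG-equivalence is used in full strength, through the equality of closed congruences; mere coincidence of the generated quasivarieties would not suffice.
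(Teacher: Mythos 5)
Your proof is correct. Note, however, that the paper itself does not actually prove Theorem \ref{th:agc}: it is quoted as a known result from \cite{Pl-St} and \cite{Plotkin_SomeResultsUAG}, so there is no in-paper argument to compare against step by step. What the paper does supply is machinery pointing to essentially your route: the category $C_\Theta(H)$ of $H$-closed congruences, the diagram $(\lozenge)$, and Proposition \ref{pr:duali}, which asserts that $C_\Theta(H)$ is anti-isomorphic to $AG_\Theta(H)$; since AG-equivalence says precisely that the $H_1$-closed and $H_2$-closed congruences coincide, that proposition formally yields $AG_\Theta(H_1)\cong AG_\Theta(H_2)$. The genuine difference is that Proposition \ref{pr:duali} carries the hypothesis $Var(H)=\Theta$, which Theorem \ref{th:agc} does not assume, so the route through the paper's duality is incomplete in exactly the spot you flag. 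Your argument closes that gap: instead of invoking duality, you characterize, purely in terms of the common closed congruences, both when a homomorphism $s$ induces an arrow $(X,A)\to(Y,B)$ (namely $(s(w),s(w'))\in T_A$ for all $(w,w')\in T_B$) and when two homomorphisms induce the \emph{same} arrow (namely $(s_1(y),s_2(y))\in T_A$ for all $y\in Y$), so that the collapsing of representatives is visibly identical over $H_1$ and $H_2$. This makes your proof strictly more self-contained than the paper's implicit one, and it is the correct way to handle the theorem as stated, i.e.\ without the assumption $Var(H_i)=\Theta$.
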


\begin{theorem}\label{th:agl} {
If $H_1$ and $H_2$ are $LG$-equivalent, then the categories
$LG_\Theta(H_1)$ and  $LG_\Theta(H_2)$ are isomorphic.}
\end{theorem}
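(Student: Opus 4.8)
The plan is to build an explicit functor $F\colon LG_\Theta(H_1)\to LG_\Theta(H_2)$ that leaves the ``syntactic'' data untouched and translates only the geometry. The starting observation is that the Galois correspondence $A\mapsto A^L_H$, $T\mapsto T^L_H$ restricts to a bijection between the definable sets in $Hom(W(X),H)$ and the $H$-closed filters in $\Phi(X)$; hence an object $(X,A)$ of $LG_\Theta(H)$ is the same datum as a sort $X\in\Gamma$ together with an $H$-closed filter $T=A^L_H$. By Theorem~\ref{thm:lgiso} (and the argument inside its proof), LG-equivalence of $H_1$ and $H_2$ means precisely that, for every $X\in\Gamma$, the $H_1$-closed filters in $\Phi(X)$ coincide with the $H_2$-closed filters. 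I would therefore define $F$ on objects by routing through the common filter: given a definable $A_1\subset Hom(W(X),H_1)$, set $T=A_1{}^L_{H_1}$ and put $F(X,A_1)=(X,A_2)$ with $A_2=T^L_{H_2}$, which is definable over $H_2$ and again has associated filter $T$.

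The crux is to transport morphisms, and here I would prove the $H$-independence of the morphism condition. Recall a morphism $(X,A)\to(Y,B)$ of $Set_\Theta(H)$ is induced by a homomorphism $t\colon W(Y)\to W(X)$ subject to $\tilde t(A)\subseteq B$. Writing $T_A=A^L_H$ and $T_B=B^L_H$, I would reformulate this as a condition on filters alone. Since $B$ is definable, $B=\bigcap_{v\in T_B}Val^Y_H(v)$, so $\tilde t(A)\subseteq B$ is equivalent to $A\subseteq\tilde t^{-1}(B)=\bigcap_{v\in T_B}\tilde t^{-1}\bigl(Val^Y_H(v)\bigr)$. Applying the syntax--semantics commutative square of Subsection~\ref{sub:val} in the form $Val^X_H(t_\ast v)=\tilde t^{-1}\bigl(Val^Y_H(v)\bigr)$ turns this into $A\subseteq Val^X_H(t_\ast v)$ for all $v\in T_B$, that is, $t_\ast v\in A^L_H=T_A$ for all $v\in T_B$. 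Thus $\tilde t(A)\subseteq B$ holds if and only if $t_\ast(T_B)\subseteq T_A$. The right-hand condition involves only the syntactic map $t_\ast\colon\Phi(Y)\to\Phi(X)$ and the filters $T_A,T_B$, and so does not mention $H$ at all. This is the step I expect to be the main obstacle, since it is where the purely geometric containment must be distilled into an algebra-independent statement about $\widetilde\Phi$.

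With this reformulation in hand the rest is formal. Because $F$ preserves the associated filter, and because $t_\ast(T_B)\subseteq T_A$ is the same requirement over $H_1$ and over $H_2$, a morphism induced by $t$ exists from $(X,A_1)$ to $(Y,B_1)$ in $LG_\Theta(H_1)$ exactly when the morphism induced by the same $t$ exists from $(X,A_2)$ to $(Y,B_2)$ in $LG_\Theta(H_2)$. Defining $F$ to send such a morphism to the one induced by the same $t$, functoriality (identities and composition) is immediate, as the underlying homomorphisms are unchanged. The symmetric construction gives $G\colon LG_\Theta(H_2)\to LG_\Theta(H_1)$, and $GF$, $FG$ are the identities because $A_1$ and $A_2$ are Galois-closed and the correspondence is symmetric in $H_1,H_2$; hence $F$ is an isomorphism of categories.

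One small point to address along the way is that a morphism is the \emph{induced} map rather than the homomorphism $t$ itself, so distinct $t$ may give equal morphisms when $Var(H)\neq\Theta$. This causes no trouble: LG-equivalent algebras are elementarily equivalent, so $H_1$ and $H_2$ satisfy the same identities, and therefore $\tilde t_1=\tilde t_2$ over $H_1$ if and only if $\tilde t_1=\tilde t_2$ over $H_2$. The identification of morphisms is thus consistent on both sides, and the bijection on hom-sets survives the passage to induced maps, completing the verification that $F$ is an isomorphism.
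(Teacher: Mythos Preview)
Your argument is correct and is precisely the route the paper's own machinery is designed to support. The paper does not give an in-text proof of this theorem (it refers to \cite{Pl-St}, \cite{Plotkin_SomeResultsUAG}), but everything you use is set up explicitly in the paper: the Galois bijection between definable sets and $H$-closed filters, the observation inside the proof of Theorem~\ref{thm:lgiso} that LG-equivalence forces the $H_1$-closed and $H_2$-closed filters in each $\Phi(X)$ to coincide (this in fact follows straight from Definition~\ref{def:lgeq}, so you need not invoke the full isotypicity theorem), and the syntax--semantics square $Val^X_H(t_\ast v)=\tilde t^{-1}\bigl(Val^Y_H(v)\bigr)$ from Subsection~\ref{sub:val}. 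Your reformulation $\tilde t(A)\subseteq B\iff t_\ast(T_B)\subseteq T_A$ is exactly the content of the commutative diagram~$(\diamondsuit\diamondsuit)$ and of Proposition~\ref{pr:duali1}, so your proof is really the duality $LG_\Theta(H)\leftrightarrow F_\Theta(H)$ spelled out at the level of individual objects and morphisms.

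One refinement worth noting: your final paragraph handles the case $\tilde t_1=\tilde t_2$ globally via identities, but the same filter-theoretic translation covers the sharper situation as well. If $\tilde t_1$ and $\tilde t_2$ agree merely on $A_1$, then $(t_1(y)\equiv t_2(y))\in A_1{}^L_{H_1}=T$ for every $y\in Y$, hence also $(t_1(y)\equiv t_2(y))\in T=A_2{}^L_{H_2}$, so $\tilde t_1$ and $\tilde t_2$ agree on $A_2$ too. This makes your functor well-defined even under the stricter reading of morphisms as restricted maps, and avoids the appeal to elementary equivalence.
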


\begin{remark}
In view of Theorem \ref{def:iso}, the geometric notion of
$LG$-equivalence of algebras is equivalent to a model theoretic
notion of isotypic algebras. Thus, if algebras $H_1$ and $H_2$ are
isotypic, then the categories of definable sets $LG_\Theta(H_1)$
and  $LG_\Theta(H_2)$ are isomorphic for every $\Theta$.
\end{remark}

Theorems \ref{th:agc} and \ref{th:agl} provide sufficient conditions for isomorphisms of categories of algebraic and definable sets, respectively. Necessary and
other sufficient conditions
will be considered in \ref{}. 

Beforehand, we shall slightly modify the categories $AG_\Theta(H)$ and $LG_\Theta(H)$.  First of all, modify the definition of the category $AG_\Theta(H)$.
Objects $AG_\Theta^X(H)$ of $AG_\Theta(H)$ are not pairs $(X,A)$, where $A$ is an algebraic set, but systems of all algebraic sets in the space $Hom(W(X),H)$, where $X$ is fixed. Analogously, an object  $LG_\Theta^X(H)$ is the system of all definable sets in the space $Hom(W(X),H)$.

Note that all definable sets under the given $X$ constitute a lattice, while all algebraic sets are just a poset.  So, one can say that objects $AG_\Theta^X(H)$ of $AG_\Theta(H)$ are posets of algebraic sets in $Hom(W(X),H)$, objects $LG_\Theta^X(H)$ of $LG_\Theta(H)$ are lattices of definable sets in the space $Hom(W(X),H)$. By definition, every algebraic set is a definable set.

Morphisms between $AG_\Theta^X(H)$ and $AG_\Theta^Y(H)$, as well as between $LG_\Theta^X(H)$ and $LG_\Theta^Y(H)$  are defined towards the maps $s: W(Y)\to W(X)$. We will describe these morphisms in more detail.


First of all, recall that objects in the categories $\Theta^0$ and $\widetilde\Phi_\Theta$ are free algebras $W(X)$ and algebras of formulas $\Phi(X)$, respectively. Every homomorphism $s:W(Y)\to W(X)$ gives rise to a morphism $s_\ast: \Phi(Y)\to \Phi(X)$. In particular, $s_\ast$ acts on equalities as follows: $s_\ast(w_1\equiv w_2)=(s(w_1)\equiv s(w_2))$ (action of $s_\ast$ is regulated by Definition \ref{def:ha}). Note that equalities of the form $w\equiv w'$, $w$, $w'$ in $W(X)$,  can be treated as formulas in $\Phi(X)$.  This correspondence $s\to s_\ast$ allows us to define morphisms $\widetilde s$ and $\widetilde s_\ast$ in $AG_\Theta(H)$ and $LG_\Theta(H)$.

Given  $s:W(Y)\to W(X)$, a morphism $\widetilde s: AG_\Theta^X(H)\to AG_\Theta^Y(H)$ is defined as follows: 
for an algebraic set $A$ in $AG_\Theta^X(H)$  take all points $\nu$ in $Hom(W(Y),H)$ of the form $\nu=\mu s$, where $\mu\in A$ and define $B=\widetilde s A$ as the algebraic set determined by the set of all such $\nu$. Then the object  $AG_\Theta^Y(H)$ corresponding to $AG_\Theta^X(H)$ contains  all $B$ of such kind.
So, morphisms in $AG_\Theta(H)$ are maps of posets, originated from homomorphisms of free algebras, that is maps of the form $\widetilde s$. Note, that all $\widetilde s$ preserve posets structure by definition.

Analogously, a morphism $\widetilde s_\ast: LG_\Theta^X(H)  \to
LG_\Theta^Y(H)$ is defined as follows: given $A\in LG^X_\Theta(H)$ and $s:W(Y)\to W(X)$, the  set $B=\widetilde s_\ast A$ is the definable set determined by all points $\nu$ of the form $\nu=\mu s$, $\mu \in A$. The object  $LG_\Theta^Y(H)$ corresponding to $LG_\Theta^X(H)$ contains  all $B$ of such form.

Now we define categories of algebras of formulas $C_\Theta(H)$ and $F_\Theta(H)$. Let us start with $C_\Theta(H)$.  If $A\in AG^X_\Theta(H)$, then take $T=A'_H$. This is an $H$-closed congruence on $W(X)$, that is $T_H'=A$. Denote by $C_\Theta^X(H)$ the poset of all such $T$, where $A$ runs $AG^X_\Theta(H)$. These $C_\Theta^X(H)$ are objects of $C_\Theta(H)$. They are in one-to-one correspondence with objects $AG_\Theta^X(H)$. 

 Now about morphisms. Let $s: W(Y)\to W(X)$ be a morphism in $\Theta^0$. Recall that $s_\ast(w_1\equiv w_2)=(s(w_1)\equiv s(w_2))$. Let $T_2$ be an $H$-closed congruence in $C_\Theta^Y(H)$. Define $T_1$ as the $H$-closed congruence in $C_\Theta^X(H)$ determined by the set of all equalities of the form $s_\ast(w\equiv w')$, where
 $w\equiv w'$ in $T_2$. So $T_1=(s_\ast T_2)''$.

 Consider the commutative diagram $(\lozenge)$
$$
\CD
T_2 @> s_\ast >> T_1\\
@V \al_{H}^Y  VV @VV \al_{H}^X V\\
B @< \widetilde{s} << A,
\endCD
$$
where $A'_H=T_1$, $T'_{1H}=A$, $B'_H=T_2$, $T'_{2H}=B$ (follows from Section \ref{sub:val}). Here $T_2$ and $T_1$ are $H$-closed congruences in $W(Y)$ and $W(X)$, respectively. In particular $(\lozenge)$ implies that $s_\ast: C_\Theta^Y(H)\to C_\Theta^X(H)$ is the map of posets.

 This diagram gives rise to the category $C_\Theta(H)$ of all $H$-closed congruences.

 It is important to get another look at the morphisms $s_\ast$ in $C_\Theta(H)$. Let the $H$-closed congruences $T_2$ in $C_\Theta^Y(H)$ and $T_1$ in $C_\Theta^X(H)$ be given. A morphism $s_\ast$
 takes $T_2$ to $T_1$, if and only if $s_\ast$ satisfies the diagram $(\lozenge)$. So, $s_\ast$ assigns $T_1$ to $T_2$ if and only if we have  $(\lozenge)$. Moreover, if one knows $s_\ast$ and $T_1$, then $(\lozenge)$ recovers $T_2$. Thus, one can define the category $C_\Theta(H)^{-1}$, with the same objects as $C_\Theta(H)$ and morphisms acting opposite-wise.





\begin{proposition}\label{pr:duali}
Let $Var(H)=\Theta$. The category $C_\Theta(H)$ of posets of $H$-closed congruences is anti-isomorphic to the category $AG_\Theta(H)$ of posets  of  algebraic sets.
\end{proposition}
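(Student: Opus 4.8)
The plan is to package the Galois correspondence of this section into an explicit contravariant functor $\Psi\colon C_\Theta(H)\to AG_\Theta(H)$ and to verify that it is bijective on objects and on morphisms. On objects I set $\Psi(C_\Theta^X(H))=AG_\Theta^X(H)$, which is forced because both posets carry the same index $X\in\Gamma$. The first substantive step is to record that for each fixed $X$ the prime operations $T\mapsto T'_H$ and $A\mapsto A'_H$ are mutually inverse, order-reversing bijections between the poset $C_\Theta^X(H)$ of $H$-closed congruences and the poset $AG_\Theta^X(H)$ of algebraic sets: one has $T''_H=T$ for $H$-closed $T$, $A''_H=A$ for algebraic $A$, and $T_1\subseteq T_2$ exactly when $(T_1)'_H\supseteq (T_2)'_H$. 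This is the standard Galois correspondence attached to the relation $T\subseteq Ker(\mu)$, and it already makes each object of $C_\Theta(H)$ anti-isomorphic, as a poset, to the matching object of $AG_\Theta(H)$; I write $\al_H^X$ for this bijection, which is exactly the vertical map of the diagram $(\lozenge)$.

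On morphisms I send $s_\ast\mapsto\widetilde s$ for every $s\colon W(Y)\to W(X)$. The essential point is that $s_\ast\colon C_\Theta^Y(H)\to C_\Theta^X(H)$ and $\widetilde s\colon AG_\Theta^X(H)\to AG_\Theta^Y(H)$ run in opposite directions and are intertwined by the Galois maps, which is precisely the content of the commutative diagram $(\lozenge)$: for $T_2\in C_\Theta^Y(H)$ it reads $\widetilde s(\al_H^X(s_\ast(T_2)))=\al_H^Y(T_2)$ (established in Section \ref{sub:val}). Hence $\widetilde s=\Psi(s_\ast)$ is the unique morphism of $AG_\Theta(H)$ making the Galois square commute, so $\Psi$ is well defined on morphisms. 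Contravariant functoriality is then routine: $\Psi(\mathrm{id})=\widetilde{\mathrm{id}}=\mathrm{id}$, and for $s\colon W(Y)\to W(X)$ and $t\colon W(Z)\to W(Y)$ one gets $\Psi(s_\ast\circ t_\ast)=\Psi((st)_\ast)=\widetilde{st}=\widetilde t\circ\widetilde s=\Psi(t_\ast)\circ\Psi(s_\ast)$, using that $s\mapsto s_\ast$ is covariant (Definition \ref{def:ha}(2)) while $s\mapsto\widetilde s$ is contravariant, the latter checked on points via $\mu\mapsto\mu(st)=(\mu s)t$. Thus $\Psi$ reverses the order of composition, i.e.\ it is an anti-homomorphism of categories.

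It then remains to see that $\Psi$ is a bijection on each Hom-set. Surjectivity is immediate, since by construction every morphism of $AG_\Theta(H)$ already has the form $\widetilde s=\Psi(s_\ast)$. For injectivity I must show that $\widetilde s=\widetilde{s'}$ forces $s_\ast=s'_\ast$, and this is exactly where the hypothesis $Var(H)=\Theta$ enters: by Proposition \ref{prop:dual} this hypothesis is equivalent to the duality of $\Theta^0$ and $\Theta^*(H)$, so distinct $s\neq s'$ induce distinct affine-space maps and hence distinct $\widetilde s$; transporting this through the Galois bijections of $(\lozenge)$ gives $\widetilde s=\widetilde{s'}\Rightarrow s=s'\Rightarrow s_\ast=s'_\ast$, and conversely. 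Combining the object correspondence of the first paragraph with the bijective, composition-reversing assignment on morphisms yields the claimed anti-isomorphism.

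I expect the heart of the matter to be this last step, the faithful matching of morphisms, rather than the formal functoriality. The subtlety is that a morphism of $AG_\Theta(H)$ is a map of posets of algebraic sets, not a map of the whole affine space, so one must argue that equality of the induced poset maps already forces $s=s'$; this is precisely the phenomenon flagged just before the proposition, where for $Var(H)\subsetneq\Theta$ two different $s_1,s_2$ can give $\widetilde{s_1}=\widetilde{s_2}$, so the hypothesis cannot be dropped and Proposition \ref{prop:dual} must be invoked with care. A minor point to confirm along the way is the commutativity of $(\lozenge)$ for arbitrary, not merely principal, algebraic sets, but this is granted by the construction of Section \ref{sub:val}.
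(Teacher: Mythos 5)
Your proposal is correct and follows essentially the same route as the paper's (very terse) proof: the object correspondence is the Galois bijection between $H$-closed congruences and algebraic sets, and the hypothesis $Var(H)=\Theta$ is used, via Proposition \ref{prop:dual}, exactly to make the morphism correspondence $s_\ast\to\widetilde s$ one-to-one. Your added detail — the explicit contravariant functor, the check of composition reversal, and the use of the diagram $(\lozenge)$ — merely fleshes out what the paper leaves implicit.
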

\begin{proof}
The correspondence $C_\Theta(H)\to AG_\Theta(H)$ is one-to-one. The condition $Var(H)=\Theta$ provides that the correspondence $s_\ast\to \widetilde s$ is also one-to-one (see Proposition \ref{prop:dual}).
\end{proof}
 The opposite category $C^{-1}(H)$ with morphisms $s_\ast^{-1}: T_1\to T_2$ is isomorphic to $AG_\Theta(H)$.

We shall repeat the similar construction using $L$-Galois correspondence. We have the  diagram $(\diamondsuit\diamondsuit)$ (whose particular case is the  diagram $(\diamondsuit)$ :

$$
\CD
T_2 @> s_\ast >> T_1\\
@V \al_{H}^Y  VV @VV \al_{H}^X V\\
B @< \widetilde{s}_\ast << A,
\endCD
$$
where $A^L_H=T_1$, $T^L_{1H}=A$, $B^L_H=T_2$, $T^L_{2H}=B$. Here $T_2$ and $T_1$ are $H$-closed filters in $\Phi(X)$ and $\Phi(Y)$, respectively. It gives rise to the categories of $H$-closed filters $F_\Theta(H)$ and $F^{-1}_\Theta(H)$. Objects of $F_\Theta(H)$ are lattices of $H$-closed filters $F_\Theta^X(H)$.
Let $F_2$ be an $H$-closed filter in $F_\Theta^Y(H)$. Define $F_1$ as the $H$-closed filter determined by the set of formulas of the form $s_\ast v$, where
 $v$ in $T_2$. So, $F_1=(s_\ast F_2)^{LL}$.

 In other words, let the $H$-closed filters $T_2$ and $T_1$ in $F_\Theta^Y(H)$ and  $F_\Theta^X(H)$, respectively, be given. Take $T_{1H}^L=A$ and $T_{2H}^L=B$. The diagram  $(\diamondsuit\diamondsuit)$  determines when $s_\ast$ takes $T_2$ to $T_1$. In particular, $T_1$ defines uniquely $T_1$ by $T_2=s_\ast^{-1}(T_1)$, that is $T_2$ is the inverse image of $T_1$.

 \begin{proposition}\label{pr:duali1}
Let $Var(H)=\Theta$. The category $F_\Theta(H)$ of lattices of $H$-closed filters is anti-isomorphic to the category $LG_\Theta(H)$ of lattices  of  definable sets.
\end{proposition}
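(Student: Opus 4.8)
The plan is to mirror exactly the proof of Proposition~\ref{pr:duali}, replacing the equational ($'$) Galois correspondence by the logical ($L$) one throughout. The statement asserts two things: that the correspondence on objects between $F_\Theta(H)$ and $LG_\Theta(H)$ is a bijection, and that the correspondence on morphisms is a bijection reversing direction. First I would establish the object-level bijection. For a fixed $X$, sending a definable set $A\in LG_\Theta^X(H)$ to the filter $T=A^L_H$, and conversely a filter $T\in F_\Theta^X(H)$ to the definable set $T^L_H$, are mutually inverse because by the very definition of $H$-closedness we have $T^{LL}_H=T$ for $H$-closed filters and $A^{LL}_H=A$ for definable sets (a definable set is by construction of the form $T^L_H$, hence Galois-closed). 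This gives a one-to-one correspondence $F_\Theta^X(H)\leftrightarrow LG_\Theta^X(H)$ for each sort $X$, and it is order-reversing since the operations $L$ reverse inclusions, so it respects the lattice structure as an anti-isomorphism on objects.

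Next I would treat the morphisms. Given $s:W(Y)\to W(X)$, the diagram $(\diamondsuit\diamondsuit)$ sets up the correspondence between $s_\ast:F_\Theta^Y(H)\to F_\Theta^X(H)$ on filters and $\widetilde s_\ast:LG_\Theta^X(H)\to LG_\Theta^Y(H)$ on definable sets, which go in opposite directions and so produce the anti-isomorphism at the level of morphisms. The key point to verify is that the square $(\diamondsuit\diamondsuit)$ actually commutes, i.e.\ that $\widetilde s_\ast(T_{1H}^L)=(s_\ast T_1)^L_H$ up to the Galois closures, so that the assignment is well defined and compatible with the vertical bijections $\al_H^X,\al_H^Y$. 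This is precisely the content of observation~$2$ following the definition of $Val_H$ in Subsection~\ref{sub:val}, which states that $s_\ast(u)=v$ holds in $\widetilde\Phi$ if and only if $s_\ast(Val^X_H(u))=Val^Y_H(v)$ for every $H$; combined with the commutative diagram relating $s_\ast$ on $\Phi(X)$ with $s_\ast^H$ on $Hal_\Theta(H)$, this transports the syntactic action of $s_\ast$ on filters to the geometric action $\widetilde s_\ast$ on definable sets.

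The hypothesis $Var(H)=\Theta$ enters exactly as in Proposition~\ref{pr:duali}: it guarantees, via Proposition~\ref{prop:dual}, that distinct morphisms $s$ induce distinct $\widetilde s_\ast$, so that the correspondence $s_\ast\to\widetilde s_\ast$ is itself one-to-one and no collapsing of morphisms occurs. Without it, two different $s_1\neq s_2$ could yield the same induced map on definable sets and the morphism-level bijection would fail. I would then check that the correspondence respects composition of morphisms with the expected reversal, using functoriality of $s_\ast$ on $\Phi$ (condition~$(2)$ of Definition~\ref{def:ha}, which gives $s'_\ast s_\ast=(s's)_\ast$) together with the already-established object bijection.

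The main obstacle I anticipate is not the object-level bijection, which is formal once $H$-closedness is unwound, but the verification that diagram $(\diamondsuit\diamondsuit)$ commutes genuinely at the level of $H$-closed filters rather than merely up to closure. One must be careful that $F_1=(s_\ast F_2)^{LL}$ is the correct closed filter and that applying $L$ to it recovers $\widetilde s_\ast A$ exactly; the subtlety is that $s_\ast$ of a filter need not be a filter, so the closure operation $(-)^{LL}$ is essential, and one must confirm that taking closures does not disturb the commutativity of the square. I expect this to reduce, after invoking the residual simplicity of $\widetilde\Phi$ and observation~$2$ of Subsection~\ref{sub:val}, to the same bookkeeping already carried out in the $C_\Theta(H)$ case, so the proof can be stated briefly by analogy.
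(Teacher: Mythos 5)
Your proposal is correct and takes essentially the same route as the paper: the paper handles this proposition exactly as its companion Proposition \ref{pr:duali}, namely the Galois correspondence ($T\mapsto T^L_H$, $A\mapsto A^L_H$, with $T^{LL}_H=T$ and $A^{LL}_H=A$) gives the one-to-one correspondence on objects, and the hypothesis $Var(H)=\Theta$ guarantees via Proposition \ref{prop:dual} that the assignment $s_\ast\to\widetilde s_\ast$ on morphisms is one-to-one and direction-reversing. Your extra verifications (commutativity of $(\diamondsuit\diamondsuit)$ via observation 2 of Subsection \ref{sub:val}, compatibility with composition) merely fill in details the paper leaves implicit.
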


 The category $F^{-1}_\Theta(H)$ is isomorphic to the category of definable sets $LG_\Theta(H)$.

\subsection{Geometric and logical similarity of algebras}\label{sec:lgsim}

\begin{defn}\label{def:gsim} We call algebras $H_1$ and $H_2$ are called geometrically similar if the categories of algebraic sets $AG_\Theta(H_1)$ and $AG_\Theta(H_2)$ are isomorphic.
\end{defn}

Since the categories $AG_\Theta(H)$ and $C_\Theta(H)$ are dual, algebras $H_1$ and $H_2$ are  geometrically similar if and only if the categories $C_\Theta(H_1)$ and $C_\Theta(H_2)$ are isomorphic. In view of Theorem \ref{def:AG}, if algebras $H_1$ and $H_2$ are  geometrically equivalent, then they are geometrically similar.

\begin{defn}\label{def:lsim} We call algebras $H_1$ and $H_2$ are called logically similar, if the categories of definable sets $LG_\Theta(H_1)$ and $LG_\Theta(H_2)$ are isomorphic.
\end{defn}

Algebras $H_1$ and $H_2$ are  logically similar if and only if $F_\Theta(H_1)$ and $F_\Theta(H_2)$ are isomorphic. 


 By Theorem \ref{def:lgeq}  if  $H_1$ and $H_2$ are  logically equivalent, then they are logically similar.


The following problems is our main target:

\begin{problem}\label{pr:2_1}
Find necessary and sufficient conditions on algebras $H_1$ and
$H_2$ in $\Theta$ that provide algebraic similarity of these algebras.
\end{problem}

\begin{problem}\label{pr:3_1}
Find necessary and sufficient conditions on algebras $H_1$ and
$H_2$ in $\Theta$ that provide logical similarity of these algebras.
\end{problem}


 We start with examples of  specific varieties, where necessary and sufficient conditions for isomorphism of the categories of algebraic sets can be formulated solely in terms of properties of algebras $H_1$ and $H_2$. Afterwards we will dwell on a some general approach.
 In what follows, all fields and rings  are assumed to be infinite. 

\begin{theorem}{}\label{th:gsim}
Let $Var(H_1)=Var(H_2)=\Theta.$

\begin{enumerate}

\item Let $\Theta$ be one of the following varieties
\begin{itemize}
\item
$\Theta=Grp$, the variety of groups,
 \item
 $\Theta=Jord$, the variety of Jordan algebras,
 \item  $\Theta=Inv$, the variety of inverse semigroups,
 \item  $\Theta=\frak N_d$, the variety of nilpotent groups of class $d$.
 \end{itemize}
 Categories $AG_\Theta(H_1)$ and $AG_\Theta(H_2)$ are
isomorphic if and only if the algebras $H_1$ and $H_2$ are
geometrically equivalent (see \cite{Formanek}) \cite{MSZ}, \cite{Ts4},
\cite{Ts2}).

\item Let $\Theta=Com-P$ or $Lie-P$
and $\sigma \in Aut (P)$.  Define a new algebra $H^\sigma$.   In
$H^\sigma$ the multiplication on a scalar $\circ$ is defined
through the multiplication in $H$ by the rule:
\[
\lambda\circ a=\lambda^\sigma 
\cdot a, \;\; \;   \lambda \in P, \;\; \;  a \in H.
\]
\noindent
Categories $AG_\Theta(H_1)$ and $AG_\Theta(H_2)$ are
isomorphic if and only for some $\sigma\in Aut(P)$ the algebras
$H^{\sigma}_{1}$ and $H_2$ are geometrically equivalent (see \cite{Berzins_GeomEquiv},\cite{Pl-St}, \cite{MPP1},\cite{MPP2}, \cite{L}, \cite{S}).

\item Let $\Theta=Ass-P$. Denote by $H^{*}$ the algebra with the multiplication $*$
defined as follows: $a*b=b\cdot a$. The algebra $H^{*}$ is called
opposite to $H$. The categories $AG_{\Theta}(H_1)$ and $AG_{\Theta}(H_2)$ are
 isomorphic if and only if for some $\sigma\in Aut(P)$
the algebras
 $(H_1^*)^\sigma$ and $H_2$ are geometrically equivalent, where $(H_1^*)^\sigma$ is opposite to either  $H_1$ or to $H_1^*$ ( \cite{BBL}, \cite{Pl-St}).

\end{enumerate}

\end{theorem}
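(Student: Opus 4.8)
The plan is to deduce all three statements from a single general principle of universal algebraic geometry: the equivalence between \emph{geometric similarity} (Definition \ref{def:gsim}) and a controlled weakening of \emph{geometric equivalence} (Definition \ref{def:AG}) that permits a twist by an automorphism of the category $\Theta^0$. The easy (sufficiency) directions will come from Theorem \ref{th:agc}, while the hard (necessity) directions will come from analysing the automorphism group $Aut(\Theta^0)$ for each concrete variety.

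For the sufficiency directions I would argue as follows. In case (1) there is nothing to twist: if $H_1$ and $H_2$ are geometrically equivalent, then Theorem \ref{th:agc} gives an isomorphism $AG_\Theta(H_1)\cong AG_\Theta(H_2)$ at once. In cases (2) and (3) I would first observe that each twisting operation is induced by an automorphism of $\Theta^0$: the scalar twist $H\mapsto H^\sigma$ comes from the automorphism of the free algebras acting on $P$ through $\sigma\in Aut(P)$, and the opposite algebra $H\mapsto H^*$ comes from the canonical anti-automorphism $a\cdot b\mapsto b\cdot a$ of free associative algebras. Each such automorphism transports $H$-closed congruences to congruences closed over the twisted algebra, and hence yields an isomorphism $AG_\Theta(H)\cong AG_\Theta(H^\sigma)$, respectively $AG_\Theta(H)\cong AG_\Theta(H^*)$. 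Composing this with the isomorphism supplied by Theorem \ref{th:agc} applied to the geometrically equivalent pair (e.g. $H_1^\sigma$ and $H_2$) produces $AG_\Theta(H_1)\cong AG_\Theta(H_2)$.

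For the necessity directions, which carry the real content, I would invoke the reduction theorem of universal algebraic geometry: $AG_\Theta(H_1)\cong AG_\Theta(H_2)$ holds if and only if $H_1$ and $H_2$ are \emph{automorphically equivalent}, i.e. there is an automorphism $\varphi\in Aut(\Theta^0)$ carrying the system of $H_1$-closed congruences onto the system of $H_2$-closed congruences compatibly with all morphisms $s_\ast$. Under the hypothesis $Var(H_1)=Var(H_2)=\Theta$, Proposition \ref{pr:duali} lets one transport the given isomorphism of the categories $AG_\Theta(H_i)$ into an isomorphism of the congruence categories $C_\Theta(H_i)$, on which automorphisms of $\Theta^0$ act, and this is what produces the implementing $\varphi$. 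The next step is to show that every such $\varphi$ is \emph{semi-inner}, i.e. factors as an inner automorphism (a functor isomorphic to the identity, corresponding exactly to geometric equivalence) composed with a \emph{stable} automorphism whose effect on an algebra is one of the admissible twists. For groups, Jordan algebras, inverse semigroups and nilpotent groups of class $d$ every automorphism of $\Theta^0$ is in fact inner, so no twist occurs and automorphic equivalence collapses to geometric equivalence, giving case (1) (\cite{Formanek}, \cite{MSZ}, \cite{Ts4}, \cite{Ts2}). For $Com-P$ and $Lie-P$ the stable part is exactly a scalar twist by some $\sigma\in Aut(P)$, so pulling it out converts automorphic equivalence of $H_1,H_2$ into geometric equivalence of $H_1^\sigma$ and $H_2$, which is case (2) (\cite{Berzins_GeomEquiv}, \cite{MPP1}, \cite{MPP2}). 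For $Ass-P$ the stable part may in addition involve the anti-automorphism, producing the opposite algebra and hence case (3) (\cite{BBL}).

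The hard part will be the explicit determination of $Aut(\Theta^0)$ for each concrete variety — equivalently, the proof that every automorphism is semi-inner and that its stable part is no more than a scalar twist by $Aut(P)$ and, in the associative case, the opposite operation. This "no exotic automorphisms" statement is genuinely variety-dependent and is precisely what the quoted papers establish, typically through a careful analysis of how an automorphism must act on free generators and on the endomorphism monoids $End\,W(X)$. By contrast, the reduction machinery of the first two steps is formal and uniform across all the cases.
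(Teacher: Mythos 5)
Your proposal is correct and follows essentially the same route as the paper: the paper itself does not write out a proof but points to exactly this skeleton, namely sufficiency via Theorem \ref{th:agc} (plus twist-induced isomorphisms), and necessity via the Reduction Theorem to automorphisms of $\Theta^0$ (equivalently of $End(W(X))$) together with Proposition \ref{prop:eq} and the variety-by-variety results of the cited papers showing that all automorphisms are inner or semi-inner with only the admissible twists. The one point worth noting is that the literature results you invoke are stated for \emph{correct} (structured) isomorphisms of the categories of algebraic sets, so the "if and only if" form of your reduction step carries that implicit qualification, just as the paper's own statement does.
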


\begin{remark} The list of varieties of Theorem \ref{th:gsim} is not complete. The similar results are known for the varieties of semigroups \cite{MS},  linear algebras \cite{Ts3},\cite{S}, power associative algebras, alternative algebras, \cite{Ts4}, non-commutative non-associative algebras, commutative non-associative algebras, color Lie superalgebras, Lie p-algebras , color Lie p-superalgebras, Poisson algebras \cite{S}, of free $R$-modules \cite{KLP}, Nielsen-Schreier varieties \cite{S},  and for the varieties of some classes of representations \cite{PZ2}, \cite{TP}, \cite{Ts5}.
\end{remark}

We will  make some preparations, basing on the idea of isomorphism of functors.  


\begin{defn}\label{def:isom}
Let $\varphi_1, \varphi_2$ be two functors from a category $C_1$
to $C_2$. We say that an isomorphism of functors $S:\varphi_1 \to \varphi_2$  is defined if for any  morphism   $\nu: A\to B$ in $C_1$  the following commutative
diagram takes place

$$
\CD
\varphi_1(A) @> S_{A} >> \varphi_2(A)\\
@V \varphi_1(\nu)  VV @VV \varphi_2(\nu) V\\
\varphi_1(B) @>S_{B} >> \varphi_2(B).
\endCD
$$
Here $S_A$ is the  $A$-component of $S$, that is, a function which makes a bijective correspondence
 between $\varphi_1(A)$ and $\varphi_2(A)$.  The same is valid for $S_B$.
\end{defn}

Note that $S_A$ and $S_B$ are not necessarily morphisms in $C_2$. Thus, this definition is different from the standard one, where all $S_A$ have to be morphisms in $C_2$.  The commutative diagram above can be reformulated as
$$
\varphi_1(\nu)=S^{-1}_{B}\varphi_2(\nu)S_{A}, \quad \varphi_2(\nu)=S_{B}\varphi_1(\nu)S^{-1}_{A}.
$$




An invertible functor from a category to itself is an automorphism of a category. The notion of isomorphism of functors gives rise to the notion of the inner automorphism of a category.  An automorphism $\varphi$ of the category $C$  is called inner (see \cite{Pl-St}) if $\varphi $ is isomorphic to the identity functor $1_C$. This provides the commutative diagram
$$
\CD
A @> s_{A} >> \varphi(A)\\
@V \nu  VV @VV \varphi(\nu) V\\
B @>s_{B} >> \varphi(B),
\endCD
$$
that is $\varphi(\nu)=s_{B}\nu s^{-1}_{A}.
$


The following Proposition plays an important role in the proof of Theorem \ref{th:gsim}:

\begin{proposition}[\cite{Pl-Sib}]\label{prop:eq} If for the variety $\Theta$ every automorphism of the category $\Theta^0$ is inner,
then  two algebras $H_1$ and $H_2$ are geometrically similar if and only if they are geometrically equivalent.
\end{proposition}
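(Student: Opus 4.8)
The implication \emph{geometrically equivalent $\Rightarrow$ geometrically similar} carries none of the weight of the hypothesis: it is exactly Theorem~\ref{th:agc} read through Definition~\ref{def:gsim}, and was already recorded above. So I would spend the whole proof on the converse, \emph{geometrically similar $\Rightarrow$ geometrically equivalent}, where the assumption that every automorphism of $\Theta^0$ is inner must do its work.

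The plan is to start from an isomorphism of categories $F\colon AG_\Theta(H_1)\to AG_\Theta(H_2)$ and to read off from it an automorphism of the base $\Theta^0$. Both categories are organized over the same index set $\Gamma$: an object $AG_\Theta^X(H_i)$ is the poset of algebraic sets in $Hom(W(X),H_i)$, and the morphisms are exactly the maps $\widetilde s$ coming from $s\colon W(Y)\to W(X)$ in $\Theta^0$. Since $Var(H_i)=\Theta$, Proposition~\ref{prop:dual} makes $s\mapsto\widetilde s$ a bijection between $\Theta^0$-morphisms and the morphisms of $AG_\Theta(H_i)$, so on the level of objects-and-morphisms each $AG_\Theta(H_i)$ is a copy of $\Theta^0$, the distinguishing data being the poset attached to each object. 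Consequently $F$ produces a bijection of indices $X\mapsto X^\ast$, a system of poset isomorphisms $F_X\colon AG_\Theta^X(H_1)\to AG_\Theta^{X^\ast}(H_2)$, and—through $s\mapsto\widetilde s$—a composition-respecting bijection on morphisms. The latter is precisely an automorphism $\varphi$ of $\Theta^0$. (This is the step at which $Var(H_i)=\Theta$ is indispensable: without it the morphisms of $\Theta^0$ are not faithfully seen in $AG_\Theta(H_i)$ and the base cannot be reconstructed.)

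Now I would invoke the hypothesis. By assumption $\varphi$ is inner, so, applying Definition~\ref{def:isom} to $1_{\Theta^0}$ and $\varphi$, there is a family of isomorphisms $\sigma_X\colon W(X)\to\varphi(W(X))$ conjugating every morphism into its $\varphi$-image. Each $\sigma_X$ induces via $\widetilde{\,\cdot\,}$ a poset isomorphism of the corresponding objects of $AG_\Theta(H_2)$, and these assemble into an automorphism $\Psi$ of $AG_\Theta(H_2)$ realizing $\varphi$ on the base. Replacing $F$ by $G=\Psi^{-1}\circ F$ yields an isomorphism $AG_\Theta(H_1)\to AG_\Theta(H_2)$ that covers the identity of $\Theta^0$: it consists of poset isomorphisms $G_X\colon AG_\Theta^X(H_1)\to AG_\Theta^X(H_2)$ for the \emph{same} $X$, compatible with every $\widetilde s$. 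Passing through the anti-isomorphism of Proposition~\ref{pr:duali}, $G$ translates into poset isomorphisms between the $H_1$-closed and the $H_2$-closed congruences on each $W(X)$ that commute with all substitution maps $s_\ast$. To finish I would argue that such a base-preserving $G$ must fix the elementary algebraic sets $\{w\equiv w'\}'_{H}$ cut out by single equalities; granting this, since $G_X$ is a poset isomorphism it preserves arbitrary intersections, every algebraic set is an intersection of elementary ones, and the closed congruence attached to $A$ is recovered as $\{(w,w'):A\subseteq\{w\equiv w'\}'_H\}$. Hence the $H_1$-closed and $H_2$-closed congruences coincide, i.e. $T''_{H_1}=T''_{H_2}$ for every $T$, which is geometric equivalence.

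The hard part will be exactly this last reduction: upgrading an abstract poset isomorphism that merely covers $\mathrm{id}_{\Theta^0}$ to the literal coincidence of closed congruences. The delicate point is to characterize the single-equality sets $\{w\equiv w'\}'_H$ categorically—the natural candidate is to relate them to the diagonal $\{y_1\equiv y_2\}'_H$ in $Hom(W(\{y_1,y_2\}),H)$ along the substitution $y_1\mapsto w,\ y_2\mapsto w'$—and then to show that commuting with all the maps $s_\ast$ (equivalently all $\widetilde s$, including the point substitutions $s^{x}_{w}$ of Definition~\ref{def:ha}) is rigid enough to pin down individual pairs $(w,w')$. A secondary technical check is that the inner family $\sigma_X$ genuinely lifts to the automorphism $\Psi$ of $AG_\Theta(H_2)$ compatibly with the morphisms $\widetilde s$ as constructed above.
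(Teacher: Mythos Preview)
The paper does not prove this proposition; it is stated with a citation to \cite{Pl-Sib} and then used as input to Theorem~\ref{th:gsim}. So there is no in-paper argument to compare against directly.

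Your overall strategy---extract from the category isomorphism an automorphism $\varphi$ of $\Theta^0$, use innerness to straighten $\varphi$ to the identity, then argue that a base-preserving isomorphism forces coincidence of closed congruences---is the standard route in the cited literature, and the step you flag as the crux really is the crux. Your idea of pinning down the elementary sets $\{w\equiv w'\}'_H$ via the diagonal in $Hom(W(\{y_1,y_2\}),H)$ pulled back along the substitution $y_1\mapsto w$, $y_2\mapsto w'$ is exactly how it is done.

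One point deserves more care than you give it. In the paper's modified definition of $AG_\Theta(H)$ the objects are entire posets $AG_\Theta^X(H)$ and the morphisms are maps $\widetilde s$ between them; an abstract isomorphism of such a category sends objects to objects and morphisms to morphisms, but it does \emph{not} automatically furnish poset isomorphisms $F_X$ on the elements of the objects. You silently assume these $F_X$ exist. With the earlier definition (objects are pairs $(X,A)$) the bijection on objects does give you the map on algebraic sets directly, and in the source literature one typically either works with that version or restricts to ``correct'' isomorphisms that by definition come equipped with the base automorphism and the componentwise bijections. You should say explicitly which definition of $AG_\Theta(H)$ you adopt and how the poset-level maps $G_X$ are produced before running the diagonal argument; otherwise the passage from ``$G$ covers $\mathrm{id}_{\Theta^0}$'' to ``$G_X$ is a poset isomorphism commuting with every $\widetilde s$'' is a gap.
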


So, studying automorphisms of $\Theta^0$ play a crucial role in Problem \ref{pr:2_1} related to geometric similarity of algebras.  The latter problem is reduced by Reduction Theorem  to studying the group $Aut(End(W(X))$ of automorphisms of the endomorphism semigroup of a free algebra (see \cite{Pl-St}, \cite{KLP}, \cite{MPP2}).



Now we will treat the general problem using the Galois-closure functors.

For every algebra $H\in\Theta$ consider two functors
$$Cl^A_H:\Theta^0\to PoSet,$$
$$Cl^L_H:\widetilde \Phi_\Theta\to Lat,$$
where $A$ and $L$ stand for the functors of algebraic  and logical closures, respectively. We will suppress these indexes in the sequel, assuming that the type of $Cl$-functor is clear in each particular case.

In fact, $PoSet$ is the category $C_\Theta(H)$ of partially ordered sets of $H$-closed congruences $C^X_\Theta(H)$, while $Lat$ is the category $F_\Theta(H)$ of lattices of $H$-closed filters $F_\Theta^X(H)$.

So, $Cl_H$ assigns to every object $W(X)$ in $\Theta^0$ the poset $C_\Theta^X(H)$ of all $H$-closed congruences on $W(X)$. If $s:W(Y)\to W(X)$ is a morphism in $\Theta^0$,  then $Cl_H(s)=s_\ast: C_\Theta^Y(H)\to C_\Theta^X(H)\to$ is a morphism in $C_\Theta(H)$.

Analogously, in case of $\widetilde \Phi_\Theta\to Lat$, every $s:W(Y)\to W(X)$ gives rise to
$$
s_\ast:\Phi(Y)\to \Phi(X),
$$
and for $T_2\subset \Phi(Y)$, $T_1\subset \Phi(X)$ define $
s_\ast: T_2\to T_1
$
by taking all $v\in T_2$ such that $s_\ast v=u\in T_1$. Using  $(\diamondsuit\diamondsuit)$ we extend $s_\ast$ to
$$
s_\ast: Cl_H(T_2)\to Cl_H(T_1)
$$
The correspondence $s\to s_\ast$ gives rise to the contravariant $Cl_H$  functor $\widetilde \Phi\to F_\Theta(H)$.






Apply these notions to  Problem \ref{pr:2_1} and Problem \ref{pr:3_1}.
Consider two commutative diagrams:

$$
\CD
\Theta^{0} @[2]> \varphi  >> \Theta^{0}\\
 @[2]/SE/ Cl_{H_1} //@.@.\;    @/SW// Cl_{H_2} /\\
 @. PoSet
 \endCD
 \qquad\qquad
 \CD \tilde\Phi @[2]>  \varphi >> \tilde\Phi \\
@[2]/SE/ Cl_{H_1} // @.@. \; @/SW//  Cl_{H_2} / \\
@. Lat
 \endCD
$$
\smallskip
where $\varphi$ is an automorphism of $\Theta^0$ or $\widetilde \Phi$. Commutativity of these diagrams means that there exists an
isomorphism of  functors
$$
s=s(\varphi):Cl_{H_1} \to Cl_{H_2} \cdot \varphi.
$$


In its turn, this isomorphism of functors means that the diagram

 $$
\CD
Cl_{H_1}(W(Y)) @> s_{W(Y)} >> Cl_{H_2}(\varphi(W(Y))) \\
 @V Cl_{H_1}(\nu) VV @VV Cl_{H_2}\varphi(\nu) V\\
 Cl_{H_1}(W(X)) @> s_{W(X)} >> Cl_{H_2}(\varphi(W(X))),
\endCD
$$

and the diagram

$$
\CD
Cl_{H_1}(\Phi(Y)) @> s_{\Phi(Y)} >> Cl_{H_2}(\varphi(\Phi(Y))) \\
 @V Cl_{H_1}(\nu) VV @VV Cl_{H_2}\varphi(\nu) V\\
 Cl_{H_1}(\Phi(X)) @> s_{\Phi(X)} >> Cl_{H_2}(\varphi(\Phi(X))),
\endCD
$$
are commutative.

\begin{theorem}\label{th:gqqsim}
Let $Var(H_1)=Var(H_2)=\Theta.$
Suppose that the geometric functors $Cl_{H_1}$ and $Cl_{H_2}\varphi$ are isomorphic by
$$
s=s(\varphi):Cl_{H_1} \to Cl_{H_2} \cdot \varphi,
$$
where the automorphism of categories $\varphi:\Theta^0\to \Theta^0$ is coordinated with the lattice structures of close congruences by additional conditions (see \cite{Pl-Sib}, \cite{Pl-Zapiski}, \cite{Pl-St}). Then the algebras $H_1$ and $H_2$ are geometrically similar.
\end{theorem}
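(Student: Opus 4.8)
The plan is to reduce geometric similarity to an isomorphism of the categories of $H$-closed congruences, and then to build that isomorphism explicitly by conjugating with the components of the given functor isomorphism. By Proposition \ref{pr:duali}, since $Var(H_1)=Var(H_2)=\Theta$, each category $AG_\Theta(H_i)$ is anti-isomorphic to $C_\Theta(H_i)$; hence, by Definition \ref{def:gsim}, the algebras $H_1$ and $H_2$ are geometrically similar as soon as we produce an isomorphism of categories $\Psi\colon C_\Theta(H_1)\to C_\Theta(H_2)$. Recall that $C_\Theta(H_i)$ is precisely the image of $Cl_{H_i}\colon\Theta^0\to PoSet$: every object is some $C_\Theta^X(H_i)=Cl_{H_i}(W(X))$ and every morphism has the form $Cl_{H_i}(\nu)=\nu_\ast$ for a homomorphism $\nu$ of free algebras.

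On objects I would set $\Psi\bigl(C_\Theta^X(H_1)\bigr)=C_\Theta^{X'}(H_2)$, where $W(X')=\varphi(W(X))$; this is legitimate because $\varphi$ is an automorphism of $\Theta^0$ and therefore permutes the free algebras $W(X)$, so $Cl_{H_2}(\varphi(W(X)))$ is again an object of $C_\Theta(H_2)$ and the assignment on objects is a bijection. On morphisms I would use the components of the functor isomorphism $s=s(\varphi)$: for a morphism $m\colon C_\Theta^Y(H_1)\to C_\Theta^X(H_1)$ define
$$
\Psi(m)=s_{W(X)}\circ m\circ s_{W(Y)}^{-1}.
$$
Since each $s_{W(X)}$ is a bijection (Definition \ref{def:isom}), this formula depends only on $m$ and the fixed data, so $\Psi(m)$ is automatically well defined even though $m$ may be written as $\nu_\ast$ for several $\nu$. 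The crucial point is that the conjugate is an \emph{honest} morphism of $C_\Theta(H_2)$: choosing any $\nu$ with $m=Cl_{H_1}(\nu)$, the commutative square expressing the isomorphism $Cl_{H_1}\to Cl_{H_2}\cdot\varphi$ gives exactly
$$
s_{W(X)}\circ Cl_{H_1}(\nu)\circ s_{W(Y)}^{-1}=Cl_{H_2}(\varphi(\nu)),
$$
which lies in $C_\Theta(H_2)$. Functoriality of $\Psi$ is then immediate, since conjugation by fixed bijections preserves identities and composition, and $\Psi$ is invertible because each $s_{W(X)}$ is invertible and $\varphi$ is an automorphism: through the realization $\Psi(Cl_{H_1}(\nu))=Cl_{H_2}(\varphi(\nu))$ together with surjectivity of $\varphi$ on morphisms, $\Psi$ is bijective on morphisms, and it is bijective on objects by the previous paragraph. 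Since $AG_\Theta(H_i)$ and $C_\Theta(H_i)$ are anti-isomorphic, the isomorphism $\Psi$ yields $AG_\Theta(H_1)\cong AG_\Theta(H_2)$, i.e.\ the desired geometric similarity.

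The main obstacle I expect is the interface between the \emph{non-standard} notion of functor isomorphism used here---where the components $s_{W(X)}$ are only bijections of posets and need not themselves be morphisms of $C_\Theta(H_2)$---and the requirement that $\Psi$ land in genuine morphisms of $C_\Theta(H_2)$. This is where the hypothesis that $\varphi$ be \emph{coordinated with the lattice structures of closed congruences} does the real work: it is what guarantees that conjugation $s_{W(X)}\circ(-)\circ s_{W(Y)}^{-1}$ carries the order-preserving map induced by $\nu$ to the order-preserving map induced by $\varphi(\nu)$, so that the identity $s_{W(X)}\circ Cl_{H_1}(\nu)\circ s_{W(Y)}^{-1}=Cl_{H_2}(\varphi(\nu))$ holds at the level of the poset objects and not merely set-theoretically. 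Verifying this coordination carefully, and checking that the object bijection $X\mapsto X'$ is compatible with it, is the delicate step; once it is in place, the functoriality and bijectivity checks are routine.
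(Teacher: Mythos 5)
Your proposal is correct and takes essentially the approach the paper intends: the paper states Theorem \ref{th:gqqsim} without a written proof (deferring the ``additional conditions'' to its references), but its surrounding setup --- the commutative square of Definition \ref{def:isom} instantiated for $Cl_{H_1}$ and $Cl_{H_2}\cdot\varphi$, and the anti-isomorphism of Proposition \ref{pr:duali}, which is exactly where the hypothesis $Var(H_1)=Var(H_2)=\Theta$ is used --- is precisely your reduction to an isomorphism $C_\Theta(H_1)\cong C_\Theta(H_2)$ built by conjugation with the components $s_{W(X)}$, so that $\Psi(Cl_{H_1}(\nu))=Cl_{H_2}(\varphi(\nu))$ and bijectivity follows from $\varphi$ being an automorphism. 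Your closing observation about the role of the unspecified coordination conditions is also the honest reading of the paper, which leaves that point equally implicit.
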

\begin{theorem}\label{th:gqqsim1}
Suppose that the logical functors $Cl_{H_1}$ and $Cl_{H_2}\varphi$ are related by
$$
s=s(\varphi):Cl_{H_1} \to Cl_{H_2} \cdot \varphi,
$$
where the automorphism of categories $\varphi:\widetilde\Phi_\Theta\to \widetilde\Phi_\Theta $ is coordinated with the lattice structures of closed filters by additional conditions (see \cite{PZ}).
 Then the algebras $H_1$ and $H_2$ are logically similar.

\end{theorem}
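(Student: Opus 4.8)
The plan is to mimic the geometric argument of Theorem~\ref{th:gqqsim}, working throughout with the logical Galois closure in place of the algebraic one. Recall (Subsection~\ref{sec:lgsim}) that $H_1$ and $H_2$ are logically similar precisely when the categories $F_\Theta(H_1)$ and $F_\Theta(H_2)$ of lattices of $H$-closed filters are isomorphic. Hence it suffices to manufacture an isomorphism of categories $\Psi:F_\Theta(H_1)\to F_\Theta(H_2)$ out of the given data, namely the automorphism $\varphi$ of $\widetilde\Phi_\Theta$ and the isomorphism of functors $s=s(\varphi):Cl_{H_1}\to Cl_{H_2}\cdot\varphi$. The \emph{whole point} is that both $Cl_{H_1}$ and $Cl_{H_2}$ are surjective on objects and on morphisms onto their targets $F_\Theta(H_1)$ and $F_\Theta(H_2)$ (these target categories are, by construction, the images of the closure functors), so an isomorphism of functors whose source-side discrepancy is merely an automorphism of $\widetilde\Phi_\Theta$ should descend to an isomorphism between the two images.

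Concretely, first I would define $\Psi$ on objects. Since $\varphi$ is an automorphism of $\widetilde\Phi_\Theta$ it permutes the objects, $\varphi(\Phi(X))=\Phi(X^\varphi)$ for a bijection $X\mapsto X^\varphi$ of $\Gamma$; I set $\Psi$ to carry $F_\Theta^X(H_1)=Cl_{H_1}(\Phi(X))$ to $Cl_{H_2}(\varphi(\Phi(X)))=F_\Theta^{X^\varphi}(H_2)$. The component $s_{\Phi(X)}$ of the functor isomorphism is, by Definition~\ref{def:isom}, a bijection between these two lattices, and the coordination of $\varphi$ with the lattice structure (the additional hypothesis, see \cite{PZ}) guarantees that $s_{\Phi(X)}$ is a lattice isomorphism. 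On morphisms, a morphism of $F_\Theta(H_1)$ has the form $Cl_{H_1}(\nu)$ for some $\nu$ in $\widetilde\Phi_\Theta$; using the commutative square expressing $s$ I would set $\Psi(Cl_{H_1}(\nu))=Cl_{H_2}(\varphi(\nu))$, reading off from that square the conjugation $Cl_{H_2}(\varphi(\nu))=s_{\Phi(X)}\,Cl_{H_1}(\nu)\,s_{\Phi(Y)}^{-1}$. Because $\varphi$ is a bijection on morphisms and $Cl_{H_2}$ is surjective onto the morphisms of $F_\Theta(H_2)$, this prescription is well defined and bijective on each Hom-set, and it is compatible with the object assignment $F_\Theta^X\mapsto F_\Theta^{X^\varphi}$.

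Next I would check that $\Psi$ is a functor and is invertible. Preservation of identities and composition follows from the functoriality of $Cl_{H_2}$ and of $\varphi$ together with the naturality of the squares defining $s$; the inverse functor is built in the same way from $\varphi^{-1}$ and the inverse family $s^{-1}$. This yields $F_\Theta(H_1)\cong F_\Theta(H_2)$, whence logical similarity of $H_1$ and $H_2$ by the characterization recalled above. Should one prefer to phrase the conclusion through definable sets, one passes through the isomorphism $F^{-1}_\Theta(H)\cong LG_\Theta(H)$ (the opposite-morphism category of $H$-closed filters is the category of definable sets) to obtain $LG_\Theta(H_1)\cong LG_\Theta(H_2)$ directly.

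I expect the genuine obstacle to be the step turning the set-level components $s_{\Phi(X)}$ into morphisms of the target category. By Definition~\ref{def:isom} these components are required only to be bijections of the underlying sets, not morphisms in $F_\Theta(H_2)$; without further input $\Psi$ need not respect the lattice structure that constitutes the objects, and it would then fail to be a functor between $F_\Theta(H_1)$ and $F_\Theta(H_2)$. The ``additional conditions'' coordinating $\varphi$ with the lattices of closed filters (cited to \cite{PZ}) are exactly what promotes each $s_{\Phi(X)}$ to a lattice isomorphism and makes the conjugation $Cl_{H_1}(\nu)\mapsto Cl_{H_2}(\varphi(\nu))$ land among genuine morphisms of $F_\Theta(H_2)$. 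A secondary, purely bookkeeping point is to keep the variance of the contravariant closure functor $Cl_H$ consistent throughout, so that the squares are read in the correct direction; this only reverses arrows uniformly and causes no real difficulty.
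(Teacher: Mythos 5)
The paper never actually proves Theorem \ref{th:gqqsim1}: like its geometric counterpart, Theorem \ref{th:gqqsim}, it is stated as a survey assertion whose substance is delegated to \cite{PZ}, so there is no in-paper argument to compare yours against line by line. Judged on its own, your reconstruction is correct, and it is visibly the argument the surrounding diagrams are meant to suggest. The load-bearing observations are the right ones: objects and morphisms of $F_\Theta(H_i)$ are, by construction, exactly the images $Cl_{H_i}(\Phi(X))$ and $Cl_{H_i}(\nu)$; the naturality squares give $Cl_{H_2}(\varphi(\nu))=s_{\Phi(X)}\,Cl_{H_1}(\nu)\,s_{\Phi(Y)}^{-1}$, so the prescription $Cl_{H_1}(\nu)\mapsto Cl_{H_2}(\varphi(\nu))$ depends only on the morphism $Cl_{H_1}(\nu)$ and not on the choice of $\nu$; functoriality and invertibility follow since $\varphi$ is an automorphism and the components are bijections; and $F_\Theta(H_1)\cong F_\Theta(H_2)$ yields $LG_\Theta(H_1)\cong LG_\Theta(H_2)$ by the duality discussion of Subsection \ref{sec:lgsim}.

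Two refinements. First, your closing diagnosis of where the ``additional conditions'' of \cite{PZ} are indispensable is slightly misplaced: for logical similarity as this paper defines it (Definition \ref{def:lsim}, bare isomorphism of categories), the conjugate $s_{\Phi(X)}\,Cl_{H_1}(\nu)\,s_{\Phi(Y)}^{-1}$ is automatically a genuine morphism of $F_\Theta(H_2)$ --- it equals $Cl_{H_2}(\varphi(\nu))$ --- so no lattice-preservation of the components is needed for your $\Psi$ to be a functor; naturality plus bijectivity of the components already suffices. What the coordination conditions buy is that each $s_{\Phi(X)}$ is itself a lattice isomorphism, i.e.\ that the resulting isomorphism of categories is structure-respecting (``correct'' in the sense of \cite{PZ}), which is what the cited work needs, rather than being what makes the present implication go through. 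Second, the passage from $F_\Theta(H_1)\cong F_\Theta(H_2)$ to similarity rests on Proposition \ref{pr:duali1}, whose hypothesis $Var(H_i)=\Theta$ appears in Theorem \ref{th:gqqsim} but is omitted in Theorem \ref{th:gqqsim1}; your proof inherits this looseness from the paper rather than introducing it.
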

Note that if there exists $\varphi$ subject to the conditions of Theorems \ref{th:gqqsim} or \ref{th:gqqsim1}, then algebras $H_1$ and $H_2$ are called {\it automorphically equivalent }(see \cite{Pl-St}, \cite{Ts1}--\cite{Ts5}, \cite{PZ1}, \cite{PZ} for definitions and discussions).

In the particular case $\varphi=id$ we have the diagrams:


  $$
\CD
Cl_{H_1}(W(Y)) @> s_{W(Y)} >> Cl_{H_2}(W(Y)) \\
 @V Cl_{H_1}(\nu) VV @VV Cl_{H_2}\varphi(\nu) V\\
 Cl_{H_1}(W(X)) @> s_{W(X)} >> Cl_{H_2}(W(X)),
\endCD
$$
and
$$
\CD
Cl_{H_1}(\Phi(Y)) @> s_{\Phi(Y)} >> Cl_{H_2}(\Phi(Y)) \\
 @V Cl_{H_1}(\nu) VV @VV Cl_{H_2}(\nu) V\\
 Cl_{H_1}(\Phi(X)) @> s_{\Phi(X)} >> Cl_{H_2}(\Phi(X)),
\endCD
$$
which imply that
\begin{corollary}\label{th:gqqsim_1}
Let $Var(H_1)=Var(H_2)=\Theta.$
Suppose that the geometric (logical) functors $Cl_{H_1}$ and $Cl_{H_2}$ are isomorphic.
Then the algebras $H_1$ and $H_2$ are geometrically (logically) similar.
\end{corollary}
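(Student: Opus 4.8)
The plan is to read Corollary~\ref{th:gqqsim_1} as the degenerate instance $\varphi=\mathrm{id}$ of Theorems~\ref{th:gqqsim} and~\ref{th:gqqsim1}. First I would set $\varphi=\mathrm{id}_{\Theta^{0}}$ in Theorem~\ref{th:gqqsim}, so that $Cl_{H_2}\cdot\varphi=Cl_{H_2}$ and the two commuting squares collapse to the first displayed diagram preceding the corollary; likewise $\varphi=\mathrm{id}_{\widetilde\Phi}$ in Theorem~\ref{th:gqqsim1} gives the second. Because the identity automorphism trivially preserves the poset (resp. lattice) structure of closed congruences (resp. filters), the ``additional conditions'' coordinating $\varphi$ with those structures are satisfied automatically, and the hypotheses reduce exactly to ``$Cl_{H_1}$ and $Cl_{H_2}$ are isomorphic,'' with conclusion geometric (resp. logical) similarity in the sense of Definitions~\ref{def:gsim} and~\ref{def:lsim}.

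To exhibit the content behind this specialization I would spell out how an isomorphism of $PoSet$-valued functors descends to an isomorphism of the categories $C_\Theta(H_1)$ and $C_\Theta(H_2)$. Recall that $Cl_{H_i}\colon\Theta^{0}\to C_\Theta(H_i)$ is bijective on objects (both object classes are indexed by $\Gamma$) and full, since by construction every morphism of $C_\Theta(H_i)$ has the shape $s_\ast=Cl_{H_i}(s)$; thus $Cl_{H_i}$ presents $C_\Theta(H_i)$ as the quotient of $\Theta^0$ by the morphism congruence $\nu_1\sim_i\nu_2 \Leftrightarrow Cl_{H_i}(\nu_1)=Cl_{H_i}(\nu_2)$. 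An isomorphism $s\colon Cl_{H_1}\to Cl_{H_2}$ (Definition~\ref{def:isom}) supplies, for every $\nu\colon W(Y)\to W(X)$, bijections $s_{W(X)}$, $s_{W(Y)}$ with
$$Cl_{H_2}(\nu)=s_{W(X)}\,Cl_{H_1}(\nu)\,s_{W(Y)}^{-1}.$$
Hence $Cl_{H_1}(\nu_1)=Cl_{H_1}(\nu_2)$ if and only if $Cl_{H_2}(\nu_1)=Cl_{H_2}(\nu_2)$, so $\sim_1=\sim_2$ and the two quotients coincide, giving $C_\Theta(H_1)\cong C_\Theta(H_2)$. Invoking Proposition~\ref{pr:duali} under $Var(H_1)=Var(H_2)=\Theta$, each $AG_\Theta(H_i)$ is anti-isomorphic to $C_\Theta(H_i)$, whence $AG_\Theta(H_1)\cong C_\Theta(H_1)^{\mathrm{op}}\cong C_\Theta(H_2)^{\mathrm{op}}\cong AG_\Theta(H_2)$, i.e. geometric similarity. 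The logical half runs verbatim with $\widetilde\Phi$, the lattice-valued $Cl^{L}_{H}$, the categories $F_\Theta(H_i)$ of $H$-closed filters, and Proposition~\ref{pr:duali1} replacing their geometric counterparts.

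The step I expect to be the main obstacle is ensuring that the \emph{weak} functor isomorphism of Definition~\ref{def:isom}---whose components $s_{W(X)}$ are only required to be bijections, not order or lattice morphisms---actually transports the full poset/lattice-enriched structure of $C_\Theta$ and $F_\Theta$, so that the resulting category isomorphism genuinely lands in $AG_\Theta$ / $LG_\Theta$ rather than in an abstract category with the same underlying morphisms. Concretely one must check that each $s_{W(X)}$ respects the order on closed congruences (respectively the lattice operations on closed filters); this is exactly what the cited coordination conditions guarantee for a general $\varphi$, and it is vacuous for $\varphi=\mathrm{id}$. Everything else is functoriality bookkeeping: the conjugation formula automatically respects composition and identities because $Cl_{H_1}$ and $Cl_{H_2}$ are functors, and the availability of fullness and object-bijectivity rests on $Var(H_i)=\Theta$ through Propositions~\ref{pr:duali} and~\ref{pr:duali1}.
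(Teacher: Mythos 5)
Your proposal is correct and follows the paper's own route: the paper proves this corollary precisely by setting $\varphi=\mathrm{id}$ in Theorems \ref{th:gqqsim} and \ref{th:gqqsim1}, displaying the resulting collapsed diagrams and reading off geometric (logical) similarity. Your second and third paragraphs, which unpack how the functor isomorphism forces the morphism congruences induced by $Cl_{H_1}$ and $Cl_{H_2}$ on $\Theta^0$ (resp. $\widetilde\Phi$) to coincide and then invoke Propositions \ref{pr:duali} and \ref{pr:duali1}, spell out detail that the paper delegates to the cited theorems and their references, but this is an elaboration of, not a departure from, the same argument.
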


This fact hints the following definition.

\begin{defn} Algebras $H_1$ and $H_2$ are called semi-$LG$ equivalent if the corresponding closure functors $Cl_{H_1}$ and $Cl_{H_2}$ are isomorphic.
\end{defn}


Now we shall formulate several problems related to  logical geometry. Let us start with the  case when $\Theta=Grp$.

\begin{problem}\label{pr:lg1} It is known \cite{Sklinos_1}, \cite{Zhitom_types}, that any group $H$ which is $LG$-equivalent to a free group $W(X)$,  is isomorphic to it. What is the situation, if $H$ is semi-$LG$ equivalent to $W(X)$.
\end{problem}

\begin{problem}\label{pr:lg2} What can be said about a group $H$ which is logically similar to a free group $W(X)$.
\end{problem}

\begin{problem}\label{pr:lg3} If two groups are $LG$-equivalent, then they are isotypic and, hence, elementary equivalent. What is the relation between the elementary equivalence of groups and their logical similarity?
\end{problem}

\begin{problem}\label{pr:lg7} Are their  logically similar groups $H_1$ i $H_2$, such that the functors  $Cl_{H_1}$ and $Cl_{H_2}\varphi$ are not isomorphic for any automorphism $\varphi$.
\end{problem}

The next problem deals with logical invariants associated with semi-$LG$-equivalence.

\begin{problem}\label{pr:lg6}

Propositions \ref{pr:imp} and \ref{pr:dis} provide implicative and disjunctive criteria for algebras to be logically equivalent. Find criteria which provide semi-$LG$-equivalence of algebras.
\end{problem}

As it was said above, the group of automorphisms of the category $\Theta^0$  plays an exceptional role in  problems related to geometrical similarity. The following problems are directed to find out what is the situation in the case of logical geometry.

\begin{problem}\label{pr:lg3} Study the group of automorphisms of the category $\widetilde\Phi_\Theta.$
\end{problem}

\begin{problem}\label{pr:lg4}Study the group of automorphisms $Aut(End(\Phi(X)))$.
\end{problem}


\subsection{Logically perfect and logically regular varieties}\label{perfect}

Up to now  we assumed that the variety $\Theta$ is arbitrary.  Now we distinguish the classes of varieties which are characterized by specific logical properties.

Let $H$ be an algebra in $\Theta$.

\begin{defn}\label{dfn:lh}
Algebra $H$ is called logically
homogeneous if for every two points $\mu: W(X)\to H$ and $\nu: W(X)\to H$ the equality $LKer(\mu)=LKer(\nu)$ holds if and only if there exists an automorphism $\sigma$ of the algebra $H$ such that $\mu=\nu\sigma$.
\end{defn}

\begin{defn}\label{dfn:lp} A variety of algebras $\Theta$ is called logically perfect if every finitely generated free in $\Theta$ algebra $W(X)$, $X\in \Gamma$ is logically homogeneous.
\end{defn}

%

\begin{defn} \label{dfn:ls}An algebra $H$ in $\Theta$ is called logically separable, if  every $H'\in\Theta$ which is $LG$-equivalent 
 to $H$ is isomorphic to $H$. 
\end{defn}

\begin{defn}\label{dfn:lr} A variety $\Theta$ is called logically regular if every free in $\Theta$ algebra $W(X)$, $X\in \Gamma$ is logically separable.
\end{defn}
\noindent


\medskip

The following theorem is valid:

\begin{theorem}\label{thm:lplr}
 {
If the variety $\Theta$ is  logically perfect, then it is logically regular.}
\end{theorem}

\begin{proof}
Let the variety $\Theta$ be  logically perfect and $W=W(X)$ a free in $\Theta$ algebra of the rang $n$, $X=\{x_1, \ldots, x_n\}$. Rewrite $W=H=<a_1, \ldots, a_n>$, where $a_1, \ldots, a_n$ are free generators in $H$. Let $H$ and $G \in \Theta$ be isotypic.

Take $\mu: W(X) \to H$ with $\mu(x_i)=a_i$. We have $\nu:W(X) \to G$ with $T^H_P(\mu)=T^G_P(\nu)$, $\nu(x_i)=b_i$, $B=<b_1, \ldots, b_n>$. The algebras $H$ and $B$ are isomorphic by the isomorphism $a_i \to b_i$, $i=1, \ldots, n$.

Indeed, $T^H_P(\mu)=T^G_P(\nu)$ implies $LKer(\mu)=LKer(\nu)$ and, hence, $Ker(\mu)=Ker(\nu)$. This gives the needed isomorphism $H \to B$.

Let us prove that $B=G$. Let $B\neq G$ and there is a $b\in G$ which doesn't lie in $B$.

Take a subalgebra $B'=<b,b_1, \ldots, b_n>$ in $G$ and a collection of variables $Y=\{y,x_1, \ldots, x_n\}$ with $\nu ': W(Y) \to G$, $\nu '(y)=b$, $\nu '(x_i)=\nu(x_i)=b_i$, $i=1, \ldots, n$.

We have $\mu ':W(Y) \to H$ with $T^H_P(\mu')=T^G_P(\nu')$. Let $\mu '(y)=a'$, $\mu '(x_i)={a'}_i$, $i=1, \ldots, n$. Let the algebras $H'=<a',{a'}_1, \ldots, {a'}_n>$ and $B'=<b,b_1, \ldots, b_n>$ be isomorphic.

Further we work with the equality $LKer(\mu ')=LKer(\nu ')$. Take a formula $u \in LKer(\mu)$ and pass to a formula $u' = (y \equiv y)\wedge u$. The point $(b_1, \ldots, b_n)$ satisfies the formula $u$ and, hence, the point $\nu '$ satisfies $u'$. Therefore, the point $\mu '$ satisfies $u'$ as well, and $u' \in LKer(\mu ')$.

Take now a point $\mu '': W(X) \to H$ setting $\mu ''(x_i)={a'}_i$, $i=1, \ldots, n$. The point $\mu'$ satisfies the formula $u'$ if and only if the point $\mu ''$ satisfies $u$. Hence, $LKer(\mu)=LKer(\mu '')$. Therefore, the point $\mu ''$ is conjugated with the point $\mu$ by some isomorphism $\sigma$.  Thus, the point $<{a'}_1, \ldots, {a'}_n>$ is a basis in $H$ and $a' \in <{a'}_1, \ldots, {a'}_n>$. This contradicts with $b \not\in <b_1, \ldots, b_n>$. So, $B=G$ and $H$ and $G$ are isomorphic.

\end{proof}

\begin{problem}\label{pr:1}  {Is the converse statement true? That is, whether every logically regular algebra is logically perfect? }
\end{problem}


It seems to us that the answer can be negative and the logical regularity of a variety $\Theta$ doesn't imply its logical perfectness. This leads to the problem

\begin{problem}\label{pr:2}  {
Find a logically regular but not logically perfect
variety $\Theta$.
In particular, consider this problem for different varieties of groups and varieties of semigroups.}
\end{problem}

Let us give some examples of perfectness and regularity for the varieties of groups and semigroups (see \cite{Houcine}, \cite{PerinSklinos}, \cite{Pillay}, \cite{Sklinos_1}, \cite{Zhitom_types}).

\begin{itemize}
\item{} The variety of all groups is logically perfect, and, hence, is logically regular.
\item{} The variety of abelian groups is logically perfect, and, hence, is logically regular.
\item{} The variety of all  nilpotent groups of  class $n$ is logically perfect, and, hence, is logically regular.
\item{} The variety of all semigroups is logically regular
\item{} The variety of all inverse semigroups is logically regular.
\end{itemize}
Now we can specify Problem \ref{pr:2}  to the case of semigroups.
\begin{problem}\label{pr:3}  Check whether the variety of all semigroups and of all inverse semigroups are logically perfect?
\end{problem}


We shall emphasize two following problems regarding solvable groups.

\begin{problem}\label{pr:4}
 {
What can be said about logical regularity and logical perfectness for the variety of all solvable groups of the derived length $n$.}
\end{problem}

\begin{problem}\label{pr:5}  {Is the variety of metabelian groups logically perfect?  Is the variety of metabelian groups logically regular?}
\end{problem}

The situation with logical regularity and logical perfectness of other varieties of algebras is not clear. Let us point out  some questions which appear by varying the variety $\Theta$. First of all:

\begin{problem}\label{pr:6} {
Let $\Theta$ be a classical variety $Com-P$, a variety of commutative and associative algebras with unit over a field $P$. The problem is to verify its logical regularity and logical perfectness.}
\end{problem}
The same question stands with respect to some other  well-known varieties. So, are the following varieties logically perfect or logically regular.

\begin{problem}\label{pr:7} {
The variety $Ass-P$ of associative algebras over a field $P$.}
\end{problem}

\begin{problem}\label{pr:8} {
The variety $Lee-P$ of Lee algebras over a field $P$.}
\end{problem}

\begin{problem}\label{pr:9} {
The variety of $n$-nilpotent associative algebras.}
\end{problem}

\begin{problem}\label{pr:10} {
The variety of $n$-nilpotent Lee algebras.}
\end{problem}

\begin{problem}\label{pr:11} {
The varieties of solvable Lee/associative algebras of the derived length $n$ .}
\end{problem}

It is also important to find out how the passage from a semigroup/group to a semigroup/group algebra behaves with respect to logical regularity and logical perfectness. This  leads to the problem:
\begin{problem}\label{pr:11} {
Let $S$ be a semigroup/group and $P$ a field, both logically homogeneous. Whether it is true that the semigroup/group  algebra $PS$ is logically homogeneous as well?}
\end{problem}

\subsection{Logically noetherian and saturated algebras }\label{sub:mt}

\begin{defn} {
An algebra $H$ is called logically noetherian if for any set of formulas $T \subset \Phi(X)$, $X \in \Gamma$ there is a finite subset $T_0$ in $T$ determining the same set of points $A$ that is determined by the set $T$.}
\end{defn}

\begin{defn} {
An algebra $H \in \Theta$ is called $LG$-saturated if for every $X \in \Gamma$,  each ultrafilter $T$ in $\Phi(X)$ containing $Th^X(h)$ has the form $T = LKer(\mu)$ for some $u : W(X) \to H$.}
\end{defn}

\begin{theorem} {
If an algebra $H$ is logically noetherian then $H$ is LG-saturated.}
\end{theorem}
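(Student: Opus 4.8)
The plan is to show that for a logically noetherian $H$, every ultrafilter $T$ in $\Phi(X)$ containing $Th^X(H)$ is realized as a logical kernel, i.e. $T = LKer(\mu)$ for some $\mu : W(X)\to H$. I would split the argument into two parts: first, producing at least one point $\mu$ with $T\subseteq LKer(\mu)$, which amounts to showing that the definable set $T^L_H$ is nonempty; and second, upgrading this inclusion to an equality using the maximality of ultrafilters.

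For the first part I would invoke logical noetherianity to replace the possibly infinite set $T$ by a finite subset $T_0=\{u_1,\dots,u_k\}\subseteq T$ with $(T_0)^L_H=T^L_H$. Setting $u=u_1\wedge\cdots\wedge u_k$, the fact that $T$ is a filter gives $u\in T$, and since $T$ is proper, $u$ is not the zero of $\Phi(X)$. The decisive observation is that $Val^X_H(u)\neq\emptyset$: were it empty, then $Val^X_H(\neg u)$ would be the whole space $Hom(W(X),H)$, so $\neg u\in Th^X(H)\subseteq T$, and then $u\wedge\neg u=0\in T$ would contradict properness of $T$. Because $Val^X_H$ is a homomorphism of Boolean algebras, $Val^X_H(u)=\bigcap_i Val^X_H(u_i)=(T_0)^L_H=T^L_H$, so $T^L_H$ is nonempty and yields a point $\mu$ with $T\subseteq LKer(\mu)$.

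For the second part I would use that both $T$ and $LKer(\mu)$ are Boolean ultrafilters in $\Phi(X)$ (for $LKer(\mu)$ this was recorded in Subsection \ref{sub:1}). An ultrafilter is a maximal proper filter, and $LKer(\mu)$ is proper; hence the inclusion of the maximal proper filter $T$ into the proper filter $LKer(\mu)$ forces $T=LKer(\mu)$. This provides the required realization and proves that $H$ is $LG$-saturated.

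The conceptual heart of the argument, and the only place where the noetherian hypothesis enters, is the nonemptiness claim $T^L_H\neq\emptyset$. Without a finiteness reduction, $T^L_H$ is an intersection of infinitely many nonempty definable sets and may collapse to the empty set; logical noetherianity is precisely what collapses this intersection to a single $Val^X_H(u)$, whose nonemptiness then follows formally from $T\supseteq Th^X(H)$ together with properness of $T$. The remaining maximality step is routine Boolean algebra and should present no difficulty.
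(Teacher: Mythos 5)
Your proof is correct and follows essentially the same route as the paper's: reduce $T$ to a finite subset $T_0$ via logical noetherianity, pass to the conjunction $u\in T$, show $Val^X_H(u)\neq\emptyset$ by observing that otherwise $\neg u\in Th^X(H)\subseteq T$ would contradict $T$ being a proper (ultra)filter, and then upgrade $T\subseteq LKer(\mu)$ to equality by maximality. The only cosmetic difference is that the paper packages the nonemptiness argument through the quotient $\Phi(X)/Th^X(H)$ and the classes $[u]$, whereas you argue directly with $Val^X_H$; the underlying logic is identical.
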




\begin{proof}
We start from the homomorphism:
$$
Val_H^X:\Phi(X)\to Hal_\Theta^X(H).
$$
Here $Ker(Val_H^X)=Th^X(H)$. Consider the quotient algebra $\Phi(X)/Th^X(H)$ which is isomorphic to a subalgebra in $Hal_\Theta^X(H)$. For every $u\in \Phi(X)$ denote by $[u]$ the image of $u$ in the quotient algebra. By definition $[u]=0$ means that $Val_H^X(u)$ is the empty subset in $Hom(W(X),H)$. Analogously $[u]=1$ means that  $Val_H^X(u)$ is the whole space $Hom(W(X),H)$ and, thus, $u\in Th^X(H)$.

Denote by $T$  an ultrafilter in $\Phi(X)$, containing the theory $Th^X(H)$. We need to check that there is a point $\mu:W(X) \to H$ such that $T=LKer(\mu)$. Let $[u]=0$. Then $[\neg u]=1$, which means that $\neg u\in Th^X(H)\subset T$. Hence $\neg \in T$. Then $u$ does not belong to $Th^X(H),$ since $T$ cannot contain both $u$ and $\neg u$. So $u\notin T$. Thus, if $[u]=0$ then $u\notin T$. If $u\in T$, then $[u]\neq 0$. This means that $Val^X_H(u)$ is not empty. Thus, we have a point $\mu:W(X)\to H$ which satisfies $u$, that is $u\in LKer(\mu)$. Since $H$ is logically noetherian then there exists a finite subset $T_0=\{u_1,\ldots,u_n\}$ such that $T^L_H={(T_0)}^L_H$. Take $u=u_1\wedge u_2\wedge\ldots u_n.$ Since all $u_i\in T$ then $u\in T$, Then there exists $\mu$ satisfying formula $u$. The same point $\mu$ satisfies every $u_i$. Hence $\mu\in (T_0)^L_(H)$=$T^L_(H)$ and $T$ lies in $LKer(\mu)$.  Therefore $T=LKer(\mu)$.

\end{proof}

Each finite algebra $H$ is logically noetherian. Therefore, every finite $H$ is saturated. This holds for every $\Theta$.





\subsection{Automorphically finitary algebras }\label{sub:af}

We have already mentioned that the group $Aut(H)$ acts in each space $Hom(W(X),H)$, $X \in \Gamma$.

\begin{defn}\label{def:af} {
Let us call an algebra $H$ automorphically finitary if in each such action there is only finite number of $Aut(H)$-orbits.}
\end{defn}

It is easy to show that if algebra $H$ is automorphically finitary, then it is logically noetherian. The example of abelian groups of the exponent $p$ shows that there exist infinite automorphically finitary algebras and, thus, there are infinite saturated algebras.


\begin{problem}\label{pr:15} {
Describe all automorphically finitary abelian groups.}
\end{problem}

\begin{problem}\label{pr:16} {
Construct examples of non-commutative automorphically finitary  groups.}
\end{problem}

\begin{problem}\label{pr:17} {
Classify abelian groups by $LG$-equivalence relation}.
\end{problem}

Let us make some comments regarding Problem \ref{pr:17}. According to Theorem \ref{thm:lgiso}, $LG$-equivalent abelian groups are isotypic.  As we know (Corollary \ref{cor:el}), isotypeness of algebras implies their elementary equivalence. Classification of abelian groups with respect to elementary equivalence had been obtained by W.Szmielew in her classical paper \cite{Sz}. So, Problem \ref{pr:17} asks how one should modify the list from \cite{Sz} in order to obtain the isotypic abelian groups.


We had considered two important characteristics of varieties of algebras, namely, their logical perfectness and logical regularity. Let us introduce one more characteristic.

We call a variety $\Theta$ exceptional if
\begin{itemize}
\item any two free in $\Theta$  algebras $W(X)$ and $W(Y)$ of a finite rank, generating the whole $\Theta$, are elementarily equivalent, and
\item if they are isotypic then they are isomorphic.
\end{itemize}

\begin{problem}\label{pr:18} {
Whether it is true that only the variety of all groups is unique.}
\end{problem}



\section{Model theoretical types and logically geometric types}\label{sec:type}

\subsection{Definitions of types}\label{sub:ty}

The notion of a type is one of the key notions of Model Theory. In what follows we will distinguish between model theoretical types (MT-types) and logically geometric types (LG-types). Both kinds of types are oriented towards some algebra $H \in \Theta$, where $\Theta$ is a fixed variety of algebras.

Generally speaking, a type of a point $\mu : W(X) \to H$ is a logical characteristic of the point $\mu$.
Model-theoretical idea of a type and its definition is described in many sources, see, in particular, \cite{Hodges}, \cite{Marker}. We consider this idea from the perspective of algebraic logic (cf., \cite{PlAlPl}) and give all the definitions in the corresponding terms.

Proceed from the algebra of formulas $\Phi(X^0)$, where $X^0$ is an infinite set of variables. It arrives from the algebra of pure first-order formulas with equalities $w \equiv w'$, $w, w' \in W(X^0)$ by Lindenbaum-Tarski algebraization approach (cf. Section \ref{sub:ms}). 
 $\Phi(X^0)$ is an $X^0$-extended Boolean algebra, which means that  $\Phi(X^0)$ is a Boolean algebra with quantifiers $\exists x $, $x\in X^0$ and equalities $w\equiv w'$, where $w,w'\in W(X^0)$, where $W(X^0)$ is the free over $X^0$ algebra in $\Theta$. All these equalities generate the algebra $\Phi(X^0)$. Besides, the semigroup $End(W(X^0))$ acts in the Boolean algebra $\Phi(X^0)$ and we can speak of a polyadic algebra $\Phi(X^0)$. However, the elements $s \in End(W(X^0))$ and the corresponding $s_\ast$ are not included in the signature of the algebra $\Phi(X^0)$.

 Since $\Phi(X^0)$ is a one-sorted algebra,  one can speak, as usual, about free and bound occurrences of the variables in   the formulas $u \in \Phi(X^0)$.


Define further $X$-special formulas in $\Phi(X^0)$, $X=\{x_1, \ldots, x_n \}$. Take $X^0 \backslash X = Y^0$. A formula $u \in \Phi(X^0)$ is {\it $X$-special} if each its free variable is occurred in $X$ and each bound variable belongs to $Y^0$. A formula $u\in \Phi(X^0) $ is closed  if it does not have free variables. Only finite number of variables occur in each formula.

Denoting  an $X$-special formula $u$ as $u=u(x_1, \ldots , x_n$; $y_1, \ldots , y_m)$  we solely mean  that the set $X$ consists of  variables $x_i$, $i=1,\ldots n$, and those of them who occur in $u$, occur  freely.




\begin{defn}\label{def:tpMT}
Let $H$ be an algebra from $\Theta$. An $X$-type (over $H$) is a set of $X$-special formulas in $\Phi(X^0)$, consistent
 with the elementary theory of the algebra $H$.
 \end{defn}
 We  call such type an $X$-MT-type (Model Theoretic type) over $H$. An $X$-MT-type is called {\it complete} if it is maximal with respect to inclusion. Any complete $X$-MT-type is a Boolean ultrafilter in the algebra $\Phi(X^0)$. Hence, for every $X$-special formula $u\in \Phi(X^0)$, either $u$ or its negation belongs to a complete type.


 \begin{defn}\label{def:tpLG}
  An $X$-LG-type (Logically Geometric type) (over $H$) is a Boolean ultrafilter in the corresponding $\Phi(X)$, which contains the elementary theory $Th^X(H)$.
 \end{defn}

 So, any $X$-MT-type lies in the one-sorted algebra $\Phi(X^0)$. Any $X$-LG-type lies in the domain $\Phi(X)$ of the multi-sorted algebra $\widetilde \Phi$.

 We denote the MT-type of a point $\mu :W(X) \to H$  by $Tp^{H}(\mu)$, while the LG-type of the same point is, by definition, its logical kernel $LKer(\mu)$.


\begin{defn}\label{def:tip}
Let a point $\mu :W(X) \to H$, with $a_i = \mu(x_i)$,  be given.  An $X$-special formula $u = u(x_1, \ldots, x_n ; y_1, \ldots, y_m)$ belongs to the type $Tp^{H} (\mu)$ if the formula $u(a_1, \ldots, a_n ; y_1, \ldots, y_m)$ is satisfied in the algebra $H$.
\end{defn}


 The type $Tp^{H} (\mu)$ consists of all $X$-special formulas satisfied on $\mu$. It is a complete $X$-MT-type  over $H$.

By definition, the formula $v=u(a_1, \ldots, a_n ; y_1, \ldots, y_m)$ is closed. Thus, if it is satisfied one a point, then its value set $Val^H_X(v)$ is the whole $Hom(W(X),H)$.



 Note that in our definition of an $X$-MT-type the set of free variables in the formula $u$ is not necessarily the whole $X=\{x_1,\ldots,x_n\}$ and can be a part of it. In particular, the set of free variables can be empty. In this case the formula $u$ belongs to the type if it is satisfied in $H$.



In the previous sections the algebra $\widetilde \Phi$ was built basing on the set $\Gamma$ of all finite sunsets of the set $\Gamma$. In fact, one can take instead of $\Gamma$ the system
$\Gamma^\ast=\Gamma \bigcup X^0$ and construct the corresponding multi-sorted algebra. Then, to each homomorphism $s:W(X^0)\to W(X)$ it corresponds a morphism $s_\ast:\Phi(X^0)\to \Phi(X)$ and, vice versa,  $s:W(X)\to W(X^0)$ induces $s_\ast:\Phi(X)\to \Phi(X^0)$. In this setting the extended Boolean algebra $Hal_\Theta^{X^0}(H)$ and the homomorphism $Val_H^{X^0} :\Phi(X^0)\to Hal_\Theta^{X^0}(H)$ are defined in a usual way. A point $\mu: W(X^0)\to H$ satisfies $u\in \Phi(X^0)$ if $\mu \in Val_H^{X^0}(u)$.

One more  remark. Since $\Phi(X^0)$ is generated by equalities, when we say that a variable 
 occur in a formula $u\in \Phi(X^0)$, this means that it occur in one of the equalities $w=w'$, participating in $u$.
 The set of variables occurring in  $u$ determines  a subalgebra $\Phi(X \cup Y)$ in $\Phi(X^0)$, such that  $u\in \Phi(X \cup Y)$.

 If we stay in one-sorted logic, this is a subalgebra in the signature of the one-sorted algebra $\Phi(X^0)$. 

On the other hand, we can view algebra $\Phi(X \cup Y)$ as an object in the multi-sorted logic. Here, 
 to every homomorphism $s : W(X \cup Y) \to W(X' \cup Y')$ corresponds a morphism $s_\ast : \Phi(X \cup Y) \to \Phi(X' \cup Y')$. For $u \in \Phi(X \cup Y)$ we have $s_\ast u \in \Phi(X' \cup Y')$. Let $u$ be an $X$-special formula. It is important to know for which $s$   the formula $s_\ast u$ is $X'$-special. 

\subsection{Another characteristic of the type $Tp^{H} (\mu)$}

We would like to relate an MT-type of a point to its LG-type.

Consider a special homomorphism $s: W(X^0) \to W(X)$ for an infinite set $X^0$ and its finite subset $X=\{x_1, \ldots , x_n\}$, such that %
$s(x)=x$ for each $x \in X$, i.e., $s$ is identical on the set $X$. According to the transition from $s$ to $s_\ast$, we have $$s_\ast :\Phi(X^0) \to \Phi(X).$$

\begin{theorem}\label{th:crit}
For each special homomorphism $s$, each special formula $u=u(x_1, \ldots, x_n; y_1, \ldots , y_m)$ in $\Phi(X^0)$ and every point $\mu : W(X) \to H$, we have $u \in Tp^H(\mu)$ if and only if $s_\ast u \in LKer(\mu)$. Here, in the first case $u$ is considered in one-sorted algebra $\Phi(X^0)$, while in the second case $s_\ast u$ lies in the domain $\Phi(X)$ of the multi-sorted $\widetilde\Phi=(\Phi(X), \ X\in \Gamma^\ast.)$
\end{theorem}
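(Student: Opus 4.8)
The plan is to collapse both sides of the claimed equivalence into a single statement about the point $\mu s : W(X^0)\to H$, and then to cash this in against the $X$-speciality of $u$.

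First I would use the naturality square from Subsection \ref{sub:val}, which for the homomorphism $s:W(X^0)\to W(X)$ reads $Val^X_H\circ s_\ast = s^H_\ast\circ Val^{X^0}_H$. By the construction of $s^H_\ast$ in Subsection \ref{sub:hal}, one has $s^H_\ast(A)=\tilde s^{-1}(A)$ with $\tilde s(\mu)=\mu s$, so that
$$
\mu\in Val^X_H(s_\ast u)\iff \mu s\in Val^{X^0}_H(u).
$$
Since $s_\ast u\in LKer(\mu)$ means exactly $\mu\in Val^X_H(s_\ast u)$, the theorem is equivalent to the assertion that the point $\mu s$ of $Hom(W(X^0),H)$ satisfies $u$ if and only if $u\in Tp^H(\mu)$. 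Observe also that because $s$ is identical on $X$ we have $(\mu s)(x_i)=\mu(x_i)=a_i$, so $\mu s$ carries the prescribed values $a_1,\ldots,a_n$ on the free variables of $u$, no matter what $s$ does to the variables of $Y^0=X^0\setminus X$.

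The crux is then a cylinder (invariance) lemma: membership of a point in $Val^{X^0}_H(u)$ depends only on its coordinates at the free variables of $u$. Since $u$ is $X$-special, those free variables lie in $X$ while every bound variable lies in $Y^0$, so it suffices to prove that $Val^{X^0}_H(u)$ is invariant under changing any coordinate outside $X$. I would establish this by structural induction on the build-up of $u$ from equalities through Boolean connectives and quantifiers $\exists y$: the equality and Boolean steps are immediate, and the quantifier step is exactly where the bound variable $y\in Y^0$ is eliminated, using that $\exists y$ is realized in $Hal^{X^0}_\Theta(H)$ as cylindrification in the $y$-coordinate (Subsection \ref{ex:im}). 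This is the model-theoretic fact that truth depends only on the free variables, transcribed into the algebraic-logic semantics; it also explains why the conclusion is insensitive to the values of $s$ on $Y^0$. This invariance lemma is the main obstacle; everything else is formal.

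Finally I would assemble the pieces. By the cylinder lemma, $\mu s\in Val^{X^0}_H(u)$ holds iff \emph{some} (equivalently, every) point with $X$-coordinates $(a_1,\ldots,a_n)$ lies in $Val^{X^0}_H(u)$, that is, iff the closed formula $u(a_1,\ldots,a_n;y_1,\ldots,y_m)$ is satisfied in $H$; and by Definition \ref{def:tip} this is precisely the condition $u\in Tp^H(\mu)$. Chaining this with the equivalence of the first step yields $u\in Tp^H(\mu)\iff s_\ast u\in LKer(\mu)$, as required.
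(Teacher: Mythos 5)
Your proof is correct and follows essentially the same route as the paper's: both reduce the theorem, via the naturality $Val^X_H(s_\ast u)=s_\ast Val^{X^0}_H(u)$ and the observation that $\mu s$ has $X$-coordinates $(a_1,\ldots,a_n)$, to the fact that satisfaction of an $X$-special formula depends only on the coordinates in $X$. The only real difference is that you prove this invariance yourself by structural induction (your cylinder lemma), whereas the paper imports it by citation — the characterization $u\in Tp^H(\mu)\iff A_\mu\subset Val^{X^0}_H(u)$ from \cite{PlAlPl} and the free-variable dependence fact from \cite{CK} — so your write-up is the more self-contained of the two.
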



\begin{proof}
We need one more look at a formula $u \in Tp^H(\mu)$. Given a point $\mu$, consider a set $A_\mu: W(X)\to H$ of the points $\eta : W(X^0) \to H$ defined by the rule $\eta(x_i) = \mu(x_i)=a_i$ for $x_i \in X$ and, $\eta(y)$ is an arbitrary element in $H$ for $y \in Y^0$. Denote
$$T_\mu = \bigcap_{\eta \in A_\mu} LKer(\eta).$$
Here, as usual, $LKer(\eta)$ is the ultrafilter in $\Phi(X^0)$, consisting of formulas $u$ valid on a point $\eta$.
It is proved \cite{PlAlPl}, that a special formula $u$ belongs to the type $Tp^H(\mu)$ if and only if $u \in T_\mu$, which is equivalent to $Val^{X^0}_H (u) \supset A_\mu$.

Note that the formula $u$ of the kind
$$x_1 \equiv x_1 \wedge \ldots \wedge x_n \equiv x_n \wedge v(y_1, \ldots , y_m)$$
belongs to each $LKer(\eta)$ if the closed formula $v(y_1, \ldots , y_m)$ is satisfied in the algebra $H$. This means also that $T_\mu$ is not empty for every $\mu$.

Return to the special homomorphism $s: W(X^0)\to W(X)$ and consider the point $\mu s : W(X^0) \to H$. For $x_i\in X$ we have $\mu s(x_i)=\mu(x_i)=a_i$. Hence, the point  $\mu s $ belongs to $A_\mu$.


Observe  that for the formula $u = u(x_1, \ldots, x_n ; y_1, \ldots, y_m),$ the formula $u(a_1, \ldots, a_n ; y_1, \ldots, y_m)$ is satisfied in the algebra $H$ if the set $A_\mu$ lies in $Val^{X^0}_H (u)$. Thus, $\mu s $ belongs to $Val^{X^0}_H (u)$. By definition of $s_\ast$ we have that $\mu$ lies in $s_\ast Val^{X^0}_H (u)=Val^{X}_H(s_\ast u)$, which means that
$$
s_\ast u \in LKer(\mu).
$$
We proved the statement in one direction.


Conversely, let $s_\ast u \in LKer(\mu)$. Then
$$\mu \in Val^{X}_H(s_\ast u)=s_\ast Val^{X^0}_H (u)$$
and $\mu  s\subset  Val^{X^0}_H (u)$.
Since the formula $u(a_1, \ldots, a_n ; y_1, \ldots, y_m)$ is satisfied in $H$, then every point from the set $A_\mu$ belongs to $Val^{X^0}_H(u)$ (see also \cite{CK}). This means that the formula $u$ belongs to $Tp^H(\mu)$.




\end{proof}

Recall that we have mentioned the notion of a saturated algebra. It was LG-saturation. In the Model Theory MT-saturation is defined. MT-saturation of the algebra $H$ means that for any $X$-type $T$ there is a point $\mu : W(X) \to H$ such that $T \subset Tp^H(\mu)$.

\begin{theorem}
If algebra $H$ is LG-saturated then it is MT-saturated.
\end{theorem}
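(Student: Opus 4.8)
The plan is to reduce MT-saturation to LG-saturation through the bridge provided by Theorem~\ref{th:crit} and the special homomorphism $s:W(X^0)\to W(X)$ that is identical on $X$. Fix a finite $X\in\Gamma$ and an $X$-type $T\subset\Phi(X^0)$, i.e.\ a set of $X$-special formulas consistent with the elementary theory of $H$; I want to produce a single point $\mu:W(X)\to H$ with $T\subset Tp^H(\mu)$. Passing through the induced morphism $s_\ast:\Phi(X^0)\to\Phi(X)$, consider the set $s_\ast(T)=\{s_\ast u : u\in T\}\subset\Phi(X)$. By Theorem~\ref{th:crit}, $u\in Tp^H(\mu)$ if and only if $s_\ast u\in LKer(\mu)$, so it suffices to find one $\mu$ with $s_\ast(T)\subset LKer(\mu)$.

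The crucial step is to show that $s_\ast(T)$ together with $Th^X(H)$ generates a \emph{proper} filter in $\Phi(X)$. For a finite subset $\{u_1,\dots,u_k\}\subset T$ one computes, using the commutativity of $Val$ with $s_\ast$ established in Subsection~\ref{sub:val} (so that $Val^X_H(s_\ast u)=s_\ast Val^{X^0}_H(u)$) together with the description $s_\ast A=\{\mu: \mu s\in A\}$ of $s_\ast$ in $Hal_\Theta(H)$, that
\[
Val^X_H(s_\ast u_1\wedge\dots\wedge s_\ast u_k)=\{\,\mu:W(X)\to H \mid \mu s\in Val^{X^0}_H(u_1\wedge\dots\wedge u_k)\,\}.
\]
Since each $u_i$ is $X$-special, whether $\mu s$ satisfies $u_i$ depends only on the values $a_i=\mu(x_i)$ — the bound variables lie in $Y^0$ and are quantified away — and as $\mu$ ranges over $Hom(W(X),H)$ the tuple $(a_1,\dots,a_n)$ ranges over all of $H^n$. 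Hence this value set is nonempty precisely when $H\models \exists x_1\cdots\exists x_n\,(u_1\wedge\dots\wedge u_k)$. But this closed formula lies in $Th(H)$ exactly because $T$ is consistent with $Th(H)$. Therefore every finite meet of elements of $s_\ast(T)$ has nonempty value set, so modulo $Th^X(H)$ the family $s_\ast(T)$ has the finite intersection property and generates a proper filter.

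Finally I would extend this proper filter to a Boolean ultrafilter $T'$ of $\Phi(X)$; since $T'\supset Th^X(H)$, the LG-saturation of $H$ yields a point $\mu:W(X)\to H$ with $T'=LKer(\mu)$. Then $s_\ast(T)\subset T'=LKer(\mu)$, and applying Theorem~\ref{th:crit} in the reverse direction gives $u\in Tp^H(\mu)$ for every $u\in T$, i.e.\ $T\subset Tp^H(\mu)$. As $X\in\Gamma$ was arbitrary, $H$ is MT-saturated. I expect the middle step to be the main obstacle: correctly translating the model-theoretic consistency of the type into nonemptiness of the value sets $Val^X_H(s_\ast u_i)$, which hinges on the behaviour of $X$-special formulas under $s_\ast$ and on identifying $\exists\bar x$-satisfaction in $H$ with nonempty values in $Hal^X_\Theta(H)$. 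The remaining steps — the ultrafilter extension and the two applications of Theorem~\ref{th:crit} — are routine once this correspondence is in place.
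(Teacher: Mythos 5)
Your proof is correct and takes essentially the same route as the paper's: fix the special homomorphism $s:W(X^0)\to W(X)$, push the type forward to $s_\ast T\subset\Phi(X)$, extend to a Boolean ultrafilter containing $Th^X(H)$, apply LG-saturation to write that ultrafilter as $LKer(\mu)$, and translate back to $T\subset Tp^H(\mu)$ via Theorem~\ref{th:crit}. The only difference is that you explicitly verify the finite intersection property (properness) of the filter generated by $s_\ast T$ and $Th^X(H)$, using the consistency of the type; the paper asserts this step implicitly when it "embeds the filter $s_\ast T$ into the ultrafilter $T_0$".
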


\begin{proof}
Let algebra $H$ be LG-saturated and $T$ be $X$-MT-type correlated with $Th^{X^0}(H)$. We can assume that the theory $Th^{X^0}(H)$ is contained in the set of formulas $T$.

Take a special homomorphism $s : W(X^0) \to W(X)$ and pass to $s_\ast : \Phi(X^0) \to \Phi(X)$. Take a formula $s_\ast u \in \Phi(X)$ for each formula $u \in T$ and denote the set of all such $s_\ast u$ by $s_\ast T$. This set is a filter in $\Phi(X)$ containing the elementary theory $Th^X(H)$, since, if $u \in Th^{X^0}(H)$ then $s_\ast u \in Th^{X}(H)$.

Further we embed the filter $s_\ast T$ into the ultrafilter $T_0$ in $\Phi(X)$ which contains the theory $Th^{X}(H)$. By the LG-saturation of the algebra $H$ condition, $T_0 = LKer(\mu)$ for some point $\mu : W(X) \to H$. Thus, $s_\ast u \in LKer (\mu)$ for each formula $u \in T$. Hence (Theorem \ref{th:crit}), $u \in Tp^H(\mu)$ for each $u \in T$, and $T \subset Tp^H(\mu)$. This gives MT-saturation of the algebra $H$.
\end{proof}

We do not know whether MT-saturation implies LG-saturation. It seems that not. If it is the case, then LG-saturation of an algebra $H$ is stronger than its MT-saturation.

\subsection{Correspondence between $u\in\Phi(X)$ and  $\widetilde u\in\Phi(X^0)$ }\label{sec:uvolna}

\begin{defn}
 A formula $u\in \Phi(X)$ is called correct, if there exists an $X$-special formula $\widetilde u$ in $\Phi(X^0)$ such that  for every point $\mu: W(X)\to H$ we have $u\in LKer \mu$ if and only if $\widetilde u\in T^H_p(\mu)$.
\end{defn}


Now, for the sake of completeness and  for the aims of clarity
we give a proof of the principal Theorem \ref{thm:zhi}
of G.Zhitomiskii  (see \cite{Zhitom_types} for the original exposition). This fact will be  essentially used in Theorem \ref{thm:zh} and Theorem \ref{thm:eq}. We hope this will help to 
reveal ties between two approaches to the idea of a type of a point: the one-sorted model theoretic approach and the multi-sorted logically-geometric approach. Note that the proofs are  sometimes  different from that of \cite{Zhitom_types}.

\begin{theorem} \label{thm:zhi}\cite{Zhitom_types} For every $X=\{x_1,\ldots,x_n\}$, every formula $u\in \Phi(X)$ is correct.
\end{theorem}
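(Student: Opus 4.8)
The plan is to produce $\widetilde u$ by a structural induction on the way $u$ is assembled inside $\Phi(X)$, checking the defining biconditional at each stage. The guiding idea is that $\widetilde u$ should be the \emph{same} formula as $u$, but read inside the one-sorted algebra $\Phi(X^0)$, with every bound occurrence pushed into $Y^0=X^0\setminus X$ and every free occurrence kept in $X$. Since $LKer(\mu)$ is a Boolean ultrafilter in $\Phi(X)$ and $Tp^H(\mu)$ is a complete $X$-MT-type, hence behaves like an ultrafilter on the $X$-special formulas, the biconditional $u\in LKer(\mu)\Leftrightarrow \widetilde u\in Tp^H(\mu)$ is automatically preserved under $\vee,\wedge,\neg$; the real content sits in the atomic and the quantifier cases.

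First I would record a reduction. Using the Halmos axioms of Definition \ref{def:ha} --- that each $s_\ast$ is a Boolean homomorphism (2), that it sends an equality to an equality, $s_\ast(w\equiv w')=(s(w)\equiv s(w'))$ (4a), and that it interacts with quantifiers through (3a),(3b) --- one can drive every operation of the form $s_\ast$ down to the generating equalities. This exhibits each $u\in\Phi(X)$ as a first-order formula built from atomic equalities (whose terms may involve auxiliary variables outside $X$) by means of $\vee,\wedge,\neg$ and the quantifiers $\exists x$, and I would take this representation as the object of the induction. The base case is an equality $u=(w\equiv w')$ with $w,w'\in W(X)$: since $X\subset X^0$, the same expression is an $X$-special formula $\widetilde u=(w\equiv w')$ of $\Phi(X^0)$, and both $u\in LKer(\mu)$ and $\widetilde u\in Tp^H(\mu)$ assert exactly $w^\mu=w'^\mu$. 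For the Boolean steps I set $\widetilde{u_1\vee u_2}=\widetilde{u_1}\vee\widetilde{u_2}$ and $\widetilde{\neg u}=\neg\widetilde u$; these remain $X$-special and the biconditional survives because both $LKer(\mu)$ and $Tp^H(\mu)$ respect the Boolean operations.

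The heart of the argument is the quantifier case $u=\exists x_i\,u'$. Here I pick a variable $y\in Y^0$ not occurring in $\widetilde{u'}$, rename $x_i$ to $y$, and set $\widetilde u=\exists y\,(\sigma_\ast\widetilde{u'})$, where $\sigma:x_i\mapsto y$. This $\widetilde u$ is $X$-special, since $y$ is bound and lies in $Y^0$ while the remaining free variables stay in $X$. To verify the biconditional I would unwind the semantics on both sides. On the LG side, $\exists x_i\,u'\in LKer(\mu)$ means $\mu\in\exists x_i\,Val^X_H(u')$, i.e. there is $\nu:W(X)\to H$ agreeing with $\mu$ off $x_i$ with $u'\in LKer(\nu)$. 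On the MT side, $\widetilde u\in Tp^H(\mu)$ means there is $b\in H$ making $\widetilde{u'}$ true at the point whose $i$-th coordinate is $b$ and whose other coordinates are $\mu(x_j)$; that point is exactly such a $\nu$ with $\nu(x_i)=b$. The inductive hypothesis applied to $u'$ identifies $u'\in LKer(\nu)$ with $\widetilde{u'}\in Tp^H(\nu)$, so the two existential conditions coincide.

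It is worth packaging this so the role of Theorem \ref{th:crit} is explicit: for a special homomorphism $s:W(X^0)\to W(X)$ identical on $X$ that theorem gives $\widetilde u\in Tp^H(\mu)\Leftrightarrow s_\ast\widetilde u\in LKer(\mu)$, so correctness of $u$ is equivalent to the identity $s_\ast\widetilde u=u$ in $\Phi(X)$ (using that, by residual simplicity, two formulas agree iff their $Val$-images agree for all $H$, Subsection \ref{sub:val}). From this viewpoint the theorem reduces to the surjectivity of $s_\ast$ onto $\Phi(X)$ restricted to $X$-special formulas, and the induction above is precisely the construction of a preimage. The main obstacle I expect is the bookkeeping in the quantifier case: choosing the fresh $Y^0$-variable coherently, keeping the formula $X$-special, and verifying that the side condition of (3b) is satisfied after the renaming so that the reduction of $s_\ast$ to the leaves is legitimate --- this is where Zhitomirski's careful variable management lives. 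Everything else is forced by the ultrafilter structure of the logical kernel and the completeness of the type.
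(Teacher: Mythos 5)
Your opening reduction is where the proof breaks. You claim that the Halmos axioms of Definition \ref{def:ha} allow every operation $s_\ast$ to be ``driven down to the generating equalities,'' so that each $u\in\Phi(X)$ can be presented as a formula built from equalities in $M_X$ by $\vee,\wedge,\neg$ and the quantifiers $\exists x$, $x\in X$, after which your induction has only atomic, Boolean and quantifier cases. This is false, and the paper says so explicitly (Subsection \ref{sub:hom}): the equalities $M_X$ do \emph{not} generate $\Phi(X)$. The multi-sorted algebra $\widetilde\Phi$ is generated by all the $M_Y$ jointly, and a formula of sort $X$ may essentially be of the form $s_\ast v$ with $v\in\Phi(Y)$, $Y\neq X$. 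Condition 3(b) of Definition \ref{def:ha} lets you commute $s_\ast$ past a quantifier only when $s$ sends the quantified variable to a \emph{fresh variable}; when $s$ maps the variables of $Y$ into words over $X$, the quantifier $\exists y$ sitting inside $v$ has no counterpart among the operations $\exists x$, $x\in X$, and cannot be eliminated. Concretely, take $\Theta=Grp$, $X=\{x\}$, $Y=\{y_1,y_2\}$, $v=\exists y_2(y_1\equiv y_2^2)$ and $s(y_1)=s(y_2)=x$. Then for a free group $H$ the set $Val^X_H(s_\ast v)$ is the set of squares of $H$, while every formula built from $M_X$ by Boolean operations and $\exists x$ has value in $\{\emptyset,\{1\},H\setminus\{1\},H\}$ (a one-variable equation cuts out $\{1\}$ or $H$, and $\exists x$ sends any nonempty set to all of $H$); since two formulas that take different values in some $H$ are distinct elements of $\Phi(X)$ (Subsection \ref{sub:val}), $s_\ast v$ admits no such presentation.

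Consequently your induction misses exactly the case on which the paper spends the final third of its proof: showing that the operations $s_\ast:\Phi(Y)\to\Phi(X)$ preserve correctness. This is the genuinely delicate step --- it requires extending $s$ to $s':W(Y\cup Z)\to W(X\cup Z')$ on the bound variables of $\widetilde v$, the commutative squares relating $s'_\ast$, $s_\ast$ and the special homomorphisms $s^1_\ast,s^2_\ast$, and Theorem \ref{th:crit} to move membership back and forth between types and logical kernels. Your atomic, Boolean and quantifier cases do agree with the paper's treatment (including the renaming of the bound variable into $Y^0$ and the use of completeness of the type for negation), and your closing observation about the role of Theorem \ref{th:crit} is sound; but as an induction over the generation of $\widetilde\Phi$ it is incomplete until the $s_\ast$ case is supplied, since correctness must be shown to be preserved by \emph{all} operations of the multi-sorted signature, not only those internal to the sort $X$.
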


\begin{proof} First of all, each equality $w=w'$, $w,w'\in W(X)$ is a correct formula. This follows from
$\widetilde{(w=w')}=(w=w')$.

Take two correct  formulas $u$ and $v$, both from $\Phi(X)$.  Show that  $u\wedge v$, $u\vee v$ and $\neg u$ are also correct. We have $\widetilde u$ and $\widetilde v$. Define
$$\widetilde {u\wedge v}=\widetilde u \wedge \widetilde v,$$
$$\widetilde {u\vee v}=\widetilde u \vee \widetilde v,$$
$$\widetilde {\neg u}=\neg\widetilde u.$$

By definition,  we have $u\in LKer \mu$ if and only if $\widetilde u\in T^H_p(\mu)$ for every point $\mu: W(X)\to H$. The same is true with respect to $v$ and $\neg u$. Let $u\vee v\in LKer \mu$ and, say, $u\in LKer \mu$. Then $\widetilde u\in T^H_p(\mu)$, and, hence, $\widetilde u \vee \widetilde v=\widetilde {u\vee v}\in T^H_p(\mu)$. Conversely, let $\widetilde {u\vee v}=\widetilde u \vee \widetilde v\in T^H_p(\mu).$ Suppose that $\widetilde u\in T^H_p(\mu)$. Then $u\in LKer \mu$, that is $u\vee v\in LKer \mu$. The similar proofs work for the correctness of the formulas $u\wedge v$  and $\neg u$. In the latter case one should use the completeness property  of a type: ${\neg u}\in T^H_p(\mu)$ if and only if $u \notin T^H_p(\mu)$.


Our next aim is to check that if the formula $u\in \Phi(X)$ is correct, then the formula $\exists xu\in \Phi(X)$ is also correct.

Beforehand, note that it is hard to define free and bounded variables in the algebra $\Phi(X)$. This is because of the
multi-sorted nature of $\Phi(X)$ and the presence in it of the formulas which include operations of the type $s_*$.
So, the syntactical definition of  $\exists xu\in\Phi(X)$ is a sort of problem and we will proceed from the semantical definition of this formula.

Namely, a point $\mu: W(X)\to H$ satisfies the formula  $\exists xu\in\Phi(X)$ if there exits a point $\nu: W(X)\to H$ such that $u\in LKer(\nu)$ and $\mu$ coincides with $\nu$ for every variable $x'\neq x$, $x'\in X$.

 Indeed, a point $\mu : W(X)\to H$ satisfies $\exists xu\in\Phi(X)$ if $\mu\in Val^X_H(\exists xu)=\exists x(Val^X_H(u))$ (see Subsection \ref{sub:qq}).
Denote the set $Val^X_H(u)$ in $Hal_\Theta^X(H)=Bool(W(X),H)$ by $A$. Then $\mu$ belongs to $\exists xA$. Using the definition of existential quantifiers in $Hal_\Theta^X(H)$ (Subsection \ref{ex:im}) and the fact that $u\in LKer(\nu)$  if and only if $\nu \in Val^X_H(u)$,  we arrive to the definition above.


 Since $u$ is correct, there exists an $X$-special formula $\widetilde u \in \Phi(X^0)$,
 $$\widetilde u =\widetilde{u}(x_1,\ldots,x_n, y_1, \ldots, y_m), \ x_i\in X,\  y_i\in Y^0=(X^0\setminus X),$$  such that
$\widetilde u\in T^H_p(\mu)$ if and only if $u\in LKer(\mu)$, where  $\mu: W(X)\to H$.

  Define

$$
\widetilde {\exists x u}=\exists x\widetilde u.
$$
The formula $\exists x\widetilde u$ is not $X$-special since $x$ is bound (we assume that $x$ coincides with one of $x_i$, say $x_n$). Take a variable $y\in X^0$, such that $y$ is different from each $x_i\in X$ and $y_j\in Y^0$.

Define  $\exists y\widetilde{u}_y$ to be a formula which coincides with $\exists x\widetilde u$ modulo replacement of $x$ by $y$.
 So, $\exists y\widetilde{u}_y$ has one less free variable and one more bound variable than $\exists x\widetilde u$.

Consider endomorphism $s$ of $W(X^0)$ taking $s(x)$ to $y$ and leaving all other variables from $X^0$ unchanged. Let $s_*$ be the corresponding automorphism of the one-sorted Halmos algebra $\Phi(X^0)$. Then $s_*(\exists x\widetilde u)=\exists s_*(x) s_*(\widetilde u)=\exists y\widetilde{u}_y$.


Define
$$
\widetilde {\exists x u}=\exists y\widetilde{u}_y.
$$

Thus, in order to check that $\exists xu$ is correct, we need to verify that
 for every $\mu: W(X)\to H$ the formula $\exists xu$ lies in $LKer (\mu)$ if and only if $\exists y\widetilde{u}_y\in T^H_p(\mu)$.

Let $\exists xu$ lies in $LKer (\mu)$. Thus, there exits a point $\nu: W(X)\to H$ such that $u\in LKer(\nu)$ and $\mu$ coincides with $\nu$ for every variable $x'\neq x$, $x'\in X$.
  Consider $X_y=\{x_1,\ldots, x_{n-1}, y\}$.


  We have points $\mu:W(X)\to H$, $\mu': X_y\to H$ where $\mu'(x_i)=\mu(x_i)=a_i$, and
  $\mu'(y)$ is an arbitrary element $b$ in $H$. We have also $\nu: W(X)\to H$ and $\nu': X_y\to H$, where $\nu'(x_i)=\nu(x_i)$, and
  $\nu'(y)=\nu(x_n)$. So, $\nu$ and $\nu'$ have the same images. Denote it $(a_1,a_2,\ldots, a_{n-1},a_n)$, $a_i\in H$, i.e., $\nu'(y)=a_n$.

  Take
  $$\widetilde u_y =\widetilde{u}(x_1,\ldots,x_{n-1},y, y_1, \ldots, y_m), $$
  Since the formula $\exists y\widetilde{u}(a_1,\ldots,a_{n-1},b, y_1, \ldots, y_m) $ is closed for any $b$, then either it is satisfied on any point $\mu'$, or no one of   $\mu'$ satisfies this formula. We can take $b=a_n$, that is $\mu'=\nu'$. Since $\nu$ and $\nu'$ have the same images, and $u$ is correct, the point $\nu'$ satisfies $\widetilde{u}_y$. Then $\nu'$ satisfies  $\exists y\widetilde{u}_y$. Hence $\exists y\widetilde{u}(x_1,\ldots,x_{n-1},y, y_1, \ldots, y_m) $ is satisfied on $\mu'$ for any $b$. This means that $\exists y\widetilde{u}_y\in T^H_p(\mu')$ for every $\mu'$.
  We can take $\mu'$ to be $\mu$. Then $\widetilde{\exists x u}\in T^H_p(\mu).$

  Conversely, let  $\widetilde{\exists x u}\in T^H_p(\mu)$.
    Take a point $\nu: W(X)\to H$ such that $\nu(x_i)=\mu(x_i)$,   $i=1,\ldots,{n-1}$, $\nu(x_n)=\nu(y)$. We have $\widetilde u\in T^H_p(\nu)$. Since $\widetilde u$ is correct, then $u$ in $LKer(\nu)$. The points $\mu$ and $\nu$ coincide on all $x_i$, $i\neq n$. Thus $\exists u$ belongs to  $LKer(\mu)$.

    \medskip


    \medskip

    It remains to check that the operation $s_*$ respects correctness of formulas. Let $X=\{x_1,\ldots,x_n\}$, $Y=\{y_1,\ldots,y_m\}$, and a morphism $s: W(Y)\to W(X)$ be given. Take the corresponding $s_*:\Phi(Y)\to \Phi(X)$.  Given $v\in \Phi(Y)$ consider $u=s_*v$ in $\Phi(X)$. We shall show that if $v$ is $Y$-correct then $u$ is $X$-correct.

   We have $u\in LKer(\mu)$ , $\mu:W(X)\to H$ if and only if $v\in LKer(\nu)$, $\nu:W(Y)\to H$ for $\mu s=\nu$.
   Indeed, $u=s_*v\in LKer(\mu)$ means that $\mu\in Val^X_H(s_* v)=s_*Val^Y_H(v)$ and thus, $\mu s \in Val^Y_H(v)$. Hence,  for $\nu=\mu s$ we have $v \in LKer(\nu)$. Conversely, let $v \in LKer (\nu)$ and $\mu s = \nu\in Val_H^Y(v)$. We have $\mu \in s_*Val_H^Y(v)=Val_H^X(s_*v)=Val_H^X(u)$ and $u\in LKer (\mu)$.

    Note that morphism $s_*:\Phi(Y)\to \Phi(X)$ is a homomorphism of Boolean algebras. Suppose that $v\in \Phi(Y)$ is correct. We have
    $$\widetilde v= \widetilde{v}(y_1,\ldots, y_m, z_1,\ldots,z_t),$$
    where all $z_i$ are bound and belong to $Z=\{z_1,\ldots,z_t\}$. All free variables in $\widetilde v$ belong to $Y$ (it is assumed that not necessarily all variables from $Y$ occurs in $\widetilde v$). In this sense $\widetilde v$ is $Y$-special.

    We will define also the formula $\widetilde u$ and show that in our situation $ \widetilde u\in Tp^H(\mu)$ if and only if $ \widetilde v\in Tp^H(\nu).$

    Consider $Z'=\{z'_1,\ldots, z'_t\}$, where all $z'_i$ do not belong to $X$. Take the free algebras $W(X\cup Z')$ and $W(Y\cup Z)$. Define homomorphism $s':W(Y\cup Z)\to W(X\cup Z')$ extending $s:W(Y)\to W(X)$ by $s'(z_i)=z'_i.$ The commutative diagram of homomorphisms takes place:

$$
\CD
W(Y\cup Z) @> s' >> W(X\cup Z')\\
@V  s^1 VV @VV s^2 V\\
W(Y) @>s>> W(X).
\endCD
$$

Here $s^1$ and  $s^2$ are special homomorphisms which act identically on $Y$ and $X$, respectively. The corresponding commutative diagram of morphisms of algebras of formulas is as follows:
$$
\CD
\Phi(Y\cup Z) @> {s'}_\ast >> \Phi(X\cup Z')\\
@V  s^1_\ast VV @VV s^2_\ast V\\
\Phi(Y) @>s_\ast>> \Phi(X).
\endCD
$$
This diagram is commutative due to the fact that the product of morphisms of algebras of formulas corresponds to the product of homomorphisms of free algebras. Apply the diagram to $Y$-special formula $\widetilde v$ which belongs to the algebra $\Phi(Y\cup Z)$. Then, $s^2_\ast {s'}_\ast \widetilde v = s_\ast s^1_\ast \widetilde v$. Assume that $\widetilde u = {s'}_\ast \widetilde v$. Here, $\widetilde u$ is an $X$-special formula, contained in the algebra $\Phi(X\cup Z')$. We need to prove that for any point $\mu:W(X) \to H$ the inclusion $\widetilde u \in Tp^H (\mu)$ holds if and only if $u \in LKer(\mu)$.

We use the criterion from Section \ref{sec:type} (Theorem \ref{th:crit}): $\widetilde u \in Tp^H (\mu)$ if and only if $s^2_\ast \widetilde u \in LKer(\mu)$. Let us prove the latter inclusion. The similar criterion is valid for the formula $\widetilde v$. Since the formula $v$ is correct, then $\widetilde v \in Tp^H (\nu)$, where $\nu = \mu s$. Hence, $s^1_\ast \widetilde v \in LKer(\nu)$, which means that the point $\nu$ belongs to the set $Val^Y_H(s^1_\ast \widetilde v)$. Since $\nu = \mu s$, then $\mu \in Val^X_H(s_\ast s^1_\ast \widetilde v) = Val^X_H(s^2_\ast {s'}_\ast \widetilde v) = Val^X_H(s^2_\ast \widetilde u)$. This leads to the inclusion $s^2_\ast  \widetilde u \in LKer(\mu)$, which gives $\widetilde u \in Tp^H (\mu)$.

The same reasoning in the opposite direction shows that the inclusion $\widetilde u \in Tp^H (\mu)$ is equivalent to that of $\widetilde v \in Tp^H (\nu)$.

It is worth to recall that we started from the fact $u \in LKer(\mu)$ if and only if $v \in LKer(\nu)$. But, $v \in LKer(\nu)$ because of the correctness of the formula $v$. Thus, $u \in LKer(\mu)$. Hence, the transition from $u$ to $\widetilde u$ guarantees the correctness of the formula $u$.

  Hence, the set of all correct $X$-formulas, for various $X$, respects all operations of the multi-sorted algebra $\widetilde \Phi$. Since $\widetilde \Phi$ is generated by equalities, which are correct, the subalgebra of all correct formulas in $\widetilde \Phi$ coincides with $\widetilde \Phi$. Thus every $u\in \widetilde \Phi(X)$ for every $X$, is correct.

\end{proof}

Consider a simple example illustrating the action of $s_\ast$. Take $Y=\{y_1,y_2\}$ and $X=\{x_1,x_2,x_3\}$ and let $s$ be a homomorphism $s:W(Y)\to W(X)$. Take also variables $z$ and $z'$ and extend $s$ to $s':W(Y \cup z)\to W(X\cup z')$ assuming $s'(z)=z'$.
We have also morphism $s'_\ast:\Phi(Y\cup z)\to \Phi(X\cup z')$. Take an equality $w(y_1,y_2,z)\equiv w'(y_1,y_2,z)$ in $\Phi(Y\cup Z)$.  Consider $\exists z(w\equiv w')$  and apply $s'_\ast$. We have
$$
s'_\ast(\exists z(w\equiv w'))=\exists z'(s'w\equiv s'w').
$$
Here $s' w =s'(w(y_1,y_2,z)=w(w_1,w_2,z'),$
where $w_i=s(y_1)=w_i(x_1,x_2,x_3)$ and

$$s'_*(\exists z(w\equiv w')=\exists z'(w(w_1(x_1,x_2,x_3), w_2(x_1,x_2,x_3),z')$$

$$\equiv w'( w_1(x_1,x_2,x_3), w_2(x_1,x_2,x_3),z')).$$

\subsection{LG and MT-isotypeness of algebras}\label{sec:isotyp}

The following 
theorem 
helps to clarify the notion of isotypeness of algebras.


\begin{theorem} \label{thm:zh} \cite{Zhitom_types} Let the points $\mu:W(X)\to H_1$ and $\nu:W(X)\to H_2$ be given. Then
$$
Tp^{H_1}(\mu)=Tp^{H_2}(\nu)
$$
if and only if
$$
LKer(\mu)=LKer(\nu).
$$
\end{theorem}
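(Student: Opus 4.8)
The plan is to derive both implications directly from the two results that precede the statement, which already bridge the one-sorted algebra $\Phi(X^0)$, where $Tp^H$ lives, and the domain $\Phi(X)$ of the multi-sorted $\widetilde\Phi$, where $LKer$ lives. Concretely, Theorem \ref{th:crit} transports membership questions from the MT side to the LG side through the correspondence $u \mapsto s_\ast u$ induced by a special homomorphism $s:W(X^0)\to W(X)$, while the correctness theorem, Theorem \ref{thm:zhi}, transports them back through the correspondence $u \mapsto \widetilde u$. Since each of the points $\mu$ and $\nu$ ranges over the same sort $X$, the same $s$ and the same $\widetilde u$ serve for both $H_1$ and $H_2$; this is the key observation that makes the argument symmetric.

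For the implication $LKer(\mu)=LKer(\nu)\Rightarrow Tp^{H_1}(\mu)=Tp^{H_2}(\nu)$, I would fix one special homomorphism $s:W(X^0)\to W(X)$ that is identical on $X$, so that $s_\ast u\in\Phi(X)$ for every $X$-special $u\in\Phi(X^0)$. By Definition \ref{def:tpMT} every element of either type is such an $X$-special formula, so Theorem \ref{th:crit} applies and gives $u\in Tp^{H_1}(\mu)\iff s_\ast u\in LKer(\mu)$ and $u\in Tp^{H_2}(\nu)\iff s_\ast u\in LKer(\nu)$. The hypothesis says these two logical kernels coincide as subsets of $\Phi(X)$, and $s_\ast u$ is one and the same element of $\Phi(X)$ in both statements; hence $u\in Tp^{H_1}(\mu)\iff u\in Tp^{H_2}(\nu)$, and the two types are equal.

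For the converse, $Tp^{H_1}(\mu)=Tp^{H_2}(\nu)\Rightarrow LKer(\mu)=LKer(\nu)$, I would invoke Theorem \ref{thm:zhi}: every $u\in\Phi(X)$ is correct, so there is an $X$-special $\widetilde u\in\Phi(X^0)$ with the property that for every algebra $H$ and every point $\eta:W(X)\to H$ one has $u\in LKer(\eta)\iff\widetilde u\in Tp^H(\eta)$. Applying this once with $\eta=\mu$ over $H_1$ and once with $\eta=\nu$ over $H_2$ yields $u\in LKer(\mu)\iff\widetilde u\in Tp^{H_1}(\mu)$ and $u\in LKer(\nu)\iff\widetilde u\in Tp^{H_2}(\nu)$. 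Since the two types agree, $\widetilde u$ belongs to one exactly when it belongs to the other, so $u\in LKer(\mu)\iff u\in LKer(\nu)$; as $u$ was arbitrary, $LKer(\mu)=LKer(\nu)$.

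The substantive content is carried entirely by Theorems \ref{th:crit} and \ref{thm:zhi}, so the only point demanding care---and the closest thing to an obstacle---is checking that the auxiliary data ($s$ in one direction, $\widetilde u$ in the other) depend only on the syntactic formula and on the fixed sorts $X\subset X^0$, and not on the ambient algebra. Because both constructions are purely syntactic, the same $s$ and the same $\widetilde u$ legitimately compare the behaviour of $\mu$ over $H_1$ against that of $\nu$ over $H_2$, which is exactly what the statement requires.
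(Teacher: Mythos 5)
Your proposal is correct and follows essentially the same route as the paper's own proof: the paper likewise derives the direction $Tp^{H_1}(\mu)=Tp^{H_2}(\nu)\Rightarrow LKer(\mu)=LKer(\nu)$ from the correctness theorem (Theorem \ref{thm:zhi}) via $u\mapsto\widetilde u$, and the converse direction from Theorem \ref{th:crit} via a special homomorphism $s$ and $u\mapsto s_\ast u$. Your explicit remark that $s$ and $\widetilde u$ are purely syntactic and hence independent of the ambient algebra is left implicit in the paper, but it is the right point to flag.
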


\begin{proof} Let the points $\mu:W(X)\to H_1$ and $\mu:W(X)\to H_2$ be given and let $
Tp^{H_1}(\mu)=Tp^{H_2}(\mu).$ Take $u\in LKer(\mu)$. Then $\widetilde u \in Tp^{H_1}(\mu)$ and, thus, $\widetilde u \in Tp^{H_2}(\nu). $ Hence, $u\in LKer(\nu)$. The same is true in the opposite direction.

Let, conversely, $LKer(\mu)=LKer(\nu).$ Take an arbitrary $X$-special formula $u$ in $Tp^{H_1}(\mu)$. Take a special homomorphism from $s:W(X^0)\to W(X)$. It corresponds the morphism $s_*:\Phi(X^0)\to\Phi(X).$ Then, using Theorem \ref{th:crit}, the formula $u\in Tp^H(\mu)$ if and only if $s_\ast  u \in LKer(\mu).$  Then $s_\ast  u \in LKer(\nu).$ Then $u\in Tp^H(\nu)$.
\end{proof}


\begin{defn}\label{defn:tp}
Given $X$, denote by $S^X(H)$ the set of MT-types of an algebra $H$, implemented (realized) by points in $H$. Algebras $H_1$ and  $H_2$ are called MT-isotypic if $S^X(H_1)=S^X(H_2)$ for any $X \in \Gamma$.
\end{defn}

Theorem \ref{thm:zh}  implies

\begin{corollary}\label{cor:lg}
Algebras $H_1$ and $H_2$ in the variety $\Theta$ are MT-isotypic if and only if they are LG-isotypic.
\end{corollary}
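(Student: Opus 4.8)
The plan is to read the corollary off Theorem~\ref{thm:zh} by unwinding the two definitions of isotypeness and matching them clause by clause. First I would put the MT-isotypeness condition into existential form. Since $S^X(H)$ is by definition the set of MT-types $Tp^H(\mu)$ realized by points $\mu:W(X)\to H$, the equality $S^X(H_1)=S^X(H_2)$ says exactly that for every $\mu:W(X)\to H_1$ there is a point $\nu:W(X)\to H_2$ with $Tp^{H_1}(\mu)=Tp^{H_2}(\nu)$, and, symmetrically, every MT-type realized in $H_2$ is realized in $H_1$. Thus Definition~\ref{defn:tp}, quantified over all $X\in\Gamma$, is a pair of matching ``for every point there exists a point with equal MT-type'' statements.

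Next I would transport each equality of MT-types to an equality of logical kernels via Theorem~\ref{thm:zh}, which asserts that for points $\mu:W(X)\to H_1$ and $\nu:W(X)\to H_2$ one has $Tp^{H_1}(\mu)=Tp^{H_2}(\nu)$ if and only if $LKer(\mu)=LKer(\nu)$. Applying this biconditional to each pair $(\mu,\nu)$ appearing in the existential statement above converts ``for every $\mu$ there is $\nu$ with equal MT-types'' into ``for every $\mu$ there is $\nu$ with $LKer(\mu)=LKer(\nu)$'', and the same in the reverse direction. Ranging over all $X\in\Gamma$, these are precisely the two clauses of Definition~\ref{def:iso}, i.e. LG-isotypeness. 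For the converse I would run the argument backwards: LG-isotypeness supplies, for each $\mu:W(X)\to H_1$, a point $\nu:W(X)\to H_2$ with $LKer(\mu)=LKer(\nu)$, and Theorem~\ref{thm:zh} upgrades this to $Tp^{H_1}(\mu)=Tp^{H_2}(\nu)$, so the realized MT-types of $H_1$ are among those of $H_2$, and symmetrically; hence $S^X(H_1)=S^X(H_2)$. Because both implications use only the single equivalence of Theorem~\ref{thm:zh}, the whole corollary is that equivalence applied under the quantifier prefix ``for all $X$, for every point there exists a point.''

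I do not expect a genuine obstacle, since all the analytic content is already sealed inside Theorem~\ref{thm:zh} (which itself rests on the correctness of formulas, Theorem~\ref{thm:zhi}). The only point needing care is the bookkeeping of quantifiers: one must check that the existential ``matching point'' implicit in the set equality $S^X(H_1)=S^X(H_2)$ lines up with the existential ``matching point'' in Definition~\ref{def:iso}, and that Theorem~\ref{thm:zh} is invoked with $H_1$ on one side and $H_2$ on the other in each application. Once this correspondence of quantifiers and ambient algebras is fixed, the biconditional transfers cleanly in both directions.
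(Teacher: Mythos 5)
Your proposal is correct and matches the paper's own derivation: the paper deduces the corollary directly from Theorem~\ref{thm:zh} by exactly this unwinding of Definition~\ref{defn:tp} and Definition~\ref{def:iso}, transferring each matched pair of points through the biconditional $Tp^{H_1}(\mu)=Tp^{H_2}(\nu)\Leftrightarrow LKer(\mu)=LKer(\nu)$. Your explicit attention to the quantifier bookkeeping is the only content the paper leaves implicit.
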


So, it doesn't matter which type (LG-type or MT-type) is used in the definition of isotypeness. Hence, by Theorem \ref{thm:lgiso}, algebras $H_1$ and $H_2$ in the variety $\Theta$ are MT-isotypic if and only if they are LG-equivalent.


If algebras $H_1$ and  $H_2$ are isotypic then they are locally isomorphic. This means that if $A$ is a finitely generated subalgebra in $H$, then there exists a subalgebra $B$ in $H_2$ which is isomorphic to $A$ and, similarly, in the direction from $H_2$ to $H_1$.

On the other hand, local isomorphism of $H_1$ and  $H_2$ does not imply their isotypeness: the groups $F_n$ and $F_m$, $m,n > 1$ are locally isomorphic, but they are isotypic only for $n=m$.

Isotypeness imply elementary equivalence of algebras, but the same example with $F_n$ and $F_m$ shows that the opposite is wrong.

In Section 2 we pointed out several problems related to isotypic algebras. Let us give some other problems:

\begin{problem} {
Let $H_1$ and $H_2$ be two finitely generated isotypic algebras. Are they always isomorphic?}
\end{problem}

In particular:

\begin{problem} {
Let $G_1$ and $G_2$ be two finitely generated isotypic groups.  Are they always isomorphic?}
\end{problem}

\begin{problem} {
Let $H_1$ be a finitely generated algebra and $H_2$ is an isotypic to it algebra. Is $H_2$ also finitely generated?}
\end{problem}

The next problem is connected with the previously named problems on
isotypeness and isomorphism of free algebras.

\begin{problem}\label{pr:31}
Let two isotypic finitely-generated free algebras $H_1$ and $H_2$ and two points $\mu: W(X)\to H_1$ and
$\nu: W(X)\to H_2$
be given. Let $LKer(\mu)=LKer(\nu)$. Is it true that there exists an isomorphism $\sigma: H_1\to H_2$ such that $\mu \sigma = \nu$?
\end{problem}

\subsection{LG and MT-geometries}\label{sub:mt}

Compare, first, different approaches to the notion of a definable set in the affine space $Hom(W(X),H)$.

 Suppose that  the variety of algebra $\Theta$, an algebra $H\in\Theta$ and the finite set $X=\{x_1,\ldots,x_n\}$ are fixed.

In the affine space $Hom(W(X),H)$ consider subsets $A$, whose points have the form $\mu:W(X)\to H$. Each point
$\mu:W(X)\to H$ has a classical kernel $Ker(\mu)$, a logical kernel $LKer(\mu)$ and a type $(Tp^H(\mu)$). Correspondingly, we have three different geometries: algebraic geometry ($AG$), logical geometry ($LG$), and the model-theoretic geometry ($MTG$).

For $AG$ consider a system $T$ of equations $w\equiv w'$, $w, w'\in W(X)$. For $LG$ we take a set of formulas $T$ in the algebra of formulas $\Phi(X)$. For $MTG$ we proceed from an $X$-type $T$. In all these cases the set can be infinite.

Now,

$\bullet$ A set $A$ in $Hom(W(X),H)$ is definable in $AG$ (i.e., $A$ is {\it an algebraic set}) if there exists $T$ in $W(X)$ such that $T'_H=A$, where
$$
T'_H=\{\mu | \ T\subset Ker(\mu) \}
$$

\medskip

$\bullet$ A set $A$ in $Hom(W(X),H)$ is definable in $LG$ (i.e., {\it $A$ is LG-definable}) if there exists $T$ in $\Phi(X)$ such that $T^L_H=A$, where
$$
T^L_H=\{\mu | \ T\subset LKer(\mu) \}=\bigcap_{u \in T}Val^{X}_H(u).
$$

\medskip

$\bullet$ A set $A$ in $Hom(W(X),H)$ is definable in $MTG$ (i.e., {\it $A$ is  MT-definable}) if there exists an $X$-type $T$  such that $T^{L_0}_H=A$, where
$$
T^{L_0}_H=\{\mu | \ T\subset Tp^H(\mu) \}=\bigcap_{u \in T}Val^{X_0}_H(u).
$$

Besides that, we  have three closures: $T''_H$ for $AG$, $T^{LL}_H$ for $LG$, and $T^{L_0L_0}_H$ for $MTG$. In the reverse direction the Galois correspondence for each of three cases above is as follows

 $$T=A'_H = \bigcap_{\mu \in A}Ker\mu).$$

 $$T=A^{L}_H = \bigcap_{\mu \in A}LKer\mu).$$

 $$T=A^{L_0}_H = \bigcap_{\mu \in A}Tp^H(\mu).$$

Correspondingly, we distinguish three types of equivalence relations on algebras from the variety $\Theta$.

\noindent
Algebras $H_1$ and $H_2$ are {\it algebraically equivalent} if
$$
T''_{H_1}=T''_{H_2}.
$$
Algebras $H_1$ and $H_2$ are {\it logically equivalent} if
$$
T^{LL}_{H_1}=T^{LL}_{H_2}.
$$
Algebras $H_1$ and $H_2$ are {\it $MT$-equivalent} if
$$
T^{L_0L_0}_{H_1}=T^{L_0L_0}_{H_2}.
$$

A natural question is

\begin{problem}\label{pr:df} {
Whether the notions of LG-definable and MT-definable sets coincide? }
\end{problem}


First we need to clarify some details. Take a special morphism $s:W(X^0)\to W(X)$ identical on the set $X \subset X^0$, $X \in \Gamma$. We have also $s_\ast:\Phi(X^0) \to \Phi(X)$. Define a set of formulas $s_\ast T = \{ s_\ast u | u \in T\}$.

\begin{theorem}\label{thm:lgmt} {
The equality $T^{L_0}_H = (s_\ast T)^L_H$ holds for for every $X$-type $T$.}
\end{theorem}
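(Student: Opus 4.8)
The plan is to reduce the claimed set equality to the pointwise correctness criterion of Theorem \ref{th:crit}, applied formula by formula. Both $T^{L_0}_H$ and $(s_\ast T)^L_H$ are subsets of $Hom(W(X),H)$, so it suffices to show that an arbitrary point $\mu:W(X)\to H$ lies in $T^{L_0}_H$ if and only if it lies in $(s_\ast T)^L_H$.

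First I would unfold the two membership conditions. By definition $\mu\in T^{L_0}_H$ means $T\subset Tp^H(\mu)$, that is, $u\in Tp^H(\mu)$ for every $u\in T$; likewise $\mu\in (s_\ast T)^L_H$ means $s_\ast T\subset LKer(\mu)$, that is, $s_\ast u\in LKer(\mu)$ for every $u\in T$, where $s_\ast T=\{s_\ast u\mid u\in T\}$.

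Next I would invoke the fact that $T$ is an $X$-type, so by Definition \ref{def:tpMT} every $u\in T$ is an $X$-special formula in $\Phi(X^0)$. Since $s$ is a fixed special homomorphism $W(X^0)\to W(X)$ identical on $X$, Theorem \ref{th:crit} applies to each such $u$ and to the given point $\mu$, yielding the equivalence $u\in Tp^H(\mu)$ if and only if $s_\ast u\in LKer(\mu)$. The same $s$ serves uniformly for all $u\in T$, so no coherence issue arises across the family $T$.

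Finally, taking the conjunction of this equivalence over all $u\in T$ converts the pointwise statement into $T\subset Tp^H(\mu)$ if and only if $s_\ast T\subset LKer(\mu)$, which is precisely $\mu\in T^{L_0}_H$ if and only if $\mu\in(s_\ast T)^L_H$. As $\mu$ was arbitrary, the two sets coincide. I do not expect a genuine obstacle here: all the analytic content, namely the interplay between the one-sorted type $Tp^H(\mu)$ in $\Phi(X^0)$ and the multi-sorted logical kernel $LKer(\mu)$ in $\Phi(X)$, is already packaged in Theorem \ref{th:crit}. The only point deserving care is the bookkeeping that every $u\in T$ is legitimately $X$-special, so that the criterion genuinely applies, together with the observation that the universal quantifier ``for every $u\in T$'' commutes with the established equivalence.
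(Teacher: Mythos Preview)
Your proposal is correct and matches the paper's own argument essentially step for step: both directions are obtained by unfolding the membership conditions and invoking Theorem~\ref{th:crit} for each $X$-special $u\in T$ to pass between $u\in Tp^H(\mu)$ and $s_\ast u\in LKer(\mu)$. If anything, your write-up makes the appeal to Theorem~\ref{th:crit} more explicit than the paper does.
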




\begin{proof}
Let $\mu \in T^{L_0}_H$. Then $T \subset T^H_P(\mu)$ and every formula $u \in T$ is contained in $T^H_P(\mu)$. Besides, $s_\ast u \in LKer(\mu)$ and $\mu \in Val^X_H(u)$.  We have  $\mu \in \bigcap_{u \in T}Val^X_H(u) = (s_\ast T)^L_H$.

Let now $\mu \in  (s_\ast T)^L_H$. Then for every $u \in T$ we have $\mu \in Val^X_H(s_\ast u)$ and $s_\ast u \in LKer(\mu)$. Hence, $u \in Tp^H (\mu)$. This gives $T \subset T^H_P(\mu)$ and $\mu \in T^{L_0}_H$.
\end{proof}


Moreover, the next theorem answers Problem \ref{pr:df} in affirmative.

\begin{theorem}\label{thm:eq} Let $A\subset Hom(W(X),H)$. The set $A$ is $LG$-definable if and only if $A$ is $MT$-definable.
\end{theorem}

\begin{proof}
Theorem \ref{thm:lgmt} implies that every $MT$-definable set is $LG$-definable. Prove the opposite.

 We  use Theorem \ref{thm:zhi}: 
 for every formula $u\in \Phi(X)$ there exists an $X$-special formula $\widetilde u \in \Phi(X^0)$ such that a point $\mu: W(X)\to H$ satisfies $\widetilde u$ if and only if it satisfies $u$. Let now the set $T^{L}_H=A$ be given. Every point $\mu$ from $A$ satisfies every formula $u\in T$. Given $T$ take $T'$ consisting of all $\widetilde u$ which correspond $u\in T$. The points $\mu\in A$ satisfy every formula from $T'$. This means that $T'$ is a consistent set of $X$-special formulas. Thus $T'$ is an $X$-type, such that $A\subset T'^{L_0}_H$.

 Let now the point $\nu$ lies in  $T'^{L_0}_H$. Then $\nu$ satisfies every formula $\widetilde u$. Hence it satisfies every formula $u\in T$. Thus $\nu$ lies in $T^L_H=A$. This means that
  $$
 T'^{L_0}_H =A
 $$
 and the theorem is proved.

\end{proof}





Consider now the case when algebra $H$ is logically homogeneous and $A$ is an $Aut(H)$-orbit over the point $\mu:W(X) \to H$. We have $A=(LKer(\mu))^L_H$. The equality $LKer(\mu) = LKer(\nu)$ holds if and only if a point $\nu$ belongs to $A$. The same condition is needed for the equality $Tp^H(\mu) = Tp^H(\nu)$. Now, $\nu \in (Tp^H(\mu))^{L_0}_H$ by the definition of  $L_0$. Thus, $A=(Tp^H(\mu))^{L_0}_H$. We proved that the orbit $A$ is MT-definable and LG-definable.

Recall that we defined two full sub-categories $K_\Theta(H)$ and $LK_\Theta(H)$ in the category $Set_\Theta(H)$. Let us take one more sub-category  denoted  by $L_0 K_\Theta(H)$. In each object $(X,A)$ of this category the set $A$ is an $X$-MT-type definable set. The category $L_0 K_\Theta(H)$ is a full subcategory in $L K_\Theta(H)$. In view of Theorem \ref{thm:eq} categories $LK_\Theta(H)$ and $L_0 K_\Theta(H)$ coincide.


\end{document}
21. A. Tsurkov. Automorphic equivalence of algebras//J. Algebra Comput. 17(5-6)
(2007)1263-1271.
22. A. Tsurkov. Àutomorphisms of the category of the free nilpotent groups of the fixed class
of nilpotency//J. Algebra Comput. 17(5-6) (2007)1273-1281.

20. B. Plotkin , G. Zhitomirski. On automorphisms of categories of universal algebras. //J.
Algebra Comput. 17(5-6) (2007)1115-1132

Automorphic Equivalence in the Classical Varieties of
Linear Algebras.
A.Tsurkov
Institute of Mathematics and

[9] A. Tsurkov, Automorphic equivalence of linear algebras,
http://arxiv.org/abs/1106.4853. Accepted in the Journal of Algebra
and Its Applications.

] G. Mashevitzky, B.M. Schein and G.I. Zhitomirski, Automorphisms of the semigroup
of endomorphisms of free inverse semigroups, Comm. Algebra 34(10) (2006) 3569{
3584.

G. Mashevitzky and B.M. Schein, Automorphisms of the endomorphism semigroup of
a free monoid or a free semigroup, Proc. Amer. Math. Soc. 131(6) (2003) 1655{1660.

Automorphisms of the endomorphism semigroup of a free algebra
Xiaosong Sun
School of Mathematics, Jilin University, Changchun 130012, China, International Journal of Algebra and Computation

Let Fn be one of the following algebras over k: a free non-
commutative non-associative algebra, a free commutative non-associative algebra,
a free anti-commutative non-associative algebra, a free Lie algebra, a free col-
or Lie superalgebra, a free Lie p-algebra and a free color Lie p-superalgebra. If
ƒÓ ¸ Aut End Fn, then ƒÓ is quasi-inner.
Remark 4.10. Corollary 4.9 recovers and unites some results in [3, 9, 12], and the
conclusions for free color Lie superalgebras, free Lie p-algebras and free color Lie
p-superalgebras are new.

[3] A. Berzins, The group of automorphisms of the semigroup of endomorphisms of
free commutative and free associative algebras, Internat. J. Algebra Comput. 17(5-6)
(2007) 941{949.

[9] R. Lipyanski and B. Plotkin, Automorphisms of categories of free modules and free
Lie algebras, arXiv:math.RA/0502212, [math.GM], 2005.
[11] G. Mashevitzky, B. Plotkin and E. Plotkin, Automorphisms of the category of free
algebras of varieties, Electron. Res. Announc. Amer. Math. Soc. 8 (2002) 1{10.

Proposition 3.7. Let F(X) be the free commutative monoid with a set X, |X| > 1,
of free generators. Every automorphism of End(F(X)) is inner.
Remark 3.8. Similarly, automorphisms of a free commutative semigroup are inner.

\bibitem[MPP]{MPP}
G.~Mashevitzky, B.~Plotkin, E.~Plotkin, Automorphisms of the category of free Lie algebras,
{\it J. Algebra}, {\bf 282:2} (2004) 490--512.
\bibitem[MPP1]{MPP1}
G.~Mashevitzky, B.~Plotkin, E.~Plotkin, Automorphisms of categories of free algebras of
varieties, {\it Electronic Research Announcements of AMS}, {\bf 8} (2002) 1--10.
\bibitem[MS]{MS}
G.~Mashevitzky, B.~Schein, Automorphisms of the endomorphism semigroup of a free monoid or
a free semigroup, {\it Proc. Amer. Math. Soc.}, {\bf 131} (2003) 1655--1660.

] B. Plotkin, Varieties of algebras and algebraic varieties. Categories of
algebraic varieties. Siberian Advanced Mathematics, Allerton Press, 7:2,
(1997), pp. 64 — 97.
[4] B. Plotkin, Some notions of algebraic geometry in universal algebra, Algebra
and Analysis, 9:4 (1997), pp. 224 — 248, St. Petersburg Math. J., 9:4,
(1998), pp. 859 — 879.

B. Plotkin, G. Zhitomirski, On automorphisms of categories of free algebras
of some varieties, Journal of Algebra, 306:2, (2006), pp. 344 — 367.

A. Tsurkov, Automorphic equivalence of algebras. International Journal of
Algebra and Computation. 17:5/6, (2007), pp. 1263—1271.

Y. Katsov, R. Lipyanski and B. Plotkin, Automorphisms of categories of free modules,
free semimodules and free Lie modules, Comm. Algebra 35(3) (2007) 931{952.
[8] R. Lipyanski, Automorphisms of the endomorphism semigroups of free linear algebras
of homogeneous varieties, Linear Algebra Appl. 429(1) (2008) 156{180.
[9] R. Lipyanski and B. Plotkin, Automorphisms of categories of free modules and free
Lie algebras, arXiv:math.RA/0502212, [math.GM], 2005.

A. Belov-Kanel, A. Berzins and R. Lipyanski, Automorphisms of the endomorphism
semigroup of a free associative algebra, Internat. J. Algebra Comput. 17(5-6) (2007)
923{939.
[2] A. Belov-Kanel and R. Lipyanski, Automorphisms of the endomorphism semigroup
of a polynomial algebra, J. Algebra 333(1) (2011) 40{54.
[3] A. Berzins, The group of automorphisms of the semigroup of endomorphisms of
free commutative and free associative algebras, Internat. J. Algebra Comput. 17(5-6)
(2007) 941{949.

))))))))))))))))))))))))((((((((((((((((((((((((((((((

We will define a set $X'\supset X$ such that the points $\mu$ and $\nu$ coincides on $X'$ for each variable distinct from $x\in X$. Let
 $$
 X'=\{X,y_1, \ldots, y_m\},
 $$
 where $y_1,\ldots,y_m$ are bound variables in $\widetilde u\in \Phi(X^0)$. Extend  the points $\mu$ and $\nu$ to points
 $W(X')\to H$, assuming that they coincide on  $y_1, \ldots, y_m$. Take a variable $y$ which does not belong to $X'$ and define $\mu(y)=\mu(x)$. Then
 $$
 \exists x\widetilde u=\exists y\widetilde{u}_y,
  $$
where  $\widetilde{u}_y$ coincides with $\widetilde u$ modulo replacement of $x$ by $y$.

Pust' $\exists x u$ in $Lker \mu$.

 Consider $X_y=\{x_1,\ldots, x_{n-1}, y\}$. Take the map $\mu': X_y\to H$ such that $\mu'(x_i)=\nu(x_i)=a_i$, and
  $\mu'(y)$ is an arbitrary element of $H$. According to \cite{KeC} either all maps of such kind are true in $H$, or non of them true. The second possibility is not realized since one can take $\mu'(y)=\nu(x_n)=a_n$. So, all points $\mu'$ are satisfied in $H$. This means that $\exists y\widetilde{u}_y\in T^H_p(\mu')$ for every $\mu'$.
  We can take $\mu'$ to be $\mu$. Then $\widetilde{\exists x u}$ is satisfied on $\mu$.

  Let $\widetilde {\exists x u}\in T^H_p(\mu)$ . V kachestve $\nu: W(X)\to H$ voz'mem tochku kot sovpadaet s mu na peremennyh x_1,\ldots, x_n-1. We have $\widetilde u$\in Type \nu$. A tak kak U s volnoj, pravil'naja formula, to iz etogo sleduet chto u lezhit v logicheskom jadre \nu. Tak kak mu i nu sovpadajut na mnozhestve peremennyh otlichnyh ot x_n. Otsjuda sleduet chto \exists u soderzhitsja v logicheskom jadre tochki \mu.

LLLLLLLLLLLLLLLLLLLLLLLLLLLLLLLLLLLLLLLLLLL

  111111111111111111

  Take a point $\nu': X_y\to H$ such that $\nu'(x_i)=\nu(x_i)=a_i$, and
  $\nu'(y)$ is an arbitrary element of $H$. According to \cite{CK} either all maps of such kind are true in $H$, or non of them true. The second possibility is not realized since one can take $\mu'(y)=\nu(x_n)=a_n$. So, all points $\mu'$ are satisfied in $H$. This means that $\exists y\widetilde{u}_y\in T^H_p(\mu')$ for every $\mu'$.
  We can take $\mu'$ to be $\mu$. Then $\widetilde{\exists x u}$ is satisfied on $\mu$. We can take


 Let

 JJJJJJJJJJJJJJJJJJJJJJJJJJJJJJJJJJJJJJJJJJJJJJ

 Note that if $\mu: W(X)\to H$ satisfies $\exists xu$ then $\exists y\widetilde{u}_y\in T^H_p(\mu)$. Indeed, consider $X_y=\{x_1,\ldots, x_{n-1}, y\}$. Take a map $\mu': X_y\to H$ such that $\mu'(x_i)=\mu(x_i)=a_i$,
  $\mu'(y)$ is an arbitrary element of $H$.

66666666666666



Consider the set $X_y=\{x_1,\ldots, x_{n-1},y\}$ in $X^0$ and take $Hom(W(X_y),H)$.
Take $\nu:W(X)\to H$. Define $\nu': W(X_y)\to H$ by $\nu'(x_1)=\nu(x_1),\ldots, \nu'(x_{n-1})=\nu(x_{n-1})$, $\nu'(y)=\nu(x_n)$.

 such that
$\widetilde u\in T^H_p(\mu)$ if and only if $u\in LKer(\mu)$, where  $\mu: W(X)\to H$.

6666666666666666666666666

Take a correct formula $u\in \Phi(X)$.
 Since $u$ is correct, there exists an $X$-special formula $\widetilde u \in \Phi(X^0)$,
 $$\widetilde u =\widetilde{u}(x_1,\ldots,x_n, y_1, \ldots, y_m), \ x_i\in X,\  y_i\in Y^0=(X^0\setminus X),$$
  Let this formula belongs to the $T^H{^X}_p(\nu)$. This means that
 $$\widetilde u =\widetilde{u}(a_1,\ldots,a_n, y_1, \ldots, y_m),
 $$
 is satisfied in $H$. Consider
 $$ \widetilde{u_1}  =\widetilde{u_1}(x_1,\ldots,x_{n-1},y, y_1, \ldots, y_m)
  $$
 where is distinct from all $x_i$ and $y_i$.

 Consider the set $X_y=\{x_1,\ldots, x_{n-1},y\}$ in $X^0$ and take $Hom(W(X_y),H)$.
Take $\nu:W(X)\to H$. Define $\nu': W(X_y)\to H$ by $\nu'(x_1)=\nu(x_1),\ldots, \nu'(x_{n-1})=\nu(x_{n-1})$, $\nu'(y)=\nu(x_n)$.

Let

 Then $\widetilde u_1 \in $
$\nu'$ satisfies $T^H{^X}_p(\nu)$.

 99999999999999999999999999999999999999999999

Define  $\exists y\widetilde{u}_y$ to be a formula which coincides with $\exists x\widetilde u$ modulo replacement of $x$ by $y$. So, $\exists y\widetilde{u}_y$ has one less free variable and one more bound variable than $\exists x\widetilde u$.

By definition the formula $\exists y\widetilde{u}_y$ is $X-$special, and, obviously, $\exists x\widetilde u=\exists y\widetilde{u}_y$. Now we define $\widetilde {\exists x u}$ to be
$$
\widetilde {\exists x u}=\exists x\widetilde u=\exists y\widetilde{u}_y.
$$

 Thus, we need to check that $\exists xu$ is correct, that is
 for every $\mu: W(X)\to H$ the formula $\exists xu$ lies in $LKer (\mu)$ if and only if $\exists x\widetilde u\in T^H_p(\mu)$. Recall that a point $\mu: W(X)\to H$ satisfies $\exists xu$ if there exits a point $\nu: W(X)\to H$ such that $u\in LKer(\nu)$ and $\mu$ coincides with $\nu$ for every variable $x'\neq x$, $x'\in X$. Since $u$ is correct the latter yields that one has to check that for a point $\mu$
and a point $\nu$ such that $\mu(x')=\nu(x')$ for every $x'\neq x$, $x'\in X$ we have
 $$
 \exists x\widetilde u\in Tp^H(\mu) \text{ if and only if } \widetilde u\in Tp^H(\nu).
 $$

It remains to check that the operation $s_*$ respects correctness of formulas. Let $X=\{x_1,\ldots,x_n\}$, $Y=\{y_1,\ldots,y_m\}$, and a morphism $s: W(Y)\to W(X)$ be given. Take the corresponding $s_*:\Phi(Y)\to \Phi(X)$.  Given $v\in \Phi(Y)$ consider $u=s_*v$ in $\Phi(X)$. We shall show that if $v$ is $Y$-correct then $u$ is $X$-correct.

   We have $u\in LKer(\mu)$ , $\mu:W(X)\to H$ if and only if $v\in LKer(\nu)$ for $\mu s=\nu$.
   Indeed, $u=s_*v\in LKer(\mu)$ means that $\mu\in Val^X_H(s_* v)=s_*Val^Y_H(v)$ and $\mu s \in Val^Y_H(v)$. Hence,  for $\nu=\mu s$ we have $v \in LKer(\nu)$. Conversely, let $v \in LKer (\nu)$ and $\mu s = \nu\in Val_H^Y(v)$. We have $\mu \in s_*Val_H^Y(v)=Val_H^X(s_*v)=Val_H^X(u)$ and $u\in LKer (\mu)$.

   Since $v$ is $Y$-correct, there exists
        $$\widetilde v= \widetilde{v}(y_1,\ldots, y_m, z_1,\ldots,z_t),$$
    where all $z_i$ are bounded. Extending $s:W(Y)\to W(X)$ to $s:W(X^0)\to W(X^0)$ assuming $s(y_i)=w_i(x_1,\ldots,x_n)$, $y_i\in Y$ and $s(z)=z$, $z\in \{X^0\setminus Y\}$. Define
    $$
    \widetilde u=s_*\widetilde v.
    $$
    Then, by axioms of Halmos algebra,
    $$
\widetilde u= \widetilde{u}(w_1,\ldots, w_m, z_1,\ldots,z_t),
$$
where all $z_i$ in $\widetilde u$ are bound by the same quantifiers as in $\widetilde v$. The formula $\widetilde u$ is $X$-special. 
Suppose $v$ lies in $LKer(\nu)$.
Since $v$ is $Y$-correct  $\widetilde v\in Tp^H(\nu)$, that is $\nu$ satisfies $\widetilde v$.
According to \cite{CK} (Proposition 1.3.16) the point $\nu$ satisfies $\widetilde v$ (that is, $\widetilde v\in Tp^H(\nu))$ if and only if $\widetilde v$ lies in every $LKer (\bar \nu)$. Here, $\bar \nu\in Hom(W(X^0),H)$ is any point such that $\bar \nu(y_i)=\nu(y_i)$ and $\bar \nu(z_i)$ is an arbitrary element in $H$, for all $z_i\in \{X^0\setminus Y\}$. Since $
    \widetilde u=s_*\widetilde v
    $, we have: $\widetilde v \in LKer (\bar \nu)$ if and only if $\widetilde u \in LKer (\bar \mu)$, where $\bar \nu\in Hom(W(X^0),H)$ is defined $\bar \mu s=\bar \nu$. Once again by \cite{CK} we have $\widetilde u\in Tp^H(\mu)$.
  (((((((((((((((((((((

   ((((((((((((((((((((((

    Take
    $$
    \widetilde u= \widetilde{v}(w_1,\ldots, w_m, z'_1,\ldots,z'_t),
    $$
        where    $s(y_i)=w_i(x_1,\ldots,x_n)$, $y_i\in Y$ and $s(z_i)=z_i'$.
         Then
        $$
    \widetilde u=s'_*\widetilde v,
    $$
     where $s'$ is the corresponding extension of the homomorphism $s$ to $z_1,\ldots,z_t$. We have  $\widetilde v\in Tp^H(\nu)$.  We shall show that $\widetilde u\in Tp^H(\mu)$.

     Consider the homomorphism $s^0:W(X^0)\to W(X)$, which acts identically on the set $X\subset X^0$. It corresponds
     $$
     s_*^0:\Phi(X^0)\to \Phi(X).
     $$
We have $
     s_*^0 \widetilde u\in \Phi(X)$.  From \cite{CK} follows that $\widetilde u\in Tp^H(\mu)$ if and only if
     $s_*^0\widetilde u\in LKer(\mu)$.  But the point $\mu$ satisfies the formula $s_*^0\widetilde u$ since if
     $\nu(y_i)=b_i\in H$, then
     $$
     \nu(y_i)=\mu s(y_i)=\mu w_i(x_1,\ldots,x_n)=w_i(a_1,\ldots,a_n).$$

        We used the fact that both $s_*$ and $s_*^0$ are Boolean homomorphisms which allows to ignore the bounded variables. Observe also that $\widetilde u\in Tp^H(\mu)$ assumes $\widetilde v\in Tp^H(\nu)$. Then $v\in LKer(\nu)$ which yields $u\in LKer(\mu)$.

       Hence, the set of all correct $X$-formulas, for various $X$, respects all operations of the multi-sorted algebra $\widetilde \Phi$. Since $\widetilde \Phi$ is generated by equalities, which are correct, the subalgebra of all correct formulas in $\widetilde \Phi$ coincides with $\widetilde \Phi$. Thus every $u\in \widetilde \Phi$ is correct.

-------------------

All these equalities generate an algebra $\Phi(X^0)$. Besides, the semigroup $End(W(X^0))$ acts in the Boolean algebra $\Phi(X^0)$ and we can speak of a polyadic algebra $\Phi(X^0)$. However, the elements $s \in End(W(X^0))$ and the corresponding $s_\ast$ are not included in the signature of the algebra $\Phi(X^0)$.

2. We consider an $X$-special formula $u=u(x_1, \ldots , x_n;y_1, \ldots , y_m)$ as a formula in $\Phi(X^0)$. All variables here occur in the corresponding equalities and quantifiers. By the definition  $X_0 \in X$  not necessarily participate in the formula $u$, but those who participate, and participate, freely occurring in $u$, occur also in $X$. All $y_i$
????

All variables  participating in the record of the formula $u$ do nor determine this formula, they occur in the record of equalities that are included in $u$.

Finally, we need to take into account that there should participate quantifiers which bound variables $y_1, \ldots , y_m)$.

4. One more important remark. The set of variables occurring  in the record of a formula $u$ determine also a subalgebra $\Phi(X \cup Y)$ in $\Phi(X^0)$ in which lies $u$. We mean here one-sorted logic of the algebra $\Phi(X^0)$.

At the same time, we can view algebra $\Phi(X \cup Y)$ as an object in the multi-sorted logic. Here, in particular, to every homomorphism $s : W(X \cup Y) \to W(X' \cup Y')$ corresponds a morphism $s_\ast : \Phi(X \cup Y) \to \Phi(X' \cup Y')$. For $u \in \Phi(X \cup Y)$ we have $s_\ast u \in \Phi(X' \cup Y')$. It is important that for the $X$-special formula $u$ the formula $s\ast u$ should be $X'$-special. This is the separate problem.

=====

-------------------

Consider a simple example. Take $\{y_1,y_2\}$ and $X=\{x_1,x_2,x_3\}$ and let $s$ be a homomorphism $s:W(Y)\to W(X)$. Take also variables $z$ and $z'$ and extend $s$ to $s':W(Y \cup z)\to W(X\cup z')$ assuming $s'(z)=z'$.
We have also morphism $s'_\ast:\Phi(Y\cup z)\to $\Phi(X\cup z')$. Take an equality $w(y_1,y_2,z)\equiv w'(y_1,y_2,z)$ in $\Phi(Y\cup Z)$.  Consider $\exists z(w\equiv w')$  and apply $s'_\ast$. We have
$$
s'_\ast(\exists z(w\equiv w'))=\exists z'(s'w\equiv s'w').
$$
Here $s' w =s'(w(y_1,y_2,z)=w(w_1,w_2,z'),$$
where w_i=s(y_1)=w_i(x_1,x_2,x_3)$ and $s'_z(\exists z(w\equiv w')=\exists z')$.

-------------------

Now we give some results about necessary and sufficient conditions for isomorphism of
categories.

Let $\Theta=Com-P$. If $H\in\Theta$ and $\sigma$ is an isomorphism of the field $P$ then
there is the twisted algebra $H^{\sigma}$. The following theorem was obtained by A.Berzins.
\begin{theorem}
Let $H_1$ and $H_2$ are algebras from $\Theta=Com-P$. The categories $K_{\Theta}(H_1)$ and
$K_{\Theta}(H_2)$ are correctly isomorphic if and only if for some $\sigma\in Aut(P)$ the
algebras $H^{\sigma}_{1}$ and $H_2$ are $AG$-equivalent.
\end{theorem}


Let, now, $\Theta=Ass-P$, let $H\in \Theta$. Denote by $H^{*}$ the algebra with the
multiplication $*$ defined as follows: $a*b=b\cdot a$.
\begin{theorem}
Let $H_1$ and $H_2$ are algebras from $\Theta=Ass-P$ such that $Var(H_1)=Var(H_2)=\Theta$.
The categories $K_{\Theta}(H_1)$ and $K_{\Theta}(H_2)$ are correctly isomorphic if and only
if for some $\sigma\in Aut(P)$ the algebras $(H^{\sigma}_{1})^{*}$ and $H_2$ are
$AG$-equivalent.
\end{theorem}

Let $\Theta=Mod-K$.
Here $K$ is a ring, not necessarily commutative, but with $IBN$ property. This means that
if $KX$ and $KY$ are free $K$-modules with the finite $X$ and $Y$, then they are isomorphic
if and only their cardinalities coincide, i.e.,  $|X|=|Y|$. In particular $K$ can be a
group algebra $PG$ of the group $G$ or the universal enveloping algebra $U(L)$ of the Lie
algebra $L$ over the field $P$. We do not assume that the $(\ast)$-condition is fulfilled.
This is the only case except $\Theta=Com-P$ when we do not need such a condition.

For given $K$-module $H$ take its annihilator $U$ in $K$. Consider an ideal $V$ such that
there is an isomorphism $\tau: K/U\to K/V$. If $V$ coincides with $U$ then $\tau$ is an
automorphism of $K/U$. The $K$-module $H$ we can consider also as a $K/U$-module and, using
$\tau$, as a $K/V$-module. This $K/V$-module can be lifted to a $K$-module. Denote it by
$H^\tau$. The ideal $V$ is the annihilator of $H^\tau$.

\begin{theorem}[\cite{Pl_AGinMod-K}]
The $K$-modules $H_1$ and $H_2$ have the isomorphic geometries if and only if for some
$\tau$ the modules $H_1^\tau$ and $H_2$ are geometrically equivalent.
\end{theorem}

There are similar results for the varieties $Lee-P$ and $Grp$.

--------

Denote the considered ??? (this ) lattice by $Lat_H(\Phi(X))$.  We can consider also the
lattice $Lat^*_H(\Phi(X))$ of all $H$-closed filters in the algebra $\Phi(X)$. Given $X\in
\Gamma$, both lattices $Lat_H(\Phi(X))$ and
 $Lat^*_H(\Phi(X))$ are anti-isomorphic distributive lattices.
We want to tie together the lattices for various $X\in\Gamma$.

 Consider two
functors:
$$LCl_H: Hal_{\Theta}^0 \to Lat,$$ \noindent
 and $$LCl_H^*: Hal_\Theta^0
\to poSet.$$
 Here $Lat$ denotes the category of lattices and
$poSet$  is the category of the partially ordered sets. The first functor is covariant
while the second one is a contravariant functor. We have $LCl_H(\Phi(X))=Lat_H(\Phi(X))$,
$X\in\Gamma$, i.e., $LCl_H(\Phi(X))$ is the lattice of all elementary sets in
$Hom(W(X),H)$. Analogously, $LCl_H^*(\Phi(X))$ is the lattice of all $H$-closed filters in
$\Phi(X)$.

\bibitem[Berzins-GeomEquiv]{Berzins_GeomEquiv}
A.~Berzins {\it Geometrical equivalence of algebras}, Internat. J.
Algebra Comput. 11 (2001), no. 4, 447--456.

\bibitem[BPP]{BPP} A.Berzins, B.Plotkin, E.Plotkin, Algebraic
geometry in varieties of algebras with the given algebra of
constants,
 Journal of Math. Sciences, {  102:3}, (2000), 4039 -- 4070.

{  5. Variety $\Theta=Mod-K$.}
Let $\Theta=Mod-K$, where 
$K$ is a ring, not necessarily commutative, but with $IBN$
property. This means that if $K X$ and $K Y$ are free $K$-modules
with the finite $X$ and $Y$, then they are isomorphic if and only
their cardinalities coincide, i.e.,  $|X|=|Y|$. In particular, $K$
can be a group algebra $PG$ of the group $G$ or the universal
enveloping algebra $U(L)$ of the Lie algebra $L$ over the field
$P$. 

For a given $K$-module $H$ take its annihilator $U$ in $K$.
Consider an ideal $V$ such that there is an isomorphism $\tau:
K/U\to K/V$. If $V$ coincides with $U$, then $\tau$ is an
automorphism of $K/U$. The $K$-module $H$ we can consider  as a
$K/U$-module and, using $\tau$, as a $K/V$-module. This
$K/V$-module can be lifted to a $K$-module. Denote it by $H^\tau$.
The ideal $V$ is the annihilator of $H^\tau$.

\begin{theorem}[\cite{Pl_AGinMod-K}, \cite{Pl-IJAC}]
The categories $K_{\Theta}(H_1)$ and $K_{\Theta}(H_2)$ are
correctly isomorphic if and only if for some $\tau$ the modules
$H_1^\tau$ and $H_2$ are geometrically equivalent.
\end{theorem}

 Now we consider specific categories $\Theta$ instead of arbitrary $\Theta$. We give some results about necessary and sufficient conditions providing isomorphism of
categories of algebraic sets. {th:gsim}

Let $\Theta=Com-P$. If $H\in\Theta$ and $\sigma$ is an isomorphism of the field $P$ then
there is the twisted algebra $H^{\sigma}$.
\begin{theorem}
Let $H_1$ and $H_2$ are algebras from $\Theta=Com-P$. The categories $K_{\Theta}(H_1)$ and
$K_{\Theta}(H_2)$ are correctly isomorphic if and only if for some $\sigma\in Aut(P)$ the
algebras $H^{\sigma}_{1}$ and $H_2$ are $AG$-equivalent.
\end{theorem}

Note that algebras from $Com-P$ are $AG$-equivalent if and only if they are have the same
quasi-identities.

Let, now, $\Theta=Ass-P$, let $H\in \Theta$. Denote by $H^{*}$ the algebra with the
multiplication $*$ defined as follows: $a*b=b\cdot a$.
\begin{theorem}
Let $H_1$ and $H_2$ are algebras from $\Theta=Ass-P$ such that $Var(H_1)=Var(H_2)=\Theta$.
The categories $K_{\Theta}(H_1)$ and $K_{\Theta}(H_2)$ are correctly isomorphic if and only
if for some $\sigma\in Aut(P)$ the algebras $(H^{\sigma}_{1})^{*}$ and $H_2$ are
$AG$-equivalent.
\end{theorem}

Let $\Theta=Mod-K$.
Here $K$ is a ring, not necessarily commutative, but with $IBN$ property. This means that
if $KX$ and $KY$ are free $K$-modules with the finite $X$ and $Y$, then they are isomorphic
if and only their cardinalities coincide, i.e.,  $|X|=|Y|$. In particular $K$ can be a
group algebra $PG$ of the group $G$ or the universal enveloping algebra $U(L)$ of the Lie
algebra $L$ over the field $P$. We do not assume that the $(\ast)$-condition is fulfilled.
This is the only case except $\Theta=Com-P$ when we do not need such a condition.

For given $K$-module $H$ take its annihilator $U$ in $K$. Consider an ideal $V$ such that
there is an isomorphism $\tau: K/U\to K/V$. If $V$ coincides with $U$ then $\tau$ is an
automorphism of $K/U$. The $K$-module $H$ we can consider also as a $K/U$-module and, using
$\tau$, as a $K/V$-module. This $K/V$-module can be lifted to a $K$-module. Denote it by
$H^\tau$. The ideal $V$ is the annihilator of $H^\tau$.

\begin{theorem}[\cite{Pl_AGinMod-K}]
The $K$-modules $H_1$ and $H_2$ have the isomorphic geometries if and only if for some
$\tau$ the modules $H_1^\tau$ and $H_2$ are geometrically equivalent.
\end{theorem}

99999999999999
Our next aim is to find out how  Theorems \ref{Th:eqv} and
\ref{Th3_InnAut} look for specific varieties.

Let us give some more definitions. In view of Corollary \ref{cor:lg} we rephrase  Definitions \ref{dfn:lh}--\ref{dfn:lr}

\begin{defn}
 {Let $\Theta$ be a variety of algebras. We call an algebra $H \in \Theta$ logically separable if for any  $H' \in \Theta$ isotypeness of $H$ and $H'$ implies their isomorphism.}
\end{defn}

\begin{defn}\label{dfn:lr}
 {The variety $\Theta$ is called logically regular if each free in $\Theta$ algebra $W(X)$, $X \in \Gamma$ is separable in $\Theta$.}
\end{defn}

\begin{defn}
 {
Algebra $H \in \Theta$ is called logically homogeneous if for every two points $\mu: W(X) \to H$ and $\nu: W(X) \to H$ the equality $Tp^{H}(\mu)=Tp^{H}(\nu)$ holds if and only if   the points $\mu$ and $\nu$ are conjugated by an automorphism $\sigma \in Aut(H)$, i.e., $\nu = \mu\sigma$.}
\end{defn}

Logical homogenity means also, that for any point $\mu: W(X) \to H$ its $Aut(H)$ orbit is a definable set in respect to the type $LKer(\mu)$. We have also algebraic homogenity which means that  $Ker(\mu) = Ker(\nu)$ implies that the points $\mu$ and $\nu$ are $Aut(H)$ conjugated.

\begin{defn}
 {
The variety $\Theta$ is called logically perfect if each free in $\Theta$ algebra $W(X)$, $X \in \Gamma$ is logically homogeneous.}
\end{defn}

The following theorem is valid:

\begin{theorem}\label{thm:lplr}
 {
If the variety $\Theta$ is  logically perfect, then it is logically regular.}
\end{theorem}

\begin{proof}
Let the variety $\Theta$ be  logically perfect and $W=W(X)$ a free in $\Theta$ algebra of the rang $n$, $X=\{x_1, \ldots, x_n\}$. Rewrite $W=H=<a_1, \ldots, a_n>$, where $a_1, \ldots, a_n$ are free generators in $H$. Let $H$ and $G \in \Theta$ be isotypic.

Take $\mu: W(X) \to H$ with $\mu(x_i)=a_i$. We have $\nu:W(X) \to G$ with $T^H_P(\mu)=T^G_P(\nu)$, $\nu(x_i)=b_i$, $B=<b_1, \ldots, b_n>$. The algebras $H$ and $B$ are isomorphic by the isomorphism $a_i \to b_i$, $i=1, \ldots, n$.

Indeed, $T^H_P(\mu)=T^G_P(\nu)$ implies $LKer(\mu)=LKer(\nu)$ and, hence, $Ker(\mu)=Ker(\nu)$. This gives the needed isomorphism $H \to B$.

Let us prove that $B=G$. Let $B\neq G$ and there is a $b\in G$ which doesn't lie in $B$.

Take a subalgebra $B'=<b,b_1, \ldots, b_n>$ in $G$ and a collection of variables $Y=\{y,x_1, \ldots, x_n\}$ with $\nu ': W(Y) \to G$, $\nu '(y)=b$, $\nu '(x_i)=\nu(x_i)=b_i$, $i=1, \ldots, n$.

We have $\mu ':W(Y) \to H$ with $T^H_P(\mu')=T^G_P(\nu')$. Let $\mu '(y)=a'$, $\mu '(x_i)={a'}_i$, $i=1, \ldots, n$. Let the algebras $H'=<a',{a'}_1, \ldots, {a'}_n>$ and $B'=<b,b_1, \ldots, b_n>$ be isomorphic.

Further we work with the equality $LKer(\mu ')=LKer(\nu ')$. Take a formula $u \in LKer(\mu)$ and pass to a formula $u' = (y \equiv y)\wedge u$. The point $(b_1, \ldots, b_n)$ satisfies the formula $u$ and, hence, the point $\nu '$ satisfies $u'$. Therefore, the point $\mu '$ satisfies $u'$ as well, and $u' \in LKer(\mu ')$.

Take now a point $\mu '': W(X) \to H$ setting $\mu ''(x_i)={a'}_i$, $i=1, \ldots, n$. The point $\mu'$ satisfies the formula $u'$ if and only if the point $\mu ''$ satisfies $u$. Hence, $LKer(\mu)=LKer(\mu '')$. Therefore, the point $\mu ''$ is conjugated with the point $\mu$ by some isomorphism $\sigma$.  Thus, the point $<{a'}_1, \ldots, {a'}_n>$ is a basis in $H$ and $a' \in <{a'}_1, \ldots, {a'}_n>$. This contradicts with $b \not\in <b_1, \ldots, b_n>$. So, $B=G$ and $H$ and $G$ are isomorphic.

\end{proof}

\subsection{Problems}
It seems that logical regularity of a variety $\Theta$ doesn't imply its logical perfectness, which leads to the problem

\begin{problem} {
Find a logically regular but not logically homogeneous variety $\Theta$.
In particular, consider this problem for different varieties of groups and varieties of semigroups.}
\end{problem}

Let us give some examples.

The variety of all groups, the variety of abelian groups and the variety of all  nilpotent groups of  class $n$ are logically perfect, and, hence, logically regular \cite{Houcine}, \cite{PerinSklinos}, \cite{Pillay}, \cite{Sklinos_1}, \cite{Zhitom_types}. The variety of all semigroups and the variety of inverse semigroups are logically regular and we need to check whether they are logically perfect.

The next problem goes in parallel with the previous one:

\begin{problem}
 {
What can be said about logical regularity and logical perfectness for the variety of all solvable groups of the derived length $n$.}
\end{problem}

Let us point some questions motivated by the example.

\begin{problem} {
Let $\Theta$ be a classical variety $Com-P$, a variety of commutative and associative algebras over a field $P$. The problem is to verify its logical regularity and logical perfectness.}
\end{problem}

This is one of the problems related to logical geometry in the classical $\Theta$. The other questions on this topic will be listed later.

The pointed problem leads to the
\begin{problem} {
Let $S$ be a semigroup and $P$ a field, both logically homogeneous. Whether it is true that the semigroup algebra is logically homogeneous as well.}
\end{problem}

We consider the problems of logical regularity and logical perfectness for the following varieties:

\begin{problem} {
The variety $Ass-P$ of associative algebras over a field $P$.}
\end{problem}

\begin{problem} {
The variety $Lee-P$ of Lee algebras over a field $P$.}
\end{problem}

\begin{problem} {
The variety of $n$-nilpotent associative algebras.}
\end{problem}

\begin{problem} {
The variety of $n$-nilpotent Lee algebras.}
\end{problem}

Other problems:

\begin{problem} {
Let $H_1$ and $H_2$ be two finitely generated isotypic algebras. Are they always isomorphic?}
\end{problem}

\begin{problem} {
Let $G_1$ and $G_2$ be two finitely generated isotypic groups.  Are they always isomorphic?}
\end{problem}

\begin{problem} {
Let $H_1$ be a finitely generated algebra and $H_2$ is an isotypic to it algebra. Is $H_2$ also finitely generated?}
\end{problem}

\begin{problem} {
Give various examples of non-commutative isotypic but not isomorphic groups. In particular, two free groups of infinite rank.}
\end{problem}

The conditions when isotypeness does not imply isomorphism is equally interesting as the conditions when it does.


\subsection{Addendum}

Return to the transition $L^0$ and check that this transition indeed determines a Galois correspondence. Let the variety $\Theta$ and $H\in \Theta$ be fixed. Let $X$ be an infinite set and $X=\{x_1,\ldots, x_n\}$ be the subset in $X$. Take the affine space $Hom(W(X),H)$  and let $\mathbb P$ be the system of all subsets in $Hom(W(X),H)$.  Let  $\mathbb Q$ denote the system of all $X$-types $T$ in the algebra $\Phi(X^0)$.

For $T\subset \mathbb Q$ we have
$$
T^{L_0}_H=A=\{\mu | \ T\subset Tp^H(\mu). \}
$$
Correspondingly, for $A\subset \mathbb P$ we have
$$
T=A^{L_0}_H = \bigcap_{\mu \in A}Tp^H(\mu).
$$
$T$ is an $X$-type  in $\Phi(X^0)$, $T\subset \mathbb Q$ and consists of all $X$-special formulas such that  $A\subset Val^{X_0}_H(u)$.

Check now conditions of Galois correspondence.  Let $T_1\subset T_2$, check that $T^{L_0}_{1H}\supset T^{L_0}_{2H}$. Denote $T^{L_0}_{1H}=A$ and $T^{L_0}_{1H}=B.$ Let $\mu\in B$. Then $T_2\subset Tp^H(\mu)$. Since $T_1\subset T_2$, then $Tp^H(\mu)\supset T_1$ and $\mu \in A$. We have $B\subset A$.

Let now $A\subset B$. Check that
$$
A^{L_0}_H=T_1\supset B^{L_0}_H=T_2.
$$
Let $u\in T_2$. Then $Val^{X_0}_H(u)\supset B$. Then $Val^{X_0}_H(u)\supset A.$ Hence, $u\in T_1$. Thus, $T_2\subset T_1$.

It remains to show that $A\subset A^{L_0L_0}_H$ and $T\subset T^{L_0L_0}_H$. For $A$ in $Hom (W(X),H)$ denote $T=A^{L_0}_H$.
We have
$$A^{L_0 L_0}_H = T^{L_0}= \bigcap_{u \in T} Val^{X_0}_H(u).$$
By definition, $A$ lies in each $Val^{X_0}_H(u)$ and thus
$$
A\subset A^{L_0 L_0}_H.
$$
Check that for any $X$-type $T$ we have $T\subset T^{L_0L_0}_H$. Let $A=T^{L_0}_H$. We know that $A\subset Val^{X_0}_H(u)$
for every $u\in T$. Besides that $T^{L_0L_0}_H$ consists of all formulas $v$ such that $A\subset Val^{X_0}_H(v)$. Hence, every $u\in T$ lies in $T^{L_0L_0}_H$ and
$$
T\subset T^{L_0L_0}_H.
$$






We distinguish MT-definable sets and LG-definable sets. Let $A$ be a set of points in the space $Hom(W(X),H)$. This set is $X$-LG-definable if there exist a set of formulas $T$ in the algebra of formulas $\Phi(X)$ such that $A=T^L_H$. This also means that the point $\mu$ lies in $A$ if and only if it satisfies each formula $u \in T$. In other words, $T \subset LKer(\mu)$.  We have also
$$
T^{L}_{H}=\bigcap_{u\in T} Val^{X}_{H}(u).
$$

In the case of Model Theory we take an $X$-MT-type  for $T$. We set: a point $\mu \in A$ if $T\subset Tp^H(\mu)$. Let us explain this inclusion. The point $\mu$ satisfies a special formula $u=u(x_1, \ldots, x_n; y_1, \ldots, y_m)$ if the closed formula $u(a_1, \ldots, a_n; y_1, \ldots, y_m)$ holds in $H$. The inclusion $T\subset Tp^H(\mu)$ means that the point $\mu$ satisfies each formula $u \in T$.

Denote by $T^{L_0}_H$ the set of all points $\{\mu:W(X)\to H | T \subset Tp^H(\mu)\}$. We call a set $A$  MT-definable if there exists an $X$-type $T$ such that $A=T^{L_0}_H$. In the sequel we will show that each MT-definable set is LG-definable.

------------------------

\subsection{LG-types and MT-types}\label{lgmt}

Now, for the sake of completeness and  for the aim to make picture clear and transparent we give a proof of the principal Theorem \ref{thm:zhi} of G.Zhitomiskii  (see \cite{Zhitom_types} for the original exposition). This fact is essentially used in the proof of Theorem \ref{thm:eq}. We hope this will help to 
reveal ties between two approaches to the idea of a type of a point: the one-sorted model theoretic approach and the multi-sorted logically-geometric approach. Note that the proofs are  sometimes  different from that of \cite{Zhitom_types}.

First of all, let us prove the following important fact which clarifies some of the problems (Problems \ref{pr:df}, \ref{pr:k}, \ref{pr:is}) mentioned above.

\begin{theorem}\label{thm:eq} Let $A\subset Hom(W(X),H)$. The set $A$ is $LG$-definable if and only if $A$ is $MT$-definable.
\end{theorem}

\begin{proof}
As we know from Theorem \ref{thm:lgmt} every $MT$-definable set is $LG$-definable. Prove the opposite.

 We will use the following theorem from \cite{Zhitom_types}: for every formula $u\in \Phi(X)$ there exists an $X$-special formula $\widetilde u \in \Phi(X^0)$ such that a point $\mu: W(X)\to H$ satisfies $\widetilde u$ if and only if it satisfies $u$. Let now the set $T^{L}_H=A$ be given. Every point $\mu$ from $A$ satisfies every formula $u\in T$. Given $T$ take $T'$ consisting of all $\widetilde u$ which correspond $u\in T$. The points $\mu\in A$ satisfy every formula from $T'$. This means that $T'$ is a consistent set of $X$-special formulas. Thus $T'$ is an $X$-type, such that $A\subset T'^{L_0}_H$.

 Let now the point $\nu$ lies in  $T'^{L_0}_H$. Then $\nu$ satisfies every formula $\widetilde u$. Hence it satisfies every formula $u\in T$. Thus $\nu$ lies in $T^L_H=A$. This means that
  $$
 T'^{L_0}_H =A
 $$
 and the theorem is proved.

\end{proof}

\begin{corollary} The category $LK_\Theta(H)$ of all LG-definable sets coincides with the category $L_0K_\Theta(H)$ of all $MT$-definable sets.
\end{corollary}

Beforehand, we have proved that if the algebras $H_1$ and $H_2$ are isotypic, then the categories $LK_\Theta(H_1)$ and $LK_\Theta(H_2)$ are isomorphic. Now, the same fact is true with respect to categories $L_0K_\Theta(H_1)$ and $L_0K_\Theta(H_2)$.

All these provide a solution  of Problems \ref{pr:df}--\ref{pr:is}. However, we did not change the original exposition in the paper, since this insight  provides the ways of the development of the topic.

\begin{defn}
 A formula $u\in \Phi(X)$ is called correct, if there exists an $X$-special formula $\widetilde u$ in $\Phi(X^0)$ such that  for every point $\mu: W(X)\to H$ we have $u\in LKer \mu$ if and only if $\widetilde u\in T^H_p(\mu)$. Denote $LG^H(\mu)=LKer \mu$.
\end{defn}

The next theorem of G.Zhitomirskii  is used in the  proof of Theorem \ref{thm:eq}.

\begin{theorem} \label{thm:zhi} For every $X=\{x_1,\ldots,x_n\}$ every formula $u\in \Phi(X)$ is correct.
\end{theorem}

\begin{proof} First of all, each equality $w=w'$, $w,w'\in W(X)$ is a correct formula. This follows from
$\widetilde{(w=w')}=(w=w')$.

Take two correct  formulas $u$ and $v$, both from $\Phi(X)$.  Show that  $u\wedge v$, $u\vee v$ and $\neg u$ are also correct. We have $\widetilde u$ and $\widetilde v$. Define
$$\widetilde {u\wedge v}=\widetilde u \wedge \widetilde v,$$
$$\widetilde {u\vee v}=\widetilde u \vee \widetilde v,$$
$$\widetilde {\neg u}=\neg\widetilde u.$$

By definition,  we have $u\in LKer \mu$ if and only if $\widetilde u\in T^H_p(\mu)$ for every point $\mu: W(X)\to H$. The same is true with respect to $v$ and $\neg u$. Let $u\vee v\in LKer \mu$ and, say, $u\in LKer \mu$. Then $\widetilde u\in T^H_p(\mu)$, and, hence, $\widetilde u \vee \widetilde v=\widetilde {u\vee v}\in T^H_p(\mu)$. Conversely, let $\widetilde {u\vee v}=\widetilde u \vee \widetilde v\in T^H_p(\mu).$ Suppose that $\widetilde u\in T^H_p(\mu)$. Then $u\in LKer \mu$, that is $u\vee v\in LKer \mu$. The similar proofs work for the correctness of the formulas $u\wedge v$  and $\neg u$. In the latter case one should use the completeness property  of a type: ${\neg u}\in T^H_p(\mu)$ if and only if $u \notin T^H_p(\mu)$.


Our next aim is to check that if the formula $u\in \Phi(X)$ is correct, then the formula $\exists xu\in \Phi(X)$ is also correct.

Beforehand, note that it is hard to define free and bounded variables in the algebra $\Phi(X)$. This is because of the
multi-sorted nature of $\Phi(X)$ and the presence in it of the formulas which include operations of the type $s_*$.
So, the syntactical definition of  $\exists xu\in\Phi(X)$ is a sort of problem and we will proceed from the semantical definition of this formula.

Namely, a point $\mu: W(X)\to H$ satisfies the formula  $\exists xu\in\Phi(X)$ if there exits a point $\nu: W(X)\to H$ such that $u\in LKer(\nu)$ and $\mu$ coincides with $\nu$ for every variable $x'\neq x$, $x'\in X$.

 Indeed, a point $\mu : W(X)\to H$ satisfies $\exists xu\in\Phi(X)$ if $\mu\in Val^X_H(\exists xu)=\exists x(Val^X_H(u))$ (see Subsection \ref{sub:qq}).
Denote the set $Val^X_H(u)$ in $Hal_\Theta^X(H)=Bool(W(X),H)$ by $A$. Then $\mu$ belongs to $\exists xA$. Using the definition of existential quantifiers in $Hal_\Theta^X(H)$ (Subsection \ref{ex:im}) and the fact that $u\in LKer(\nu)$  if and only if $\nu \in Val^X_H(u)$,  we arrive to the definition above.


 Since $u$ is correct, there exists an $X$-special formula $\widetilde u \in \Phi(X^0)$,
 $$\widetilde u =\widetilde{u}(x_1,\ldots,x_n, y_1, \ldots, y_m), \ x_i\in X,\  y_i\in Y^0=(X^0\setminus X),$$  such that
$\widetilde u\in T^H_p(\mu)$ if and only if $u\in LKer(\mu)$, where  $\mu: W(X)\to H$.

  Define

$$
\widetilde {\exists x u}=\exists x\widetilde u.
$$
The formula $\exists x\widetilde u$ is not $X$-special since $x$ is bound (we assume that $x$ coincides with one of $x_i$, say $x_n$). Take a variable $y\in X^0$, such that $y$ is different from each $x_i\in X$ and $y_j\in Y^0$.

Define  $\exists y\widetilde{u}_y$ to be a formula which coincides with $\exists x\widetilde u$ modulo replacement of $x$ by $y$.
 So, $\exists y\widetilde{u}_y$ has one less free variable and one more bound variable than $\exists x\widetilde u$.

Consider endomorphism $s$ of $W(X^0)$ taking $s(x)$ to $y$ and leaving all other variables from $X^0$ unchanged. Let $s_*$ be the corresponding automorphism of the one-sorted Halmos algebra $\Phi(X^0)$. Then $s_*(\exists x\widetilde u)=\exists s_*(x) s_*(\widetilde u)=\exists y\widetilde{u}_y$.


Define
$$
\widetilde {\exists x u}=\exists y\widetilde{u}_y.
$$

Thus, in order to check that $\exists xu$ is correct, we need to verify that
 for every $\mu: W(X)\to H$ the formula $\exists xu$ lies in $LKer (\mu)$ if and only if $\exists y\widetilde{u}_y\in T^H_p(\mu)$.

Let $\exists xu$ lies in $LKer (\mu)$. Thus, there exits a point $\nu: W(X)\to H$ such that $u\in LKer(\nu)$ and $\mu$ coincides with $\nu$ for every variable $x'\neq x$, $x'\in X$.
  Consider $X_y=\{x_1,\ldots, x_{n-1}, y\}$.


  We have points $\mu:W(X)\to H$, $\mu': X_y\to H$ where $\mu'(x_i)=\mu(x_i)=a_i$, and
  $\mu'(y)$ is an arbitrary element $b$ in $H$. We have also $\nu: W(X)\to H$ and $\nu': X_y\to H$, where $\nu'(x_i)=\nu(x_i)$, and
  $\nu'(y)=\nu(x_n)$. So, $\nu$ and $\nu'$ have the same images. Denote it $(a_1,a_2,\ldots, a_{n-1},a_n)$, $a_i\in H$, i.e., $\nu'(y)=a_n$.

  Take
  $$\widetilde u_y =\widetilde{u}(x_1,\ldots,x_{n-1},y, y_1, \ldots, y_m), $$
  Since the formula $\exists y\widetilde{u}(a_1,\ldots,a_{n-1},b, y_1, \ldots, y_m) $ is closed for any $b$, then either it is satisfied on any point $\mu'$, or no one of   $\mu'$ satisfies this formula. We can take $b=a_n$, that is $\mu'=\nu'$. Since $\nu$ and $\nu'$ have the same images, and $u$ is correct, the point $\nu'$ satisfies $\widetilde{u}_y$. Then $\nu'$ satisfies  $\exists y\widetilde{u}_y$. Hence $\exists y\widetilde{u}(x_1,\ldots,x_{n-1},y, y_1, \ldots, y_m) $ is satisfied on $\mu'$ for any $b$. This means that $\exists y\widetilde{u}_y\in T^H_p(\mu')$ for every $\mu'$.
  We can take $\mu'$ to be $\mu$. Then $\widetilde{\exists x u}\in T^H_p(\mu).$

  Conversely, let  $\widetilde{\exists x u}\in T^H_p(\mu)$.
    Take a point $\nu: W(X)\to H$ such that $\nu(x_i)=\mu(x_i)$,   $i=1,\ldots,{n-1}$, $\nu(x_n)=\nu(y)$. We have $\widetilde u\in T^H_p(\nu)$. Since $\widetilde u$ is correct, then $u$ in $LKer(\nu)$. The points $\mu$ and $\nu$ coincide on all $x_i$, $i\neq n$. Thus $\exists u$ belongs to  $LKer(\mu)$.

    \medskip


    \medskip

    It remains to check that the operation $s_*$ respects correctness of formulas. Let $X=\{x_1,\ldots,x_n\}$, $Y=\{y_1,\ldots,y_m\}$, and a morphism $s: W(Y)\to W(X)$ be given. Take the corresponding $s_*:\Phi(Y)\to \Phi(X)$.  Given $v\in \Phi(Y)$ consider $u=s_*v$ in $\Phi(X)$. We shall show that if $v$ is $Y$-correct then $u$ is $X$-correct.

   We have $u\in LKer(\mu)$ , $\mu:W(X)\to H$ if and only if $v\in LKer(\nu)$, $\nu:W(Y)\to H$ for $\mu s=\nu$.
   Indeed, $u=s_*v\in LKer(\mu)$ means that $\mu\in Val^X_H(s_* v)=s_*Val^Y_H(v)$ and thus, $\mu s \in Val^Y_H(v)$. Hence,  for $\nu=\mu s$ we have $v \in LKer(\nu)$. Conversely, let $v \in LKer (\nu)$ and $\mu s = \nu\in Val_H^Y(v)$. We have $\mu \in s_*Val_H^Y(v)=Val_H^X(s_*v)=Val_H^X(u)$ and $u\in LKer (\mu)$.

    Note that morphism $s_*:\Phi(Y)\to \Phi(X)$ is a homomorphism of Boolean algebras. Suppose that $v\in \Phi(Y)$ is correct. We have
    $$\widetilde v= \widetilde{v}(y_1,\ldots, y_m, z_1,\ldots,z_t),$$
    where all $z_i$ are bound and belong to $Z=\{z_1,\ldots,z_t\}$. All free variables in $\widetilde v$ belong to $Y$ (it is assumed that not necessarily all variables from $Y$ occurs in $\widetilde v$). In this sense $\widetilde v$ is $Y$-special.

    We will define also the formula $\widetilde u$ and show that in our situation $ \widetilde u\in Tp^H(\mu)$ if and only if $ \widetilde v\in Tp^H(\nu).$

    Consider $Z'=\{z'_1,\ldots, z'_t\}$, where all $z'_i$ do not belong to $X$. Take the free algebras $W(X\cup Z')$ and $W(Y\cup Z)$. Define homomorphism $s':W(Y\cup Z)\to W(X\cup Z')$ extending $s:W(Y)\to W(X)$ by $s'(z_i)=z'_i.$ The commutative diagram of homomorphisms takes place:

$$
\CD
W(Y\cup Z) @> s' >> W(X\cup Z')\\
@V  s^1 VV @VV s^2 V\\
W(Y) @>s>> W(X).
\endCD
$$

Here $s^1$ and  $s^2$ are special homomorphisms which act identically on $Y$ and $X$, respectively. The corresponding commutative diagram of morphisms of algebras of formulas is as follows:
$$
\CD
\Phi(Y\cup Z) @> {s'}_\ast >> \Phi(X\cup Z')\\
@V  s^1_\ast VV @VV s^2_\ast V\\
\Phi(Y) @>s_\ast>> \Phi(X).
\endCD
$$
This diagram is commutative due to the fact that the product of morphisms of algebras of formulas corresponds to the product of homomorphisms of free algebras. Apply the diagram to $Y$-special formula $\widetilde v$ which belongs to the algebra $\Phi(Y\cup Z)$. Then, $s^2_\ast {s'}_\ast \widetilde v = s_\ast s^1_\ast \widetilde v$. Assume that $\widetilde u = {s'}_\ast \widetilde v$. Here, $\widetilde u$ is an $X$-special formula, contained in the algebra $\Phi(X\cup Z')$. We need to prove that for any point $\mu:W(X) \to H$ the inclusion $\widetilde u \in Tp^H (\mu)$ holds if and only if $u \in LKer(\mu)$.

We use the criterion from Section \ref{sec:type} (Theorem \ref{th:crit}): $\widetilde u \in Tp^H (\mu)$ if and only if $s^2_\ast \widetilde u \in LKer(\mu)$. Let us prove the latter inclusion. The similar criterion is valid for the formula $\widetilde v$. Since the formula $v$ is correct, then $\widetilde v \in Tp^H (\nu)$, where $\nu = \mu s$. Hence, $s^1_\ast \widetilde v \in LKer(\nu)$, which means that the point $\nu$ belongs to the set $Val^Y_H(s^1_\ast \widetilde v)$. Since $\nu = \mu s$, then $\mu \in Val^X_H(s_\ast s^1_\ast \widetilde v) = Val^X_H(s^2_\ast {s'}_\ast \widetilde v) = Val^X_H(s^2_\ast \widetilde u)$. This leads to the inclusion $s^2_\ast  \widetilde u \in LKer(\mu)$, which gives $\widetilde u \in Tp^H (\mu)$.

The same reasoning in the opposite direction shows that the inclusion $\widetilde u \in Tp^H (\mu)$ is equivalent to that of $\widetilde v \in Tp^H (\nu)$.

It is worth to recall that we started from the fact $u \in LKer(\mu)$ if and only if $v \in LKer(\nu)$. But, $v \in LKer(\nu)$ because of the correctness of the formula $v$. Thus, $u \in LKer(\mu)$. Hence, the transition from $u$ to $\widetilde u$ guarantees the correctness of the formula $u$.

  Hence, the set of all correct $X$-formulas, for various $X$, respects all operations of the multi-sorted algebra $\widetilde \Phi$. Since $\widetilde \Phi$ is generated by equalities, which are correct, the subalgebra of all correct formulas in $\widetilde \Phi$ coincides with $\widetilde \Phi$. Thus every $u\in \widetilde \Phi(X)$ for every $X$, is correct.

\end{proof}

\begin{theorem} Let the points $\mu:W(X)\to H_1$ and $\nu:W(X)\to H_2$ be given. Then
$$
Tp^{H_1}(\mu)=Tp^{H_2}(\nu)
$$
if and only if
$$
LKer(\mu)=LKer(\nu).
$$
\end{theorem}

\begin{proof} Let the points $\mu:W(X)\to H_1$ and $\mu:W(X)\to H_2$ be given and let $
Tp^{H_1}(\mu)=Tp^{H_2}(\mu).$ Take $u\in LKer(\mu)$. Then $\widetilde u \in Tp^{H_1}(\mu)$ and, thus, $\widetilde u \in Tp^{H_2}(\nu). $ Hence, $u\in LKer(\nu)$. The same is true in the opposite direction.

Let, conversely, $LKer(\mu)=LKer(\nu).$ Take an arbitrary $X$-special formula $u$ in $Tp^{H_1}(\mu)$. Take a special homomorphism from $s:W(X^0)\to W(X)$. It corresponds the morphism $s_*:\Phi(X^0)\to\Phi(X).$ Then, using Theorem \ref{th:crit}, the formula $u\in Tp^H(\mu)$ if and only if $s_\ast  u \in LKer(\mu).$  Then $s_\ast  u \in LKer(\nu).$ Then $u\in Tp^H(\nu)$.
\end{proof}

Consider a simple example. Take $Y=\{y_1,y_2\}$ and $X=\{x_1,x_2,x_3\}$ and let $s$ be a homomorphism $s:W(Y)\to W(X)$. Take also variables $z$ and $z'$ and extend $s$ to $s':W(Y \cup z)\to W(X\cup z')$ assuming $s'(z)=z'$.
We have also morphism $s'_\ast:\Phi(Y\cup z)\to \Phi(X\cup z')$. Take an equality $w(y_1,y_2,z)\equiv w'(y_1,y_2,z)$ in $\Phi(Y\cup Z)$.  Consider $\exists z(w\equiv w')$  and apply $s'_\ast$. We have
$$
s'_\ast(\exists z(w\equiv w'))=\exists z'(s'w\equiv s'w').
$$
Here $s' w =s'(w(y_1,y_2,z)=w(w_1,w_2,z'),$
where $w_i=s(y_1)=w_i(x_1,x_2,x_3)$ and

$$s'_*(\exists z(w\equiv w')=\exists z'(w(w_1(x_1,x_2,x_3), w_2(x_1,x_2,x_3),z')$$

$$\equiv w'( w_1(x_1,x_2,x_3), w_2(x_1,x_2,x_3),z')).$$




In the conclusion one more problem which is connected with the previously named problems on
isotypeness and isomorphism of free algebras.

\begin{problem}\label{pr:31}
Let two isotypic finitely-generated free algebras $H_1$ and $H_2$ and two points $\mu: W(X)\to H_1$ and
$\nu: W(X)\to H_2$
be given. Let $LKer(\mu)=LKer(\nu)$. Is it true that there exists an isomorphism $\sigma: H_1\to H_2$ such that $\mu \sigma = \nu$?
\end{problem}

=============

Let us present another formula for $T^{L_0}_H$. We have
$$T^{L_0}_H=\bigcap_{u \in T}Val^{X_0}_H(u).$$
Here $u=u(x_1, \ldots, x_n; y_1, \ldots, y_m)$ is a special formula in $T$ and $Val^{X_0}_H(u)$ is a set of points $\mu: W(X) \to H$ satisfying the formula $u$.

We proceed from fixed $H \in \Theta$ and $X=\{x_1, \ldots, x_n\} \in \Gamma$.
Let us continue the definition of a Galois correspondence.
Let $A$ be a subset in the points space $Hom(W(X),H)$. Let us relate to it an $X$-type $T$ by the rule $$T=A^{L_0}_H = \bigcap_{\mu \in A}Tp^H(\mu).$$
It is checked that a special formula $u$ belongs to $T$ if and only if $A \subset Val^{X_0}_H(u)$. In other words, each point $\mu \in A$ satisfies a special formula.
This definition allows to consider Galois closures of the types $T$ and sets $A$.

---------------

\begin{problem}\label{pr:is}\footnote{ See Theorem \ref{thm:eq} for a solution of Problems \ref{pr:df}--\ref{pr:is}} {
Let algebras $H_1$ and $H_2$ be isotypic. Whether it is true that the categories $L_0 K_\Theta(H_1)$ and $L_0 K_\Theta(H_2)$ are isomorphic.}
\end{problem}

We know that if algebras $H_1$ and $H_2$ are isotypic then the categories $L K_\Theta(H_1)$ and $L K_\Theta(H_2)$ are isomorphic. We need to check whether such isomorphism implies isomorphism of the corresponding subcategories.

We had defined Galois correspondence and, thus, we can speak about Galois closures for $A \subset Hom(W(X),H)$ and for $X$-type $T$ in MT. Namely, let us take $A^{L_0} = T$. Here $u \in T$ if $Val^{X_0}_H(\mu) \supset A$. Now,
$$A^{L_0 L_0} = T^{L_0}= \bigcap_{u \in T} Val^{X_0}_H(u).$$
Take $T^{L_0}_H=A$ for $X$-type $T$. Then $\mu:W(X)\to H$ lies in $A$ if $T \subset Tp^H(\mu)$, $u \in T^{L_0 L_0}$ if and only if $u \in A^{L_0}_H$, $Val^{X_0}_H(u)\supset A$.

It is clear that the equality $A^{L_0 L_0}=A$ means that the set $A$ is $MT$-definable. Analogously, the equality $A^{L L}=A$ means that the set $A$ is $LG$-definable. Thus, along with the Problem \ref{pr:k} we come up with the following problem

\begin{problem} {
 Whether it is true that the equality $A^{L_0 L_0}=A$ is equivalent to  $A^{L L}=A$.}
\end{problem}

This fact seems not to be true in general. However, this is true for logically noetherian algebras $H$.

\begin{defn} {
Two algebras $H_1$ and $H_2$ are called $MT$-equivalent  if $T^{L_0 L_0}_{H_1} = T^{L_0 L_0}_{H_2}$ for any $X \in \Gamma$ and $X$-type $T$.}
\end{defn}

\begin{problem} {
If $H_1$ and $H_2$ are $MT$-equivalent, then the categories of definable sets $L_0 K_\Theta(H_1)$ and $L_0 K_\Theta(H_2)$ are isomorphic.}
\end{problem}







We named the problems which arise naturally in the system of notions under consideration. We had not estimated the difficulty of these problems: some of them are difficult while others just need a straightforward check. We hadn't touched this issue.

9999999999999999999999999999999

999999999999999999999999

For every algebra $H\in\Theta$ consider a (contravariant) functor
$Cl_H:\Theta^0\to Set$. If $W=W(X)$ is an object of $\Theta^0,$
then $Cl_H(W)$ is the set of all $H$-closed congruences $T$ in
$W.$
If, further, $s:W(Y)\to W(X)$ is a morphism of $\Theta^0,$ then 
the mapping of sets $Cl_H(s):Cl_H(W(X))\to Cl_H(W(Y))$ 
is defined by the  rule: if $T$ is an $H$-closed congruence in
$W(X)$, then $Cl_H(s)(T)=s^{-1}T.$ It is always an $H$-closed
congruence in $W(Y).$

There is a contravariant functor
$$
Cl_{H}:\tilde\Phi \to C^{-1}_{\Theta}(H).
$$

Note, that $C^{-1}_{\Theta}(H)$ coincides with the category $Lat$.

------------

Let $\varphi_1, \varphi_2$ be two functors from a category $C_1$
to $C_2$. Fix a morphism $\nu: A\to B$ in $C_1$ and consider the
commutative diagram
$$
\CD
\varphi_1(A) @> s_{A} >> \varphi_2(A)\\
@V \varphi_1(\nu)  VV @VV \varphi_2(\nu) V\\
\varphi_1(B) @>s_{B} >> \varphi_2(B)
\endCD
$$
and the following formula holds true:
$$
\varphi_1(\nu)=s_{B}\varphi(\nu)s^{-1}_{A}.
$$
If the diagram above exists than the functors $\varphi_1$ and
$\varphi_2$ are isomorphic.

9999999999999999999
Vstavka 2 14.04.2013

We have the category $\tilde \Phi$ of all algebras $\Phi(X)$,
$X\in\Gamma$. To each $\Phi(X)$ corresponds the set of all
$H$-closed filters $T$ in $\Phi(X)$. Denote by $C^{X}_{\Theta}(H)$
the lattice of all such $T$. This lattice is dual to the lattice
$AG^{X}_{\Theta}(H)$ of all definable sets $A=T^{L}_{H}$.

Let $C_{\Theta}(H)$ be the category of all $C^{X}_{\Theta}(H)$
with morphisms $s_{*}: C^{Y}_{\Theta}(H)\to C^{X}_{\Theta}(H)$,
defined by the passage $s_{*}:T_2\to T_1$, $T_2\in
C^{Y}_{\Theta}(H)$, $T_1\in C^{X}_{\Theta}(H)$, where if $v\in
T_2$ then $s_{*}v\in T_1$.

$C^{-1}_{\Theta}(H)$ is the category of all $C^{X}_{\Theta}(H)$
with morphisms $s^{-1}_{*}: C^{X}_{\Theta}(H)\to
C^{Y}_{\Theta}(H)$, defined by the passage $s^{-1}_{*}:T_1\to
T_2$, $T_2\in C^{Y}_{\Theta}(H)$, $T_1\in C^{X}_{\Theta}(H)$.

The category of definable  sets $LG_{\Theta}(H_1)$ and
$LG_{\Theta}(H_2)$ are isomorphic if and only if the category
$C^{-1}_{\Theta}(H_1)$ and $C^{-1}_{\Theta}(H_2)$ are isomorphic.

There is a contravariant functor
$$
Cl_{H}:\tilde\Phi \to C^{-1}_{\Theta}(H).
$$

Note, that $C^{-1}_{\Theta}(H)$ coincides with the category $Lat$.
99999999999999999999

Objects of $C_\Theta(H)$ are posets $C^X_\Theta(H)$ of $H$-closed congruences on $W(X)$. In order to define morphisms  $C^Y_\Theta(H)$ to $C^X_\Theta(H)$ we proceed as follows. Let $T_2$ and $T_1$ be the sets of equalities on  $W(Y)$ and $W(X)$, respectively, such that $s_\ast(w\equiv w')\in T_1$, for any $(w\equiv w')$ in $T_2$.

Let $T_2$ and $T_1$ be sets of equalities in $W(Y)$ and $W(X)$, respectively. Define $s_\ast:T_2\to T_1$ by taking all $w\equiv w'$ in $T_2$ such that $s_\ast(w\equiv w')$ in $T_1$.
Take $Cl_H(T_1)=T_{1H}''$ and $s_\ast: T_2\to Cl_H(T_1)$. All $w\equiv w'$ in $W(Y)$ such that $s_\ast(w\equiv w')\in Cl_H(T_1)$ is an $H$-closed congruence in $W(Y)$ (see $(\diamondsuit)$).

-----------

 Now about morphisms.  Let $s: W(Y)\to W(X)$ be a morphism in $\Theta^0$. Define $s_\ast(w_1\equiv w_2)=(s(w_1)\equiv s(w_2))$. Let $T_2$ and $T_1$ be sets of equalities in $W(Y)$ and $W(X)$, respectively. Define $s_\ast:T_2\to T_1$ by taking all $w\equiv w'$ in $T_2$ such that $s_\ast(w\equiv w')$ in $T_1$.
Take $Cl_H(T_1)=T_{1H}''$ and $s_\ast: T_2\to Cl_H(T_1)$. All $w\equiv w'$ in $W(Y)$ such that $s_\ast(w\equiv w')\in Cl_H(T_1)$ is an $H$-closed congruence in $W(Y)$ (see $(\diamondsuit)$). So we have a morphism $s_\ast: Cl_H(T_2)\to Cl_H(T_1).$ Taking the opposite morphism
$$
\nu=s_\ast^{-1}: Cl_H(T_1)\to Cl_H(T_2),
$$
we have a morphism $\nu: Cl_H(T_1)\to Cl_H(T_2)$, which defines a contravariant functor $Cl_H$ of $\Theta^0$ to posets of $H$-closed congruences.

888888888888
\begin{proof}
Suppose that $
s=\alpha(\varphi):Cl_{H_1} \to Cl_{H_2} \cdot \varphi
$
is an isomorphism of functors. We shall define an isomorphism $\psi$ of the categories $AG_\Theta(H_1)$ and $AG_\Theta(H_2)$ of algebraic sets over $H_1$ and $H_2$. Since $Var(H_1)=Var(H_2)=\Theta$, it is enough to define isomorphism of the categories $C_\Theta(H_1)$ and $C_\Theta(H_2)$. Isomorphism of functors $
s=\alpha(\varphi):Cl_{H_1} \to Cl_{H_2} \cdot \varphi
$ gives rise to the commutative diagram

 where $\nu: W(Y)\to W(X)$, and  $s_{W(X)}$, $s_{W(Y)}$ are invertible morphisms in the category $PoSet$.
 More precisely, $s_{W(X)}$ assigns the poset $C_{\Theta}^{\varphi( W(X))}(H_2)$ of $H_2$-closed congruences on $\varphi(W(X))$ to the poset $C_{\Theta}^{X}(H_1)$ of $H_1$-closed congruences on $W(X)$; $s_{W(Y)}$ acts in a similar way.

\bigskip
8888888888888888888

  Let $\varphi$ be an automorphism of $\Theta^0$.
Consider the isomorphism $\alpha(\varphi)$ in details. Here
$\varphi: \tilde \Phi_{\Theta} \to \tilde\Phi_{\Theta}$ is the
automorphism of the category $\tilde\Phi$ such that
$\varphi(\Phi(Y))=\Phi(X)$. We have also $s: W(Y)\to W(X)$ and
$s_{*}: \Phi(Y)\to \Phi(X)$. For a set of formulas $T\subset
\Phi(Y)$ we have $Cl_{H_1}(T)=T^{LL}_{H_1}$ and $(Cl_{H_2}\varphi)
(T)=\varphi T^{LL}_{H_2}=(T_1)^{LL}_{H_2}$, where $T_1\subset
\Phi(X)$.

We have $s_{*}=\nu: T_2\to T_1$ and the diagram
$$
\CD
Cl_{H_1}(T_1) @> s_{T_1} >> Cl_{H_2}(T_1) \\
 @V Cl_{H_1}(\nu) VV @VV Cl_{H_2}(\nu) V\\
 Cl_{H_1}(T_2) @> s_{T} >> Cl_{H_2}(T_2)
\endCD
$$

Here horizontal arrows transform objects, vertical arrows
transform morphisms. Recall, $T_1\in \Phi(X)$, $T\in \Phi(Y)$,
$Cl_{H_1}(T_1)\subset Lat^{X}$, $Cl_{H_2}(T)\subset Lat^{Y}$.
\end{proof}
Apply now the ideas of Definition \ref{def:AG}
 to the situation of logical geometry.
Once again  consider a  functor
$Cl_H:\tilde \Phi\to Set$ for every
algebra $H\in\Theta$. If $\Phi(X)$ is an object of $\tilde \Phi,$
then $Cl_H(\Phi(X))$ is the lattice of all $H$-closed filters $T$ in
$\Phi(X)$. Let, further, $s_\ast:\Phi(Y)\to \Phi(X)$ be a morphism of $\Theta^0,$.
Take an $H$-closed filter in $\Phi(Y)$. Then $s_*T$ is the
set of all $u\in\Phi(X)$ such that $s_*u\in T$. Define $Cl_H(s_\ast)(T)=s_*T.$




\begin{defn}\label{def:simi} We call algebras $H_1$ and $H_2$ in $\Theta$ logically similar if there exists a commutative diagram

$$
\CD \tilde\Phi @[2]>  \varphi >> \tilde\Phi \\
@[2]/SE/ Cl_{H_1} // @.@. \; @/SW//  Cl_{H_2} / \\
@. Lat
\endCD
$$

Here $\tilde \Phi = (\Phi(X), \ X \in \Gamma) = Hal^0_\Theta$, $\varphi: \tilde\Phi \to \tilde\Phi$ is an automorphism of categories, $Cl_H(\Phi(X))$, $X \in \Gamma$ is the lattice of $H$-closed filters in $\Phi(X)$, $Cl_{H_1}$ and $Cl_{H_2}$ are the corresponding functors.

\end{defn}

Commutativity of the diagram means that there is an isomorphism of functors
$$
\alpha(\varphi): Cl_{H_1} \to Cl_{H_2}\varphi.
$$

Denote $\varphi(\Phi(X))=\Phi(Y)$. If $T$ is an $H_1$-closed filter in $\Phi(X)$, then $\alpha(\varphi)(T)=T^*$ is an $H_2$ closed filter in $\Phi(Y)$. Let now $(X,A)$ be an object of the category $LK_\Theta(H_1)$. We assume that $A=T^L_{H_1}$, where $T$ is an $H_1$ closed filter in $\Phi(X)$.

 Let $(Y,B)$ be the  corresponding  object in the category $LK_\Theta(H_2)$. Then $B=T^{*L}_{H_2}$.

 So we know how the isomorphism between $LK_\Theta(H_1)$ and $LK_\Theta(H_2)$ we are looking for, acts on objects.
 However, the definition of logical similarity does not allow to determine action of the needed isomorphism on morphisms.
 Some additional information is needed. For example, we need an information about automorphisms $\varphi$ of $\tilde\Phi$ and, possibly, we need to demand that $Var(H_1)=Var(H_2)=\Theta$.

So, we formulate

 \begin{problem}\label{pr:01}
 Find additional conditions on algebras $H_1$ and $H_2$, such that the categories $LK_\Theta(H_1)$ and $LK_\Theta(H_2)$ are isomorphic if and if $H_1$ and $H_2$ are logically similar.
\end{problem}

??????????????????

Vstavka 3, 22.04.14

{\bf $LG$-similarity of algebras in a variety $\Theta$.}

We have the category $\tilde \Phi= \tilde \Phi _{\Theta}$. The
objects of this category are algebras of formulas $\Phi(X)$, $X\in
\Gamma$. There is  covariant functor $\Theta^{0}\to \tilde \Phi
_{\Theta}$, for each $s: W(Y)\to W(X)$ we have $s_{*}: \Phi(Y)\to
\Phi(X)$.

Fix an algebra $H\in\Theta$. To the object $\Phi(X)$ corresponds
the set (lattice) $C^{X}_{\Theta}(H)$ of all $H$-closed filters
$T$.

Along with the set $C^{X}_{\Theta}(H)$ we consider the set
(lattice) $LG^{X}_{\Theta}(H)$, which consists of all $H$-closed
sets $A$ from $Hom(W(X),H)$, i.e., $A=A^{LL}_{H}$, $T=T^{LL}_{H}$
and $A=T^{L}_{H}$, $T=A^{L}_{H}$.

All $LG^{X}_{\Theta}(H)$ constitute the category of definable sets
$LG_{\Theta}(H)$. Morphisms are defined as follows. We have
$s_{*}: \Phi(Y)\to \Phi(X)$ and $A\in LG^{X}_{\Theta}(H)$, then
$B=\tilde s_{*} A$ is in $LG^{Y}_{\Theta}(H)$. So, we have
morphisms:
$$
\tilde s_{*}: LG^{X}_{\Theta}(H) \to LG^{Y}_{\Theta}(H).
$$

Let $T_1$ be a $H$-closed filter in $\Phi(X)$, i.e., $T_1\in
C^{X}_{\Theta}(H)$. Let $T_2$ be a set of all formulas $v\in
\Phi(Y)$ such that $s_{*}v\in T_1$. Then $T_2$ is a $H$-closed
filter in $\Phi(Y)$, i.e., $T_2\in C^{Y}_{\Theta}(H)$.
\marginpar{$T_2$ is $H$-closed. Verno???}

We have $T_1=A^{L}_{H}$, $T_{2}=B^{L}_{H}$, $B=\tilde s_{*} A$ and
$s^{-1}_{*}: C^{X}_{\Theta}(H)\to C^{Y}_{\Theta}(H)$.

Thus we have the category $C^{-1}_{\Theta}(H)$ of all
$C^{X}_{\Theta}(H)$ with morphisms $s^{-1}_{*}$.

We can say that the categories $C^{-1}_{\Theta}(H)$ and
$LG_{\Theta}(H)$ are isomorphic.

There is a contravariant functor
$$
Cl_{H}: \tilde \Phi \to C^{-1}_{\Theta}(H)
$$
which associate with $T_1\in C^{X}_{\Theta}(H)$ the set $T_2\in
C^{Y}_{\Theta}(H)$.

Earlier the similarity was considered for algebraic geometry (see
... ???). Theorem~\ref{th:Var} deals with logical similarity of
algebras.

The passage from the algebraic geometry to the logical one
presuppose the following question. Let $A$ be a set in
$Hom(W(X),H)$. We consider algebraic closure $A''_{H}$ of $A$ and
its logical closure $A^{LL}_{H}$. Is it possible to compare these
closures? What is the role of the algebra $H$?

If $T$ is a set of formulas in $\Phi(X)$, then $T''_{H}=T$ implies
$T^{LL}_{H}=T$, but $T^{LL}_{H}=T$ does not imply $T''_{H}=T$.
This means that the lattice $C^{X}_{\Theta}(H)$ contains all $T$
for which $T''_{H}=T$, but such $T$ does not constitute a
sublattice.

For the variety $Com-P$ there are necessary and sufficient
conditions for $AG$-similarity. The similar question
$LG$-similarity is open. A special case, if $\Theta$ is generated
by a logically noetherian algebra.

More reminders.
\begin{enumerate}
\item
 For every $\Theta$, the $LG$-equivalence of algebras implies
 their $LG$-similarity.

\item
 Automorphism $\varphi$ of a category $C$ is inner if $\varphi$ is
 isomorphic to the identity automorphism, i.e.,
 $s: 1\to \varphi$.

\item
 Every automorphism of $Grp^{0}$ is inner.

 \end{enumerate}

\begin{problem}
What can we say about groups which are $LG$-similar to the free
group $F_n$, $n>1$?
\end{problem}

21. A. Tsurkov. Automorphic equivalence of algebras//J. Algebra Comput. 17(5-6)
(2007)1263-1271.
22. A. Tsurkov. Àutomorphisms of the category of the free nilpotent groups of the fixed class
of nilpotency//J. Algebra Comput. 17(5-6) (2007)1273-1281.

20. B. Plotkin , G. Zhitomirski. On automorphisms of categories of universal algebras. //J.
Algebra Comput. 17(5-6) (2007)1115-1132

Automorphic Equivalence in the Classical Varieties of
Linear Algebras.
A.Tsurkov
Institute of Mathematics and

[9] A. Tsurkov, Automorphic equivalence of linear algebras,
http://arxiv.org/abs/1106.4853. Accepted in the Journal of Algebra
and Its Applications.

] G. Mashevitzky, B.M. Schein and G.I. Zhitomirski, Automorphisms of the semigroup
of endomorphisms of free inverse semigroups, Comm. Algebra 34(10) (2006) 3569{
3584.

G. Mashevitzky and B.M. Schein, Automorphisms of the endomorphism semigroup of
a free monoid or a free semigroup, Proc. Amer. Math. Soc. 131(6) (2003) 1655{1660.

Automorphisms of the endomorphism semigroup of a free algebra
Xiaosong Sun
School of Mathematics, Jilin University, Changchun 130012, China, International Journal of Algebra and Computation

Let Fn be one of the following algebras over k: a free non-
commutative non-associative algebra, a free commutative non-associative algebra,
a free anti-commutative non-associative algebra, a free Lie algebra, a free col-
or Lie superalgebra, a free Lie p-algebra and a free color Lie p-superalgebra. If
ƒÓ ¸ Aut End Fn, then ƒÓ is quasi-inner.
Remark 4.10. Corollary 4.9 recovers and unites some results in [3, 9, 12], and the
conclusions for free color Lie superalgebras, free Lie p-algebras and free color Lie
p-superalgebras are new.

[3] A. Berzins, The group of automorphisms of the semigroup of endomorphisms of
free commutative and free associative algebras, Internat. J. Algebra Comput. 17(5-6)
(2007) 941{949.

[9] R. Lipyanski and B. Plotkin, Automorphisms of categories of free modules and free
Lie algebras, arXiv:math.RA/0502212, [math.GM], 2005.
[11] G. Mashevitzky, B. Plotkin and E. Plotkin, Automorphisms of the category of free
algebras of varieties, Electron. Res. Announc. Amer. Math. Soc. 8 (2002) 1{10.

Proposition 3.7. Let F(X) be the free commutative monoid with a set X, |X| > 1,
of free generators. Every automorphism of End(F(X)) is inner.
Remark 3.8. Similarly, automorphisms of a free commutative semigroup are inner.

\bibitem[MPP]{MPP}
G.~Mashevitzky, B.~Plotkin, E.~Plotkin, Automorphisms of the category of free Lie algebras,
{\it J. Algebra}, {\bf 282:2} (2004) 490--512.
\bibitem[MPP1]{MPP1}
G.~Mashevitzky, B.~Plotkin, E.~Plotkin, Automorphisms of categories of free algebras of
varieties, {\it Electronic Research Announcements of AMS}, {\bf 8} (2002) 1--10.
\bibitem[MS]{MS}
G.~Mashevitzky, B.~Schein, Automorphisms of the endomorphism semigroup of a free monoid or
a free semigroup, {\it Proc. Amer. Math. Soc.}, {\bf 131} (2003) 1655--1660.

] B. Plotkin, Varieties of algebras and algebraic varieties. Categories of
algebraic varieties. Siberian Advanced Mathematics, Allerton Press, 7:2,
(1997), pp. 64 — 97.
[4] B. Plotkin, Some notions of algebraic geometry in universal algebra, Algebra
and Analysis, 9:4 (1997), pp. 224 — 248, St. Petersburg Math. J., 9:4,
(1998), pp. 859 — 879.

B. Plotkin, G. Zhitomirski, On automorphisms of categories of free algebras
of some varieties, Journal of Algebra, 306:2, (2006), pp. 344 — 367.

A. Tsurkov, Automorphic equivalence of algebras. International Journal of
Algebra and Computation. 17:5/6, (2007), pp. 1263—1271.

Y. Katsov, R. Lipyanski and B. Plotkin, Automorphisms of categories of free modules,
free semimodules and free Lie modules, Comm. Algebra 35(3) (2007) 931{952.
[8] R. Lipyanski, Automorphisms of the endomorphism semigroups of free linear algebras
of homogeneous varieties, Linear Algebra Appl. 429(1) (2008) 156{180.
[9] R. Lipyanski and B. Plotkin, Automorphisms of categories of free modules and free
Lie algebras, arXiv:math.RA/0502212, [math.GM], 2005.

A. Belov-Kanel, A. Berzins and R. Lipyanski, Automorphisms of the endomorphism
semigroup of a free associative algebra, Internat. J. Algebra Comput. 17(5-6) (2007)
923{939.
[2] A. Belov-Kanel and R. Lipyanski, Automorphisms of the endomorphism semigroup
of a polynomial algebra, J. Algebra 333(1) (2011) 40{54.
[3] A. Berzins, The group of automorphisms of the semigroup of endomorphisms of
free commutative and free associative algebras, Internat. J. Algebra Comput. 17(5-6)
(2007) 941{949.


Uslovija kommutativnosti diagramm est' uslovija izomorphizma dvuh opredelennyh funktorov. Raspishem eti uslovija.  V etih uslovijah uchastvuet avtomorfizm $\varphi$ kategorii $\Theta^0$ i avtomorfizm $\varphi$ kategorii $\tilde\Phi$. Pri nekotoryh uslovijah soglasovannosti avtomorfizma $\varphi$ s zamknutymi naborami formul iz takoj kommutativnosti diagramm vyvodjatsja uslovija geometricheskogo podobija i logicheskogo podobija algebr. V chastnosti, dlja sluchaja geometricheskogo podobija imeetsja vazhnaja teorema ob avtomorfnoj ekvivalentnosti algebr. Bolee prostoj sluchaj - kogda  avtomorfism $\varphi$ javljaetsja edinichnym avtomorfizmomk. V etom sluchae my prihodim k idejam obobschennoj geometricheskoj ekvivalentnosti algebr i obobschennoj logicheskoj ekvivalentnosti algebr. Eti obobschennye ekvivalentnosti predstavleny sledujuschimi diagrammami.


3 problemy dlja mnogoobrazija grupp

Problem 1.
Svjaz' mezhdu ponjatijami elementarnoj ekvivalentnosti dvuh svobodnyh grupp i logicheskim podobiem dvuh svobodnyh grupp

Problem 2
Izvestno, chto esli $W(X)$ - svobodnaja gruppa i $H$ - drugaja gruppa, LG-ekvivalentnaja $W(X)$, to eti gruppy izomorfny. Vopros sostoit v tom, chto izvestno, esli eta vtoraja gruppa $H$ obobschenno ekvivalentna svobodnoj gruppe.

Problem 3
Chto mozhno skazat' o gruppe, logicheski podobnoj svobodnoj gruppe.

Problem 4
Izvestny kriterii pri kotoryh dve algebry LG-ekvivalentny. Postroit' kriterii, kogda dve algebry obobschenno LG-ekvivalentny.

Problem 5
Ukazat' primery dvuh grupp $H_1$ i $H_2$, logicheski podobnyh i takih, chto funktory $Cl_{H_1}$ i $Cl_{H_2}\varphi$ ne izomorfny ni pri kakom avtomorfizme $\varphi$.